\documentclass{scrartcl}

\KOMAoptions{
    fontsize = 11pt,
    DIV = 10,
    abstract = true,
}

\setkomafont{title}{\normalfont\bfseries}
\setkomafont{author}{\large}
\setkomafont{date}{\small}
\setkomafont{sectioning}{\bfseries}
\addtokomafont{section}{\large}
\addtokomafont{subsection}{\normalsize}

\usepackage[
    affil-it,
    ]{authblk}

\usepackage[
    backend=biber,
    bibstyle=numeric,
    citestyle=numeric-comp,
    sorting=nyt,
    maxbibnames=99,
    maxcitenames=2,
    giveninits=true,
    ]{biblatex}
\renewbibmacro{in:}{}
\usepackage[T1]{fontenc}
\usepackage[english]{babel}
\usepackage{csquotes}
\usepackage{microtype}

\usepackage{amsmath}
\usepackage{amssymb}
\usepackage{amsthm}
\usepackage{mathtools}
\usepackage{thmtools, thm-restate}

\usepackage[dvipsnames]{xcolor}

\usepackage{hyperref}
\usepackage[capitalise,noabbrev]{cleveref}
\hypersetup{%
    hidelinks=true,
	colorlinks=true,
	linkcolor={BrickRed},
    urlcolor={MidnightBlue},
    citecolor={ForestGreen},
}

\theoremstyle{plain}
\newtheorem{prelem}{Proposition}

\crefname{prelem}{Proposition}{Propositions}
\newtheorem{theorem}{Theorem}
\newtheorem{proposition}[theorem]{Proposition}
\newtheorem{lemma}[theorem]{Lemma}
\newtheorem{corollary}[theorem]{Corollary}

\theoremstyle{definition}

\newtheorem{problem}{Problem}
\crefname{problem}{Problem}{Problems}
\theoremstyle{remark}

\crefname{claim}{Claim}{Claims}

\usepackage{lineno}
\usepackage{todonotes}

\usepackage{enumerate}

\usepackage{tikz}
\usetikzlibrary{arrows,positioning,intersections,calc}
\usetikzlibrary{backgrounds}

\tikzstyle{hvertex}=[thick,circle,inner sep=0.cm, minimum size=1.5mm, fill=white, draw=black]
\tikzstyle{hedge}=[thick]
\colorlet{hellgrau}{black!30!white}
\colorlet{dunkelgrau}{black!70!white}

\usepackage{subcaption}
\usepackage{float}

\usepackage{multirow}
\usepackage{booktabs}
\usepackage{mleftright}

\newcommand{\lmult}{\ensuremath{\{\!\!\{}}
\newcommand{\rmult}{\ensuremath{\}\!\!\}}}

\usepackage{bbold}
\newcommand{\zero}{\ensuremath{\mathbb{0}}}
\newcommand{\one}{\ensuremath{\mathbb{1}}}
\newcommand{\onep}{\ensuremath{\mathbb{1}^\prime}}

\newcommand{\NP}{\ensuremath{\mathcal{NP}}}

\newenvironment{case}[2]{\smallskip\noindent\textbf{Case #1:} #2 \smallskip}

\begin{document}


\title{\Large Signed Total Roman Domination and Domatic Numbers: Degree Three and Complete Multipartite Graphs}

\author[1]{Arie M.C.A. Koster}
\author[2]{Lutz Volkmann}
\author[1]{Moritz Wehrmann}

\affil[1]{Teaching and Research Area Discrete Optimization,
        RWTH Aachen University,
        52056 Aachen, Germany.
        Email: {\normalfont\ttfamily koster@math2.rwth-aachen.de}, {\normalfont\ttfamily wehrmann@math2.rwth-aachen.de}}

\affil[2]{Institute for Geometry and Practical Mathematics,
        RWTH Aachen University,
        52056 Aachen, Germany.
        Email: {\normalfont\ttfamily volkm@math2.rwth-aachen.de}}

\date{\vspace*{-1em}\today}

\maketitle

\vspace*{-3em}
\begin{abstract}
    Signed total Roman domination is a variant of the classic Roman domination-problem in graphs.
    A \emph{signed total Roman dominating function (STRD function)} on a graph $G=(V,E)$ is a function $f: V \to \{-1,1,2\}$ such that (i) $\sum_{u \in N(v)} f(u) \geq 1$ for all $v \in V$, where $N(v)$ denotes the neighborhood of $v$, and (ii) every vertex $v$ with $f(v) = -1$ is adjacent to a vertex $u$ with $f(u) = 2$.
    The \emph{weight} of $f$ is $\sum_{v \in V} f(v)$.
    The \emph{signed total Roman domination number} of $G$ is the minimum weight among all its STRD functions.
    A \emph{signed total Roman dominating family (STRD family)} on $G$ is a family $\{f_1, \ldots, f_d\}$ of pairwise distinct STRD functions such that $\sum_{i=1}^{d} f_i(v) \leq 1$ for all $v \in V$.
    The \emph{signed total Roman domatic number} of $G$ is the maximum size among all its STRD families.

    In this paper, we relate the signed total Roman domination number of a cubic graph to its open packing number, $2$-tuple total domination number, and signed total domination number, allowing us to derive sharp bounds on the first invariant and to establish new $\mathcal{NP}$-completeness results for all four invariants.
    We demonstrate that having a degree-3 vertex determines a graph's signed total Roman domatic number.
    Combined with known results this implies that the associated decision problem is easy for graphs with maximum degree at most three and $\mathcal{NP}$-complete otherwise.
    To contrast these general hardness results, we determine signed total Roman domination and domatic numbers in complete multipartite graphs.
    Despite their simple structure, this is a non-trivial task.
    \bigskip\\
    \textbf{Keywords:} Signed total Roman domination; Signed total Roman domatic number; Open packing; $k$-tuple total domination; Signed total domination\\
    \textbf{MSC2020:} 05C69
\end{abstract}

\section{Introduction} \label{section: intro}

Let $G = (V(G),E(G))$ be a graph of \emph{order} $n(G) = |V(G)|$ and \emph{size} $m(G) = |E(G)|$.
For a vertex $v \in V(G)$ its \emph{neighborhood} is the set $N_{G}(v) = \{u : uv \in E(G)\}$ while its \emph{degree} is $d_{G}(G) = |N_{G}(v)|$; if the graph is clear from the context we simply write $N(v)$ and $d(v)$.
If $d(v) = r$ for all $v \in V(G)$ then $G$ is \emph{$r$-regular}; a $3$-regular graph is \emph{cubic}. 
The minimum and maximum degree among all vertices of $G$ are denoted $\delta(G)$ and $\Delta(G)$, respectively.
$G$ is \emph{isolate-free} if $\delta(G) \geq 1$.

A \emph{signed total Roman dominating function (STRD function)} on $G$ is a function $f: V(G) \to \{-1,1,2\}$ such that (i) $\sum_{u \in N(v)} f(u) \geq 1$ for each $v \in V(G)$ and (ii) each vertex $v$ with $f(v) = -1$ is adjacent to a vertex $u$ with $f(u) = 2$.
We write $V_{i} = \{v \in V : f(v) = i\}$ for $i \in \{-1,1,2\}$ and identify $f$ with the triple $(V_{-1}, V_{1}, V_{2})$.
An STRD function $f$ has \emph{weight} $\omega(f) = \sum_{v \in V} f(v) = 2|V_{2}| + |V_{1}| - |V_{-1}|$.
The \emph{signed total Roman domination number} $\gamma_{stR}(G)$ is the minimum weight among all STRD functions on $G$ and an STRD function with weight $\gamma_{stR}(G)$ is a \emph{$\gamma_{stR}(G)$-function}.

A \emph{signed total Roman dominating family (STRD family)} on $G$ is a family $\{f_1, \ldots, f_d\}$ of pairwise distinct STRD functions on $G$ such that $\sum_{i=1}^{d} f_i(v) \leq 1$ for all $v \in V$.
The \emph{signed total Roman domatic number} $d_{stR}(G)$ is the maximum size among all STRD families on $G$.
If $\delta(G) \geq 1$, then the function $f$ with $f(v) = 1$ for all $v \in V(G)$ is an STRD function and $\{f\}$ is an STRD family on $G$;
in this case $\gamma_{stR}(G)$ and $d_{stR}(G)$ are well-defined and $d_{stR}(G) \geq 1$. 

The signed total Roman domination and domatic number of graphs were introduced by \textcite{Volkmann16a,Volkmann16b}.
They are members of an extensively studied family of variations of the classic Roman domination problem \cite{CockayneDHH04}.
Other examples can be found in \cite{AbdollahzadehAhangarCS19,AbdollahzadehAhangarHSY16,SheikholeslamiV10,BeelerHH16,ChellaliHHM16,AbdollahzadehAhangarHLZS14} and for more information we refer to the recent surveys \cite{ChellaliJSV21,ChellaliJSV20,ChellaliJSV22,ChellaliJSV25}.

We continue the study of signed total Roman domination in graphs.
For cubic graphs $G$ we relate $\gamma_{stR}(G)$ to the open packing number $\rho^{o}(G)$, the $2$-tuple total domination number $\gamma_{2\times, t}(G)$, and the signed total domination number $\gamma_{st}(G)$ (see the next section for their definitions); in particular we establish the Gallai-type result $\gamma_{stR}(G) + \rho^{o}(G) = n(G)$.
Using these new relations we derive sharp bounds on $\gamma_{stR}(G)$ for cubic graphs $G$ and prove \NP-completeness of the decision problems associated with $\gamma_{stR}(G)$, $\rho^{o}(G)$, $\gamma_{2\times, t}(G)$, and $\gamma_{st}(G)$ for cubic planar and cubic bipartite graphs.
While all four problems are known to be hard for various graphs classes (including, for example, bipartite graphs and chordal graphs \cite{KosariRSAK21,HenningS99,daFonsecaRamosdSS14,Pradhan12,Henning04}), our complexity results for cubic graphs appear to be new.
For general graphs we show that the presence of a degree-$3$ vertex implies signed total Roman domatic number $1$ and characterize the complexity of deciding $d_{stR}(G)$ in terms of maximum degree: The problem is easy if $\Delta(G) \leq 3$ and \NP-complete if $\Delta(G) \geq 4$.
Given the general hardness of computing the signed total Roman domination or domatic number, it is of great interest to find graph classes where these invariants can be determined efficiently.
Complete graphs and complete bipartite graphs were treated in \cite{Volkmann16a,Volkmann16b,GuoVW25,ZhaoM17} and in this work, we study the class of complete $r$-partite graphs with $r \geq 3$.
For every graph $G$ in this class we determine $\gamma_{stR}(G)$, showing in particular that $\gamma_{stR}(G) \in \{2,3\}$; in the cases where $\gamma_{stR}(G) = 3$ we determine $d_{stR}(G)$ as well.

\section{Terminology and Helpful Results}

Generally, we follow the ``domination books'' \cite{HaynesHH20,HaynesHH21,HaynesHH23} for notation and terminology.
In the following let $G$ be a graph and $S \subseteq V(G)$.

$S$ is an \emph{open packing} of $G$ if $|N(v) \cap S| \leq 1$ for all $v \in V(G)$, that is, no vertex has two neighbors in $S$.
The maximum size among all open packings of $G$ is its \emph{open packing number $\rho^{o}(G)$}.
$S$ is a \emph{$2$-tuple total dominating set} of $G$ if $|N(v) \cap S| \geq 2$ for all $v \in V(G)$, that is, each vertex has two neighbors in $S$.
The minimum size among all $2$-tuple total dominating sets of $G$ is its \emph{$2$-tuple total domination number $\gamma_{2\times, t}(G)$}; it is well-defined if $\delta(G) \geq 2$.
For a function $f: V(G) \to \mathbb{R}$ we let $f(S) = \sum_{v \in S} f(v)$ and denote the \emph{weight} of $f$ with $\omega(f) = f(V(G))$.
A function $f : V(G) \to \{-1,1\}$ is a \emph{signed total dominating function} on $G$ if $f(N(v)) \geq 1$ for all $v \in V(G)$.
The minimum weight $\omega(f)$ among all signed total dominating functions $f$ on $G$ is its \emph{signed total domination number $\gamma_{st}(G)$}; it is well-defined if $\delta(G) \geq 1$.

The \emph{induced subgraph} $G[S]$ has vertex set $S$ and contains all edges from $G$ that have both ends in $S$. 
We write $P_{n}$ and $C_{n}$ for the path and cycle on $n$ vertices, respectively.
With $K_n$ we denote the complete graph of order $n$ and with $K_{p_1, \ldots, p_r}$ we denote the complete $r$-partite graph with partite sets of sizes $p_1, \ldots, p_r$; we call $K_{p_1,p_2}$ a complete bipartite graph.

For a positive integer $k$ we use the abbreviations $[k] = \{1,2,\ldots,k\}$ and $[k]_0 = [k] \cup \{0\}$.
Multisets are written in double braces $\lmult \cdot \rmult$.
To indicate multiplicity in multisets or tuples we write $k \times e$ for $k$ copies of the element $e$ (in tuples these copies appear consecutively); 
for example, $\lmult 2 \times 3, 4, 3\rmult = \lmult 3, 3, 3, 4\rmult$ and $(2 \times 3, 4, 3) = (3,3,4,3)$.
We adopt this notation for complete multipartite graphs;
for example, $K_{n \times 1} = K_n$, while $K_{n \times 2}$ is the cocktail party graph, and $K_{p,q\times 1}$ is a complete split graph whose vertices can be partitioned into an independent set of size $p$ and a clique of size $q$ such that every member of the independent set is adjacent to every member of the clique.

For the transpose of a matrix $M$ we write $M^T$ and if all entries of $M$ are real numbers, the sum over the entries in the $i$th row (respectively, $j$th column) is denoted with $\vert M \vert_{i}^{R}$ (respectively, $\vert M \vert_{j}^{C}$).

We will frequently use the following fundamental results.

\begin{prelem}[\cite{Volkmann16b}]
    \label{prop: str domatic number leq mindegree}
    If $G$ is a graph with $\delta(G) \geq 1$, then $d_{stR}(G) \leq \delta(G)$.
    Moreover, if $d_{stR}(G) = \delta(G)$, then for each STRD family $\{f_1, \ldots, f_d\}$ with $d = d_{stR}(G)$ and each vertex $v$ of minimum degree, $\sum_{u \in N(v)} f_i(u) = 1$ for all $i \in [d]$ and $\sum_{i=1}^{d} f_i(u) = 1$ for all $u \in N(v)$.
\end{prelem}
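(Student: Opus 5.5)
Fix a vertex $v$ of minimum degree $\delta = \delta(G)$ and an STRD family $\{f_1, \ldots, f_d\}$. The plan is a double-counting argument over the neighborhood $N(v)$, comparing the total weight in two orders of summation.

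First, condition (i) applied at $v$ gives $\sum_{u \in N(v)} f_i(u) \geq 1$ for each $i \in [d]$. Summing over $i$ yields
\begin{equation*}
d \leq \sum_{i=1}^{d} \sum_{u \in N(v)} f_i(u).
\end{equation*}

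Second, interchanging the order of summation and using the family condition $\sum_{i=1}^{d} f_i(u) \leq 1$ for every vertex $u$ gives
\begin{equation*}
\sum_{i=1}^{d} \sum_{u \in N(v)} f_i(u) = \sum_{u \in N(v)} \sum_{i=1}^{d} f_i(u) \leq |N(v)| = \delta.
\end{equation*}
Combining the two inequalities gives $d \leq \delta$. Taking $d = d_{stR}(G)$ proves the first claim; well-definedness is already guaranteed by $\delta \geq 1$.

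For the equality part, assume $d = d_{stR}(G) = \delta(G)$, and take any maximum STRD family and any vertex $v$ of minimum degree. The chain above then reads $\delta \leq \cdots \leq \delta$, so both inequalities hold with equality. In the first inequality each summand satisfies $\sum_{u \in N(v)} f_i(u) \geq 1$, so equality of the sum forces $\sum_{u \in N(v)} f_i(u) = 1$ for every $i$. Likewise, in the second inequality each summand satisfies $\sum_{i=1}^{d} f_i(u) \leq 1$, so equality forces $\sum_{i=1}^{d} f_i(u) = 1$ for every $u \in N(v)$.

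The argument has no real obstacle. The only point needing care is the equality case: the tightness conclusion must be drawn term by term, which is valid because each individual term is bounded on the correct side.
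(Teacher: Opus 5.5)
Your proposal is correct and matches the standard argument. The paper does not prove this proposition itself but quotes it from Volkmann, and the argument there is this same double counting of $\sum_{i}\sum_{u \in N(v)} f_i(u)$ over a minimum-degree vertex $v$, with the equality case read off term by term from the two inequalities exactly as you do.
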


\begin{prelem}[\cite{Volkmann16b}]
    \label{prop: str domination number times str domatic number leq order}
    If $G$ is a graph of order $n$ with $\delta(G) \geq 1$, then $\gamma_{stR}(G) \cdot d_{stR}(G) \leq n$.
    Moreover, if $\gamma_{stR} \cdot d_{stR}(G) = n$, then for each STRD family $\{f_1, \ldots, f_d\}$ with $d = d_{stR}(G)$, each $f_i$ is a $\gamma_{stR}(G)$-function and $\sum_{i=1}^{d} f_i(v) = 1$ for all $v \in V(G)$.
\end{prelem}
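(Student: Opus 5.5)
The statement to prove is the second cited fact: $\gamma_{stR}(G)\cdot d_{stR}(G)\le n$, together with the characterization of the equality case. The plan is a double-counting argument over an optimal STRD family.

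First I fix an STRD family $\{f_1,\ldots,f_d\}$ with $d=d_{stR}(G)$. This exists because $\delta(G)\ge 1$ makes the all-ones function an STRD function. I then evaluate the double sum $\sum_{i=1}^{d}\sum_{v\in V(G)} f_i(v)$ in two orders.

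Summing over $i$ first gives
\[
\sum_{i=1}^{d}\omega(f_i)\;\ge\; d\cdot\gamma_{stR}(G),
\]
since each $f_i$ is an STRD function and so has weight at least $\gamma_{stR}(G)$. Summing over $v$ first and using the family condition $\sum_{i=1}^{d} f_i(v)\le 1$ for every vertex gives
\[
\sum_{v\in V(G)}\sum_{i=1}^{d} f_i(v)\;\le\; n.
\]
Comparing the two estimates yields $d\cdot\gamma_{stR}(G)\le n$, which is the first claim.

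For the equality part, suppose $\gamma_{stR}(G)\cdot d_{stR}(G)=n$. Then both inequalities in the chain must be tight. Each one is a sum of termwise inequalities, so every individual term must be tight:
\begin{itemize}
\item tightness of the first gives $\omega(f_i)=\gamma_{stR}(G)$ for each $i$, so each $f_i$ is a $\gamma_{stR}(G)$-function;
\item tightness of the second gives $\sum_{i=1}^{d} f_i(v)=1$ for every vertex $v$.
\end{itemize}
The only point needing care is that this holds for every maximum STRD family, not just one particular family. That is immediate, because the family was arbitrary among those of size $d_{stR}(G)$.

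I expect no step to be a real obstacle. The only thing to watch is that the lower bound uses minimality of $\gamma_{stR}$ for each individual $f_i$, and the upper bound uses the family constraint vertex by vertex.
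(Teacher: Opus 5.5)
Your double-counting argument is correct: you bound $\sum_{i}\omega(f_i)$ below by $d\,\gamma_{stR}(G)$ and above by $n$ via $\sum_i f_i(v)\le 1$, and in the equality case you read off termwise tightness. The paper does not reprove this result and relies on \cite{Volkmann16b}; your argument is the standard proof of it.
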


Concerning the signed total Roman domination and domatic numbers of complete graphs and complete bipartite graphs the following is known: 

\begin{prelem}[\cite{Volkmann16a}]
    \label{prop: str domination number complete graph}
    If $n$ is an integer with $n \geq 3$, then $\gamma_{stR}(K_n) = 3$.
\end{prelem}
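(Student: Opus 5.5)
We prove $\gamma_{stR}(K_n)=3$ for $n\ge 3$ by establishing a lower bound of $3$ on every STRD function $f$ on $K_n$ and then giving a construction of weight exactly $3$.

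\textbf{Lower bound.} The key observation is that in $K_n$ the open neighborhood of $v$ is $V\setminus\{v\}$. Hence condition (i) reads $\omega(f)-f(v)\ge 1$, that is, $\omega(f)\ge 1+f(v)$ for every vertex $v$. We split into two cases.
\begin{itemize}
\item If some vertex has value $2$, taking $v$ to be that vertex gives $\omega(f)\ge 3$ at once.
\item If no vertex has value $2$, condition (ii) forces $V_{-1}=\emptyset$. Then $f\equiv 1$ and $\omega(f)=n\ge 3$.
\end{itemize}
In either case $\omega(f)\ge 3$.

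\textbf{Upper bound.} We need an STRD function of weight exactly $3$. If $n=3$, the constant function $1$ has weight $3$. For $n\ge 4$ we use a parity split; the task is to choose the values so that the maximum value among the vertices does not exceed $\omega(f)-1=2$.
\begin{itemize}
\item For $n$ even, write $n=2k+2$ with $k\ge 1$. Assign the value $2$ to one vertex, $1$ to $k+1$ vertices and $-1$ to $k$ vertices. The weight is $2+(k+1)-k=3$.
\item For $n$ odd, write $n=2k+1$ with $k\ge 2$. Assign $2$ to two vertices, $1$ to $k-2$ vertices and $-1$ to $k+1$ vertices. The weight is $4+(k-2)-(k+1)=1$, which is too small, so this assignment must be adjusted. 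A direct alternative is $a$ twos, $b$ ones and $c$ minus-ones with $2a+b-c=3$ and $a+b+c=n$. Taking $a=1$ gives $b-c=1$, so $b+c$ is odd and $n=b+c+1$ is even, which fails for odd $n$. Taking $a=2$ gives $b-c=-1$ and $n=2+b+c$ odd, as required. So use two vertices of value $2$, $(n-3)/2$ vertices of value $1$ and $(n-1)/2$ vertices of value $-1$. The weight is $4+(n-3)/2-(n-1)/2=3$.
\end{itemize}

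\textbf{Verification.} In both constructions $\omega(f)=3$ and $f(v)\le 2$, so $\omega(f)-f(v)\ge 1$ for every $v$ and condition (i) holds. Every vertex with value $-1$ is adjacent to every other vertex, in particular to a vertex of value $2$, so condition (ii) holds. For small cases the counts are valid: $n=5$ gives $b=1$, $c=2$, and $n=4$ gives one $2$, two $1$'s and one $-1$.

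There is no real obstacle; the only point needing care is choosing the parity-dependent number of $2$'s.
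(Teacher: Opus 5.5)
Your proof is correct and complete. The paper does not prove this statement itself: it is quoted from \cite{Volkmann16a} and used only in Case~1 of the proof of \cref{thm: str domination number complete multipartite graphs}, to dispose of $p_1=1$. The relevant comparison is therefore with the paper's treatment of $K_{p,q\times 1}$ for $p\ge 2$.

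\textbf{Lower bound.} There the paper uses \cref{lemma: lower bound str domination number complete multpartite graph} to get $f(P_i)\le 1$ for every partite set when $\omega(f)=2$, and then observes that a $-1$ would need a $2$ in some singleton part, which is impossible. Your identity $\omega(f)=f(N(v))+f(v)\ge 1+f(v)$ does the same job more directly. It yields $\omega(f)\ge 3$ in one step, without first proving $\gamma_{stR}\ge 2$ and then excluding weight $2$.

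\textbf{Construction.} Your even-$n$ function is exactly the paper's Case~1 assignment $(1,2,(s-1)\times 1,(s-1)\times(-1))$ restricted to singleton parts. The paper's odd-$r$ assignment $(0,2,s\times 1,(s-1)\times(-1))$, however, requires $f(P_1)=0$. That is impossible on a single vertex: the entry for $(p,w)=(1,0)$ in \cref{table: weight assignment partition sets} is empty. This is why the paper has to fall back on the citation for $K_n$, and your function with two $2$'s, $(n-3)/2$ ones and $(n-1)/2$ minus ones fills exactly that gap.

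In a final write-up, delete the false start with $k-2$ ones and $k+1$ minus ones, which has weight $1$, and state the correct odd-case assignment directly.
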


\begin{prelem}[\cite{ZhaoM17}]
    \label{prop: str domination number complete bipartite graph}
    If $p$ and $q$ are integers with $p \geq q \geq 1$, then $\gamma_{stR}(K_{p,q}) = 2$, except for the case $(p,q) = (3,3)$, where $\gamma_{stR}(K_{p,q}) = 4$, and the case $(p,q) \in \{(s, 1) : s \geq 2\} \cup \{(s,3) : s \geq 4\} \cup \{(3,2)\}$, where $\gamma_{stR}(K_{p,q}) = 3$.
\end{prelem}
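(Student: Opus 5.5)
Write $A$ and $B$ for the partite sets of $K_{p,q}$, with $|A| = p \geq q = |B| \geq 1$. The plan is to reduce everything to arithmetic on the two part-sums $f(A)$ and $f(B)$. For a function $f : V \to \{-1,1,2\}$, the neighborhood condition (i) says exactly $f(B) \geq 1$ (from any vertex of $A$) and $f(A) \geq 1$ (from any vertex of $B$). Condition (ii) says: if $A$ contains a $-1$ then $B$ contains a $2$, and symmetrically. Hence $\omega(f) = f(A) + f(B) \geq 2$, and $\gamma_{stR}(K_{p,q}) = 2$ holds iff both parts can be given sum exactly $1$ while respecting (ii).

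\textbf{Key lemma.} I would first classify when a part of size $s$ can receive sum exactly $1$. Let it have $a$ entries $2$, $b$ entries $1$ and $c$ entries $-1$.
If $c = 0$, then the sum is at least $s$, so sum $1$ forces $s = 1$ with the single value $1$.
If $c \geq 1$, the part needs a $2$ in the other part. If we also require $a \geq 1$, the equations $2a+b-c=1$ and $a+b+c=s$ give
\[
a = 2c+1-s, \qquad b = 2s-1-3c .
\]
So a solution with $a \geq 1$ and $b \geq 0$ exists iff some integer $c$ lies in $[s/2, (2s-1)/3]$. Taking $c = s/2$ for even $s$, or $c = (s+1)/2$ for odd $s \geq 5$, shows this holds exactly for $s = 2$ and $s \geq 4$. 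It fails for $s = 3$, and trivially for $s = 1$.

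\textbf{Weight $2$.} If $p = q = 1$, take all values $1$. If $q = 1 < p$, then sum $1$ on $A$ needs a $-1$, hence $f(B) = 2$. If $p, q \geq 2$, both parts need a $-1$ and hence both need a $2$. By the lemma this works iff neither $p$ nor $q$ equals $3$. Thus weight $2$ is attained outside the listed exceptions, and in every exceptional case $\omega(f) \geq 3$.

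\textbf{Exceptions.} For $(3,3)$: a size-$3$ part with sum $1$ must contain a $-1$ and hence, since the other part also has size $3$, it must also avoid nothing — the other part then needs a $2$, and so on symmetrically. The lemma rules out a size-$3$ part with sum $1$ containing both a $2$ and a $-1$. So both sums are at least $2$, giving $\omega \geq 4$. Assigning $(2,1,-1)$ to both parts attains $4$.

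For the remaining exceptions we construct weight $3$:
\begin{itemize}
\item $(s,1)$ with $s \geq 2$: set $f(B) = 2$. Give $A$ sum $1$ using only $\pm 1$ values when $s$ is odd, and using $(2,-1)$ padded with pairs $(1,-1)$ when $s$ is even. No $2$ is needed in $A$, since $B$ has no $-1$.
\item $(s,3)$ with $s \geq 4$: give $B$ the values $(2,1,-1)$, so $f(B) = 2$, and give $A$ a sum-$1$ pattern containing both a $2$ and a $-1$, which the lemma provides.
\item $(3,2)$: give $B$ the values $(2,-1)$ and $A$ the values $(2,1,-1)$.
\end{itemize}

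The only delicate step is the lemma's case analysis, specifically showing that $s = 3$ is the unique size $\geq 2$ that fails; the rest is routine.
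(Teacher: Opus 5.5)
Your reduction to the two part sums is correct, and so is your classification lemma: a part of size $s$ can have sum $1$ while containing both a $2$ and a $-1$ if and only if $s=2$ or $s\ge 4$. The characterization of when weight $2$ is attainable, and all of the weight-$3$ constructions for $(s,1)$, $(s,3)$ and $(3,2)$, also check out. The paper gives no proof of this proposition; it simply cites Zhao and M. Your reasoning matches how the paper treats complete multipartite graphs: a lower-bound lemma on part sums, plus the observation that a $3$-element part containing both a $2$ and a $-1$ cannot have sum $1$.

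There is, however, a gap in the one case where the answer is $4$, the lower bound for $K_{3,3}$. You write that the lemma rules out a size-$3$ part with sum $1$ containing both a $2$ and a $-1$, ``so both sums are at least $2$.'' This does not follow, and as a statement about STRD functions it is false. Take $f=(1,1,-1)$ on $A$ and $f=(2,1,1)$ on $B$. This is an STRD function on $K_{3,3}$ with $f(A)=1$.

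The lemma only tells you that a sum-$1$ part of size $3$ must be $(1,1,-1)$, that is, it has a $-1$ and no $2$. That gives a contradiction only if the other part also contains a $-1$. Nothing you wrote forces this, and the garbled sentence ending ``and so on symmetrically'' does not supply it.

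The missing line is as follows. If $f(A)=1$, then $B$ must contain a $2$, to serve the $-1$ in $A$. It cannot contain a $-1$, since that would require a $2$ in $A$. Hence $f(B)\ge 2+1+1=4$ and $\omega(f)\ge 5$. So every STRD function of weight at most $4$ has both part sums at least $2$, which gives $\gamma_{stR}(K_{3,3})\ge 4$.

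Alternatively, you can exclude weight $3$ directly. Weight $3$ forces $\{f(A),f(B)\}=\{1,2\}$. The only size-$3$ pattern with sum $2$ is $(2,1,-1)$, whose $-1$ needs a $2$ in the sum-$1$ part. But that part is $(1,1,-1)$, a contradiction. With either fix, your proof is complete.
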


\begin{prelem}[\cite{GuoVW25}]
    \label{prop: str domatic number complete graph}
    If $n$ is an integer with $n \geq 3$, then $d_{stR}(K_n) = \lfloor n/3 \rfloor$, except for the case $n \in \{7,9,10,11\}$, where $d_{stR}(K_n) = \lfloor n/3 \rfloor - 1$.
\end{prelem}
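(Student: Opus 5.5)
The proof splits into an upper bound and matching constructions. For the upper bound I will use \cref{prop: str domination number complete graph} together with \cref{prop: str domination number times str domatic number leq order}: since $\gamma_{stR}(K_n)=3$, we get $d_{stR}(K_n)\le \lfloor n/3\rfloor$. The exceptional values then need sharper counting.

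\textbf{Structure of STRD functions on $K_n$.} For every $v$ we have $f(N(v)) = \omega(f)-f(v)$. Hence $f$ is an STRD function if and only if
\begin{itemize}
\item[(a)] $\omega(f)\ge 1+\max_v f(v)$, and
\item[(b)] $V_2\neq\emptyset$ whenever $V_{-1}\ne\emptyset$.
\end{itemize}
So either $f\equiv 1$, or $f$ takes the value $2$ somewhere and $\omega(f)\ge 3$. The all-ones function cannot belong to an STRD family of size $d\ge 2$: the remaining functions would then have nonpositive column sums while their total weight is at least $3(d-1)>0$.

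I will encode a family $\{f_1,\dots,f_d\}$ as a $d\times n$ matrix $M$ with entries in $\{-1,1,2\}$. Its constraints are:
\begin{itemize}
\item every row contains a $2$;
\item $|M|_i^R\ge 3$ for every row $i$;
\item $|M|_j^C\le 1$ for every column $j$;
\item the rows are pairwise distinct.
\end{itemize}

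\textbf{Parity argument for the exceptions.} For $d$ odd I will show $n\ge 4d$ is needed, at least when $d=3$. Take a column containing exactly one $2$; its other $d-1$ entries are $\pm1$, so its sum is even and hence at most $0$. A column with no $2$ has sum at most $1$. A column with two $2$'s has sum at least $4-(d-2)$, which exceeds $1$ when $d=3$. So for $d=3$ there are at least $3$ columns of sum at most $0$, and
\[
9\le \sum_j |M|_j^C \le n-3 .
\]
This gives $n\ge 12$, which rules out $d=3$ for $n\in\{9,10,11\}$.

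For $n=7$ and $d=2$, columns without a $2$ have sum at most $0$ (an even sum bounded by $1$). Columns with a $2$ have sum at most $1$, with equality only for the pattern $(2,-1)$ or $(-1,2)$. Let $a$ and $b$ count these two patterns. Then $a+b\ge 6$, at most one column remains, and the row inequalities $2a-b+x\ge 3$ and $2b-a+y\ge 3$ with $x+y\le 0$ become a short case check that I expect to yield a contradiction. This settles all four exceptional upper bounds $\lfloor n/3\rfloor-1$.

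\textbf{Constructions.} I need explicit matrices attaining the bounds. For $d=1$ (that is, $n\le 5$) the function $f\equiv1$ works.

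For the general case I plan to build the matrix from small blocks in which every column sum is $1$ or $0$ and every row sum is exactly $3$, placed side by side. Gluing blocks horizontally adds row sums, so I will instead use a block design. Each row contains one or two $2$'s, and each $2$ is compensated by a $-1$ in other rows of the same column. The distribution is chosen cyclically: row $i$ gets its $2$ in column $i$, with $-1$'s and $1$'s arranged so that the column constraints hold. Surplus columns (when $n>3d$) will be filled with $\pm1$ entries of nonpositive column sum, or zero-sum column patterns, adjusted according to the parity of $d$. The odd-$d$ parity loss forces extra slack, so a separate base construction will be needed for small odd $d$ (e.g.\ $n=12$ with $d=3$, and $n=8$ with $d=2$, via a $(2,-1)/(-1,2)$ pattern). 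The exceptional values $n=7,9,10,11$ need only a family of size $\lfloor n/3\rfloor-1$, which is easy.

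\textbf{Main obstacle.} I expect the hardest step to be a uniform construction for all non-exceptional $n$, especially odd $d=\lfloor n/3\rfloor$ where $n$ is close to $3d$. The parity obstruction shows that columns holding a single $2$ lose one unit there. So I will try columns carrying several $2$'s (possible once $d\ge 5$), and handle the remaining small cases by explicit matrices.
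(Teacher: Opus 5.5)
Your upper-bound half is sound. The bound $d_{stR}(K_n)\le\lfloor n/3\rfloor$ follows from \cref{prop: str domination number complete graph,prop: str domination number times str domatic number leq order}. Excluding $f\equiv 1$ from families of size at least $2$ correctly forces a $2$ into every row. Your parity count for $d=3$ does give $n\ge 12$, which settles $n\in\{9,10,11\}$. The $n=7$ case check you left open also closes: if $a+b=7$ the row inequalities give $a\ge 4$ and $a\le 3$, and if $a+b=6$ they force $x+y=0$ and then $3a=9-x\in\{8,10\}$.

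The genuine gap is the other half. The lower bound $d_{stR}(K_n)\ge\lfloor n/3\rfloor$ for all non-exceptional $n\ge 12$, which is most of the statement (quoted here from \cite{GuoVW25}), is never established; you only describe a plan, and parts of that plan cannot work as stated. Write $n=3d+r$ with $r\in\{0,1,2\}$. The row sums total at least $3d$ and the column sums at most $n$, so there are at most $r$ units of slack. If your $d\times 3d$ core has all row sums exactly $3$, then a single surplus column for $r=1$ is impossible: each of its entries must be nonnegative, i.e.\ in $\{1,2\}$, so its column sum is at least $d\ge 2$. Hence $n\equiv 1 \pmod 3$ needs a different design, not padding. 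Your example ``$n=12$ with $d=3$'' also targets the wrong value: $\lfloor 12/3\rfloor=4$, so a three-member family on $K_{12}$ proves nothing. Finally, ``$n\ge 4d$ for odd $d$'' is true only for $d=3$.

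The missing idea is to split the $d\times n$ matrix into three side-by-side blocks of width $d$ or $d+1$. Each block should have \emph{all row sums equal to $1$} and column sums at most $1$; this is the role of the blocks $\one$, $\onep$, $\zero$ recalled in \cref{subsection: matrices}.
\begin{itemize}
\item Square blocks are the circulants $\one_{d\times d}$ with first column $(2,(s-1)\times 1,s\times(-1))^T$ for $d=2s$, and $(2,2,(s-1)\times 1,(s+2)\times(-1))^T$ for odd $d=2s+3\ge 5$. Two $2$'s per column are exactly what your parity obstruction demands for odd $d$. These blocks put a $2$ in every row and have pairwise distinct rows.
\item For $r=0$, take $[\one_{d\times d}\;\one_{d\times d}\;\one_{d\times d}]$.
\item For $r=2$, append $\zero_{d\times 2}$ when $d$ is even. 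When $d$ is odd, append a column pair with $(d-1)/2$ rows $(1,-1)$, $(d-1)/2$ rows $(-1,1)$ and one row $(1,1)$.
\item For $r=1$ you need $d\times(d+1)$ blocks $\onep_{d\times(d+1)}$, with row sums $1$ and one column of sum $0$. These must be built explicitly, and they do not exist for $d=2$, which matches $K_7$.
\end{itemize}
Alternatively, an induction $n\to n+6$ reduces everything to finitely many base cases. It adds $\zero_{d\times 6}$ to the old rows and two new rows carrying $[\one_{2\times 2}\;\one_{2\times 2}\;\one_{2\times 2}]$ on the new columns. On the old columns those rows get a zero-sum pattern, adjusted for odd $n$ as in \cref{tab: induction STRD family on K 2 2 q}. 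Some such explicit construction is indispensable; without it the equality $d_{stR}(K_n)=\lfloor n/3\rfloor$ remains unproved.
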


\begin{prelem}[\cite{GuoVW25}]
    \label{prop: str domatic number complete bipartite graph}
    If $p$ and $q$ are integers with $p \geq q \geq 1$, then $d_{stR}(K_{p,q})=q$, except for the case $p=3$ or $q=3$, where $d_{stR}(K_{p,q})=1$, the case $(p,q)=(5,4)$, where $d_{stR}(K_{p,q})=2$, and the case $(p,q)=(7,6)$, where $d_{stR}(K_{p,q})=3$.
\end{prelem}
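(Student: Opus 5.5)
The plan is to prove the upper bound and the generic lower bound separately, and then to treat the exceptional pairs $(p,q)$ by a finer counting argument. Write $A$ ($|A|=p$) and $B$ ($|B|=q$) for the partite sets of $K_{p,q}$. For a function $f:V\to\{-1,1,2\}$, the STRD conditions reduce to four requirements:
\begin{itemize}
\item[(a)] $f(A)\ge 1$;
\item[(b)] $f(B)\ge 1$;
\item[(c)] if $f$ takes the value $-1$ on $A$, then it takes the value $2$ on $B$;
\item[(d)] if $f$ takes the value $-1$ on $B$, then it takes the value $2$ on $A$.
\end{itemize}
The upper bound $d_{stR}(K_{p,q})\le q=\delta$ is immediate from \cref{prop: str domatic number leq mindegree}. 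All remaining work consists of constructions of size $q$ and impossibility proofs for the exceptions.

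\textbf{Constructions.} I would build STRD families of size $q$ by induction. The base cases are small, for instance $q\in\{1,2\}$ with small $p$, written down by hand. The inductive steps enlarge $p$ or $q$ by attaching a small block of fresh vertices. A function on the larger graph is obtained by concatenating an existing function with a block pattern; the block patterns are chosen so that each fresh vertex has column sum at most $1$, and so that the contributions to $f(A)$ and $f(B)$ stay $\ge 1$. For example, a fresh pair in $A$ could receive values $(2,-1)$ in one function and $(-1,2)$ in another, with $(1,-1)$-type values elsewhere. Since (a) and (b) only involve the totals $f_i(A)$ and $f_i(B)$, it suffices to track the row sums of the $d\times(p+q)$ value matrix and the positions of the $2$'s. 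A convenient ansatz is a circulant pattern on $B$: function $f_i$ puts $2$ on $b_i$, $-1$ on $b_{i+1}$ and $1$ on the rest, with a matching shifted pattern on $A$. The inductive steps must avoid the exceptional values $p,q=3$, so I would handle the parameters near $3$, $(5,4)$ and $(7,6)$ with separate explicit families.

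\textbf{Impossibility for the exceptions.} I expect this to be the main obstacle. Suppose $d=q$. By \cref{prop: str domatic number leq mindegree}, applied at a vertex of $A$ (which has minimum degree $q$), every $f_i$ has $f_i(B)=1$ and every $b\in B$ has $\sum_i f_i(b)=1$. If $q=3$, the only way to write $1$ as a sum of three values in $\{-1,1,2\}$ is $1+1-1$. Hence each $f_i$ has a $-1$ on $B$ and, by (d), a $2$ on $A$. The column condition on $B$ then forces the $-1$'s on $B$ to be spread out, and summing the row and column constraints over $A$ yields a contradiction.

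For $d\ge 2$ in general (to show $d=1$ when $p=3$ or $q=3$), I would take two functions $f_1,f_2$ with $f_1+f_2\le 1$ pointwise and study the part with three vertices. The value pairs on each of its vertices lie in $\{(-1,-1),(-1,1),(1,-1),(-1,2),(2,-1)\}$. Each row must sum to at least $1$ on that part, and a $-1$ there requires a $2$ on the other side. A short case analysis on the number of $2$'s used should show that the two rows cannot both reach sum $\ge 1$ on a $3$-set while also satisfying the domination requirements (c) and (d) on the other side.

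For $(5,4)$ and $(7,6)$, I would argue that $d=q$ is impossible. Otherwise, by \cref{prop: str domatic number leq mindegree}, every column sum on $B$ is $1$ and every row sum on $B$ is $1$, and a double count of the $2$'s and $-1$'s in $A$ against the $p$ column bounds leaves too little slack. Finally, I would exhibit explicit families of size $2$ and $3$ for these two pairs to show the stated values are attained.
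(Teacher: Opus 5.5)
The paper does not prove this statement; it only quotes it from the literature. Your plan therefore has to stand on its own, and for the exceptional pairs it does not.

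\textbf{The exceptional pairs.} For $(5,4)$ and $(7,6)$ you rule out only $d=q$ and then exhibit families of size $2$ and $3$. That leaves $d=3$ open for $(5,4)$ and $d\in\{4,5\}$ open for $(7,6)$: excluding the top value $q$ says nothing about intermediate sizes.

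For $(5,4)$ the gap can be closed. The argument in the proof of \cref{prop: str domatic number when str domination number geq n/3} gives distinct $v_1,v_2,v_3$ with $\lmult f_i(v_j)\mid i\in[3]\rmult=\lmult 2,-1,-1\rmult$. By \cref{lemma: lower bound str domination number complete multpartite graph} this yields $6\le\sum_i\omega(f_i)\le n-3=6$. Equality forces every other column to be $\lmult -1,1,1\rmult$, so $v_i$ carries the only $2$ of $f_i$. Then $f_1(v_2)=f_1(v_3)=f_2(v_3)=-1$ makes $v_1v_2v_3$ a triangle, which is impossible in $K_{5,4}$.

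For $(7,6)$ the gap cannot be closed, because the claimed value is false. Let $A=\{a_1,\ldots,a_7\}$ and $B=\{b_1,\ldots,b_6\}$. Take five functions whose values on $A$ form $\one_{5\times 5}$ followed by the columns $(1,1,1,-1,-1)^T$ and $(-1,-1,-1,1,1)^T$, and whose values on $B$ form $\onep_{5\times 6}$:
\[
\begin{array}{c|rrrrrrr|rrrrrr}
 & a_1 & a_2 & a_3 & a_4 & a_5 & a_6 & a_7 & b_1 & b_2 & b_3 & b_4 & b_5 & b_6\\ \hline
f_1 & 2 & -1 & -1 & -1 & 2 & 1 & -1 & 2 & -1 & -1 & 1 & -1 & 1\\
f_2 & 2 & 2 & -1 & -1 & -1 & 1 & -1 & -1 & 2 & -1 & 1 & 1 & -1\\
f_3 & -1 & 2 & 2 & -1 & -1 & 1 & -1 & 2 & -1 & 1 & -1 & 1 & -1\\
f_4 & -1 & -1 & 2 & 2 & -1 & -1 & 1 & -1 & 2 & 1 & -1 & 1 & -1\\
f_5 & -1 & -1 & -1 & 2 & 2 & -1 & 1 & -1 & -1 & 1 & 1 & -1 & 2
\end{array}
\]
Each $f_i$ has $f_i(A)=f_i(B)=1$ and a $2$ on both sides, and every column sums to at most $1$. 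So this is an STRD family, and $d_{stR}(K_{7,6})\ge 5$.

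Now exclude $d=6$. By \cref{prop: str domatic number leq mindegree}, every row on $B$ is $\lmult -1,-1,-1,1,1,2\rmult$, so every row on $A$ needs a $2$. This contradicts the case $k=6$ of \cref{lemma: 4 times 5 and 6 times 7 matrix one prime does not exist}. Hence $d_{stR}(K_{7,6})=5$, not $3$, and your plan could never have established the stated value.

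\textbf{The generic case.} The rest of the plan is too thin to count as a proof. Your circulant ansatz ($2$ on $b_i$, $-1$ on $b_{i+1}$, $1$ elsewhere) gives every row and column on $B$ the sum $q-1$. This violates $\sum_i f_i(b)\le 1$ as soon as $q\ge 3$.

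For $d=q$, \cref{prop: str domatic number leq mindegree} forces all row and column sums on $B$ to equal $1$, so roughly half the entries are $-1$, as in $\one_{q\times q}$. Then, for $q\ge 2$, every row on $B$ contains a $-1$. What you actually need is a $q\times p$ matrix on $A$ with row sums $\ge 1$, column sums $\le 1$ and a $2$ in every row. Constructing these for all admissible $(p,q)$ is the substance of the generic case, and it is exactly what fails at $(5,4)$ and $(7,6)$. ``Attach small blocks by induction'' does not yet address it.

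\textbf{The case $p=3$ or $q=3$.} Your treatment can be completed, but \cref{thm: d str vertex of degree 3} gives it at once. The cleaner observation for $d=3$ is that no $f_i$ has a $2$ on the $3$-set. Hence no $f_i$ has a $-1$ on the other side, which forces column sums $\ge 3$ there.
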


\section{Signed Total Roman Domination and Degree-3 Vertices} \label{section: Signed Total Roman Domination and Vertex Degrees}

In this section we study the signed total Roman domination number in cubic graphs and the signed total Roman domatic number in graphs with at least one degree-3 vertex.

\begin{theorem}
    \label{thm: gallai type STR domination open packing}
    If $G$ is a cubic graph of order $n$, then $\gamma_{stR}(G) + \rho^{o}(G) = n$.
\end{theorem}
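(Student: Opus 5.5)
Both inequalities come from a correspondence between STRD functions on a cubic graph and open packings: the $-1$ vertices of an STRD function should form an open packing, and conversely.

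\emph{Structure of an STRD function on a cubic graph.} Each vertex $v$ has exactly three neighbours with values in $\{-1,1,2\}$. If $v$ had two neighbours in $V_{-1}$, the neighbourhood sum would be at most $-1-1+2=0<1$. Hence every vertex has at most one neighbour in $V_{-1}$, so $V_{-1}$ is an open packing. Also $\omega(f)=|V_1|+2|V_2|-|V_{-1}|\ge n-2|V_{-1}|+|V_2|$.

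\emph{Upper bound $\gamma_{stR}(G)\le n-\rho^o(G)$.} Let $S$ be a maximum open packing. I will build an STRD function of weight exactly $n-|S|$ with $V_{-1}=S$ and $|V_2|=|S|$. For each $s\in S$, choose one neighbour $t(s)$ and set $f(t(s))=2$; all other vertices get $1$.

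\begin{itemize}
\item The map $s\mapsto t(s)$ is injective, because a vertex adjacent to two members of $S$ would contradict $S$ being an open packing. This gives $|V_2|=|S|$ and weight $n-2|S|+|S|=n-|S|$.
\item $t(s)\notin S$ is required, and this can fail: if $s,s'\in S$ are adjacent, then $s'$ already has the neighbour $s$ in $S$. But then $s$ has only one neighbour in $S$, so its other two neighbours lie outside $S$, and I choose $t(s)$ among them.
\item Condition (ii) holds, since each $s\in S$ has the neighbour $t(s)$ with value $2$.
\item Condition (i): a vertex has at most one neighbour in $S$. Its neighbourhood sum is therefore at least $-1+1+1=1$ when it has one such neighbour, and at least $3$ when it has none.
\end{itemize}

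A minor issue is that different $t(s)$ might coincide, but injectivity rules this out.

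\emph{Lower bound $\gamma_{stR}(G)\ge n-\rho^o(G)$.} Take a $\gamma_{stR}(G)$-function $f$. Condition (ii) gives every vertex of $V_{-1}$ a neighbour in $V_2$. Since $V_{-1}$ is an open packing, each vertex of $V_2$ is adjacent to at most one vertex of $V_{-1}$. The vertex $s\mapsto$ (some $V_2$-neighbour of $s$) map is therefore injective, so $|V_2|\ge|V_{-1}|$. Then
\[
\omega(f)=n-2|V_{-1}|+|V_2|\ge n-|V_{-1}|\ge n-\rho^o(G).
\]

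Combining the two bounds gives equality. The only delicate step is choosing $t(s)\notin S$ when $S$ contains adjacent vertices; it is settled by the degree-$3$ argument above.
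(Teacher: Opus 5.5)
Your proof is correct and follows essentially the same route as the paper: you turn a maximum open packing $S$ into an STRD function of weight $n-|S|$ by giving one neighbour of each $s\in S$ the value $2$, and conversely you show $V_{-1}$ is an open packing with $|V_2|\ge|V_{-1}|$. You also make explicit that $t(s)$ can be chosen outside $S$ (since each vertex has at most one neighbour in $S$ and degree $3$); the paper's construction leaves this implicit, and it is genuinely needed, for example when $S$ contains two adjacent vertices as in $K_{3,3}$.
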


\begin{proof}
    First, let $S$ be an open packing with $|S| = \rho^{o}(G)$ and construct an STRD function $f = (V_{-1}, V_{1}, V_{2})$ as follows.
    Starting with $V_{2} = \emptyset$, for every $x \in S$, add one neighbor of $x$ to $V_{2}$.
    As $S$ is an open packing, no vertex is added to $V_{2}$ twice; hence, this process yields a set of size $|V_{2}| = |S|$.
    Afterward set $V_{-1} = S$ and $V_{1} = V(G) - (V_{2} \cup V_{-1})$.
    Since $V_{-1}$ is an open packing, every $v \in V(G)$ has at most one neighbor in $V_{-1}$ and at least two neighbors in $V_{1} \cup V_{2}$; hence $f(N(v)) \geq 1$.
    Further we ensured that every vertex from $V_{-1}$ has a neighbor in $V_{2}$.
    Therefore, $f$ is an STRD function on $G$ with weight $\omega(f) = 2|V_{2}| + |V_{1}| - |V_{-1}| = n - |S|$.
    Since $\gamma_{stR}(G) \leq \omega(f)$ and $|S| = \rho^{o}(G)$ we get $\gamma_{stR}(G) + \rho^{o}(G) \leq n$. 

    Conversely, let $f = (V_{-1}, V_{1}, V_{2})$ be a $\gamma_{stR}(G)$-function.
    Since $G$ is cubic, for every $v \in V(G)$ the condition $f(N(v)) \geq 1$ yields that $v$ has at most one neighbor in $V_{-1}$; in other words, $V_{-1}$ is an open packing.
    From this fact and the condition that every vertex from $V_{-1}$ has a neighbor in $V_{2}$ we derive $|V_{-1}| \leq |V_{2}|$.
    Using these observations and $|V_{1}| = n - |V_{2}| - |V_{-1}|$ we conclude
    \begin{equation*}
        \omega(f) = 2|V_{2}| + |V_{1}| - |V_{-1}| = n + |V_{2}| - 2|V_{-1}| \geq n - |V_{-1}| \geq n - \rho^{o}(G).
    \end{equation*}
    Since $\omega(f) = \gamma_{stR}(G)$ we get the reverse inequality $\gamma_{stR}(G) + \rho^{o}(G) \geq n$.
\end{proof}

Let $G$ be a cubic graph of order $n$.
From the definition of a $2$-tuple total dominating set and an open packing follows $\gamma_{\times 2, t}(G) + \rho^{o}(G) = n$ and for the signed total domination number \textcite{Henning04} showed $\gamma_{st}(G) + 2\rho^{o}(G) = n$.
Consequently, 
\begin{equation}
    \label{eq: chain for cubic graphs}
    \gamma_{stR}(G) = \gamma_{\times 2, t}(G) = \frac{1}{2}(n + \gamma_{st}(G)) = n - \rho^{o}(G).
\end{equation}
One aspect of \cref{eq: chain for cubic graphs} is that bounds in terms of order for $\gamma_{\times 2, t}(G)$, $\gamma_{st}(G)$, and $\rho^{o}(G)$ can be translated into bounds for $\gamma_{stR}(G)$.
In \cite{Henning04} the sharp bounds $n/3 \leq \gamma_{st}(G) \leq 5n/7$ were presented, while \textcite{HenningY10} gave the improved sharp upper bound $\gamma_{\times 2, t}(G) \leq 5n/6$ for every connected cubic graph different from the Heawood graph depicted in \cref{fig:Heawood graph}.
Thus: 

\begin{figure}[]
    \centering
    \begin{tikzpicture}
        \foreach \i in {0,...,13}{
            \node[hvertex] (\i) at (\i*360/14:1.25) {};
        }
        \draw[hedge] (0) -- (1) -- (2) -- (3) -- (4) -- (5) -- (6) -- (7) -- (8) -- (9) -- (10) -- (11) -- (12) -- (13) -- (0);
        \draw[hedge] (0) -- (5);
        \draw[hedge] (2) -- (7);
        \draw[hedge] (4) -- (9);
        \draw[hedge] (6) -- (11);
        \draw[hedge] (8) -- (13);
        \draw[hedge] (10) -- (1);
        \draw[hedge] (12) -- (3);
    \end{tikzpicture}
    \caption{The Heawood graph.}
    \label{fig:Heawood graph}
\end{figure}

\begin{corollary}
   If $G$ is a cubic graph of order $n$, then $2n/3 \leq \gamma_{stR}(G) \leq 6n/7$ and these bounds are sharp; if additionally $G$ is connected and different from the Heawood graph, then $\gamma_{stR}(G) \leq 5n/6$ and this bound is sharp.
\end{corollary}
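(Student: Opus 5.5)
The plan is to read every part of the corollary off the chain \cref{eq: chain for cubic graphs}, which translates bounds on $\gamma_{st}(G)$ and $\gamma_{\times 2, t}(G)$ for cubic graphs directly into bounds on $\gamma_{stR}(G)$. Sharpness then comes for free, because the chain consists of equalities.

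\textbf{General bounds.} Henning's sharp bounds $n/3 \leq \gamma_{st}(G) \leq 5n/7$ from \cite{Henning04} give, via $\gamma_{stR}(G) = \frac{1}{2}(n + \gamma_{st}(G))$,
\begin{equation*}
    \tfrac{1}{2}\left(n + \tfrac{n}{3}\right) = \tfrac{2n}{3} \leq \gamma_{stR}(G) \leq \tfrac{1}{2}\left(n + \tfrac{5n}{7}\right) = \tfrac{6n}{7}.
\end{equation*}
Any cubic graph attaining either of Henning's bounds attains the corresponding bound here, because the relation between the two invariants is an identity.

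If one prefers explicit witnesses, I would check them directly with \cref{thm: gallai type STR domination open packing}, i.e.\ by computing $\rho^{o}$. For the lower bound, $\rho^{o}(K_4) = 1$ gives $\gamma_{stR}(K_4) = 3 = 2\cdot 4/3$ after rounding up. A cleaner witness is any cubic graph admitting a perfect open packing ($\rho^{o} = n/3$), for example $K_{3,3}$, where $\rho^{o} = 2$ and $\gamma_{stR} = 4$, consistent with \cref{prop: str domination number complete bipartite graph}. For the upper bound, the Heawood graph has $\rho^{o} = 2$: its open packings correspond to sets of points no two on a common line in the Fano plane and vice versa, suitably interpreted. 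This gives $\gamma_{stR} = 12 = 6\cdot 14/7$.

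\textbf{Connected case.} For connected cubic $G$ other than the Heawood graph, the result of \textcite{HenningY10}, $\gamma_{\times 2, t}(G) \leq 5n/6$, transfers directly through $\gamma_{stR}(G) = \gamma_{\times 2, t}(G)$. Its sharpness transfers in the same way.

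\textbf{Main obstacle.} The only real work is making sure each cited bound is sharp \emph{within the cubic class} and holds exactly as quoted. Given the chain of equalities, I would simply cite the extremal examples from \cite{Henning04,HenningY10} rather than reconstruct them.
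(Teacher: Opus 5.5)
Your argument is the same as the paper's. The corollary is read off \cref{eq: chain for cubic graphs} using Henning's sharp bounds $n/3 \leq \gamma_{st}(G) \leq 5n/7$ and Henning--Yeo's bound $\gamma_{\times 2, t}(G) \leq 5n/6$, and sharpness carries over because the relations are identities.

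One slip in your optional witnesses: $K_4$ does not attain the lower bound, since $\gamma_{stR}(K_4) = 3 > 8/3$, and ``after rounding up'' is not equality. Your other two witnesses are valid. $K_{3,3}$ has $\gamma_{stR} = 4 = 2 \cdot 6/3$. The Heawood graph has $\rho^{o} = 2$, because two points always share a line and two lines always share a point, while a point and a line never have a common neighbor. This gives $\gamma_{stR} = 12 = 6 \cdot 14/7$.
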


Another aspect of \cref{eq: chain for cubic graphs} concerns the decision problems associated with $\gamma_{stR}(G)$, $\gamma_{\times 2, t}(G)$, $\gamma_{st}(G)$, and $\rho^{o}(G)$ for cubic graphs $G$: If one of the problems is \NP-complete, then all four problems are.
Formally, we define \textsc{Signed Total Roman Domination}, \textsc{$2$-Tuple Total Domination}, \textsc{Signed Total Domination}, and \textsc{Open Packing} as the problems where, given a graph $G$ and an integer $k$, one has to decide whether $\gamma_{stR}(G) \leq k$, $\gamma_{\times 2, t}(G) \leq k$, $\gamma_{st}(G) \leq k$, or $\rho^{o}(G) \geq k$ holds, respectively.

For the following reductions recall that a \emph{subdivision} of an edge $e=vw$ is performed by deleting $e$ from $G$ and then adding a new vertex $x_{e}$ as well as edges $vx_{e}$ and $wx_{e}$.
The \emph{subdivision graph $S(G)$} is obtained by subdividing every edge of $G$ once.
We note that every independent set of $G$ is an open packing of $S(G)$ while for every open packing $B$ of $S(G)$, $B \cap V(G)$ is and independent set in $G$.

\begin{figure}[]
    \centering
    \begin{subfigure}[t]{.6\textwidth}
        \centering
        \begin{tikzpicture}[scale=1]

            \begin{scope}[shift={(-2.75,0)}]
            \node[hvertex, label=below:$y_{2}$] (2) at (0,0.25) {};
            \node[hvertex, label=above:$z_{1}$] (1') at (0,1) {};
            \node[hvertex, label=below:$z_{2}$] (2') at (-1,1.25) {};
            \node[hvertex, label=above:$z_{3}$] (3') at (-1,2.75) {};
            \node[hvertex, label=above:$z_{4}$] (4') at (1,2.75) {};
            \node[hvertex, label=below:$z_{5}$] (5') at (1,1.25) {};
            \node[hvertex, label=above:$z_{6}$] (6') at (-.3,1.9) {};
            \node[hvertex, label=above:$z_{7}$] (7') at (.3,1.9) {};

            \draw[hedge] (2) -- (1') -- (2') -- (3') -- (4') -- (5') -- (1');
            \draw[hedge] (2') -- (6') -- (3');
            \draw[hedge] (4') -- (7') -- (5');
            \draw[hedge] (6') -- (7');
            \end{scope}

            \begin{scope}[shift={(0,0)}]
            \node[hvertex, label=below:$y_{3}$] (3) at (0,0.25) {};
            \node[hvertex, label=above:$z_{1}'$] (1') at (0,1) {};
            \node[hvertex, label=below:$z_{2}'$] (2') at (-1,1.25) {};
            \node[hvertex, label=above:$z_{3}'$] (3') at (-1,2.75) {};
            \node[hvertex, label=above:$z_{4}'$] (4') at (1,2.75) {};
            \node[hvertex, label=below:$z_{5}'$] (5') at (1,1.25) {};
            \node[hvertex, label=above:$z_{6}'$] (6') at (-.3,1.9) {};
            \node[hvertex, label=above:$z_{7}'$] (7') at (.3,1.9) {};

            \draw[hedge] (3) -- (1') -- (2') -- (3') -- (4') -- (5') -- (1');
            \draw[hedge] (2') -- (6') -- (3');
            \draw[hedge] (4') -- (7') -- (5');
            \draw[hedge] (6') -- (7');
            \end{scope}

            \begin{scope}[shift={(2.75,0)}]
            \node[hvertex, label=below:$y_{4}$] (4) at (0,0.25) {};
            \node[hvertex, label=above:$z_{1}''$] (1') at (0,1) {};
            \node[hvertex, label=below:$z_{2}''$] (2') at (-1,1.25) {};
            \node[hvertex, label=above:$z_{3}''$] (3') at (-1,2.75) {};
            \node[hvertex, label=above:$z_{4}''$] (4') at (1,2.75) {};
            \node[hvertex, label=below:$z_{5}''$] (5') at (1,1.25) {};
            \node[hvertex, label=above:$z_{6}''$] (6') at (-.3,1.9) {};
            \node[hvertex, label=above:$z_{7}''$] (7') at (.3,1.9) {};

            \draw[hedge] (4) -- (1') -- (2') -- (3') -- (4') -- (5') -- (1');
            \draw[hedge] (2') -- (6') -- (3');
            \draw[hedge] (4') -- (7') -- (5');
            \draw[hedge] (6') -- (7');
            \end{scope}

            \node[hvertex, label=below:$y_{1}$] (1) at (0,-.75) {};
            \node[hvertex, label=below:$x_{e}$] (0) at (-1.5,-.75) {};

            \draw[hedge] (0) -- (1) -- (2) -- (3) -- (4) -- (1);
        \end{tikzpicture}
        \caption{The planar gadget.}
        \label{fig:planar gadget}
    \end{subfigure}
    \begin{subfigure}[t]{.38\textwidth}
        \centering
        \begin{tikzpicture}[scale=.8]
            \node[hvertex, label=above:$x_{e}$] (0) at (0,0) {};
            \node[hvertex, label=above:$y_{1}$] (1) at (1,0) {};
            \node[hvertex, label=above:$y_{2}$] (2) at (2,0) {};
            \node[hvertex, label=above:$y_{3}$] (3) at (3,0) {};
            \node[hvertex, label=above:$y_{4}$] (4) at (4,0) {};
            \node[hvertex, label=above:$y_{5}$] (5) at (5,0) {};
            \node[hvertex, label=above:$y_{6}$] (6) at (6,0) {};
            \node[hvertex, label=below:$x_{e'}$] (13) at (0,-1) {};
            \node[hvertex, label=below:$y_{12}$] (12) at (1,-1) {};
            \node[hvertex, label=below:$y_{11}$] (11) at (2,-1) {};
            \node[hvertex, label=below:$y_{10}$] (10) at (3,-1) {};
            \node[hvertex, label=below:$y_{9}$] (9) at (4,-1) {};
            \node[hvertex, label=below:$y_{8}$] (8) at (5,-1) {};
            \node[hvertex, label=below:$y_{7}$] (7) at (6,-1) {};

            \draw[hedge] (0) -- (1) -- (2) -- (3) -- (4) -- (5) -- (6) -- (7) -- (8) -- (9) -- (10) -- (11) -- (12) -- (13);
            \draw[hedge] (1) -- (12);
            \draw[hedge] (2) -- (11);
            \draw[hedge] (3) -- (10);
            \draw[hedge] (4) -- (7);
            \draw[hedge] (5) -- (8);
            \draw[hedge] (6) -- (9);
        \end{tikzpicture}
        \caption{The bipartite gadget.}
        \label{fig:bipartite gadget}
    \end{subfigure}
    \caption{}
\end{figure}

\begin{theorem}
    \label{thm:NPcompleteCubicPlanar}
    \textsc{Open Packing} is \NP-complete, even when restricted to cubic planar graphs.
\end{theorem}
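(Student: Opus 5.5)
Membership in \NP{} is immediate: an open packing $S$ with $|S| \geq k$ is a certificate, and checking $|N(v) \cap S| \leq 1$ for all $v$ takes polynomial time. For hardness I would reduce from \textsc{Independent Set} restricted to cubic planar graphs, which is \NP-complete (Garey--Johnson). The observation preceding the theorem is the starting point. Every independent set of $H$ is an open packing of $S(H)$. Conversely, for an open packing $B$ of $S(H)$, the set $B \cap V(H)$ is independent in $H$. The remaining difficulty is that $S(H)$ is not cubic: each subdivision vertex $x_e$ has degree $2$. The planar gadget of \cref{fig:planar gadget} repairs exactly this.

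\textbf{Construction.} Given a cubic planar $H$ with $m$ edges, form $G$ from $S(H)$ by attaching one copy of the gadget to each $x_e$. The gadget consists of $y_1$ adjacent to $x_e$, a triangle $y_2y_3y_4$ joined to $y_1$, and a copy of the block $z_1,\dots,z_7$ hanging from each of $y_2,y_3,y_4$. I would check degrees directly. We have $x_e$ of degree $3$, $y_1$ with neighbours $x_e,y_2,y_3,y_4$ (hmm, that is degree $4$). So I would first reread the gadget: the intended edges are presumably $x_e y_1$, $y_1y_2$, $y_1y_4$, and the path $y_2y_3y_4$, each $y_i$ ($i=2,3,4$) getting its third neighbour $z_1$ in its block. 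Then every vertex, including $z_1,\dots,z_7$, has degree $3$. Planarity is clear, since planar pieces are glued at cut vertices inside faces. The size of $G$ is polynomial.

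\textbf{Key lemma.} Let $c$ be a constant to be determined by a local case analysis, namely the maximum number of gadget vertices (excluding $x_e$) that an open packing can contain. I would prove the following two facts.
\begin{enumerate}[(a)]
\item $c$ is attained by a gadget packing avoiding $y_1$, so that it is compatible with any choice at $x_e$ and at the original vertices. In particular, two gadget-internal vertices never create a conflict through $x_e$, because $x_e$'s only gadget neighbour is $y_1$.
\item Any open packing $B$ of $G$ can be modified, gadget by gadget, into one of at least the same size. The modified packing uses exactly $c$ vertices in each gadget, contains no $x_e$, and contains no $y_1$. Here one uses that $x_e$ and $y_1$ are each worth at most one vertex, which can be traded for a gadget-internal vertex.
\end{enumerate}
After this normalization, the intersection $B \cap V(H)$ must be independent in $H$. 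The reason is that adjacent original vertices $u,w$ share the neighbour $x_{uw}$. Moreover, no $x_e$ lies in $B$. Hence
\[
\rho^{o}(G) = \alpha(H) + cm,
\]
and the reduction $(H,k) \mapsto (G, k + cm)$ is correct.

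\textbf{Main obstacle.} The main work is the local analysis of the $z$-block and the $y$-part, which determines $c$ and justifies the exchange argument. I would first compute the maximum open packing of the $7$-vertex block with the attachment vertex, once with and once without using $z_1$ or $y_i$. The aim is to show that the value "with the attachment" never exceeds the value "without". This lets every excess vertex near $x_e$ be pushed inward. A small exhaustive check via the two triangles $z_2z_3z_6$ and $z_4z_5z_7$ should suffice, since an open packing meets each closed neighbourhood in a restricted way.
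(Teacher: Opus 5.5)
Your reduction is the paper's: same source problem, same gadget, and the identity $\rho^{o}(G)=\alpha(H)+cm$ does hold, with $c=8$. Your corrected reading of the $y$-part, the $4$-cycle $y_1y_2y_3y_4$ with pendant edge $y_1x_e$, is right. There is a genuine gap, however, and it sits exactly where the proof's content lies. You never determine $c$, never exhibit a packing attaining it, and never prove the upper bound on which the exchange in (b) depends.

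Claim (a) is also false as stated. Write $D_e$ for the $25$ vertices of the gadget at $x_e$. Every open packing $T\subseteq D_e$ with $|T|=c$ contains $y_2$ or $y_4$ (see the partition below). Both are neighbours of $y_1$, which is also adjacent to $x_e$, so no maximum gadget packing is compatible with $x_e\in B$. Consequently $x_e$ cannot simply be ``traded for a gadget-internal vertex''. For the replacement in (b) not to shrink $B$, you need $|B\cap(D_e\cup\{x_e\})|\le c$ for every open packing $B$, i.e.\ that putting $x_e$ into $B$ costs the gadget a vertex. That is the key lemma, and your proposal only asserts it. The part of the local analysis you actually describe (the $z$-blocks with and without their attachment vertex) is not where the interaction with $x_e$ happens; that happens in the $y$-part.

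The paper settles this in one stroke. It partitions $D_e\cup\{x_e\}$ into eight sets: $\{x_e,y_2,y_4\}=N(y_1)$ and $\{y_1,y_3\}\subseteq N(y_2)$, plus, for each of the three $z$-blocks, $\{z_1,z_3,z_6\}=N(z_2)$ and $\{z_2,z_4,z_5,z_7\}$. Within each set any two vertices have a common neighbour, so an open packing meets each set at most once and $|B\cap(D_e\cup\{x_e\})|\le 8$. Conversely, $\{y_3,y_4,z_3,z_4,z_3',z_4',z_3'',z_4''\}$ added to an independent set of $H$ gives an open packing. With this bound no normalization is needed: summing over the $m$ gadgets gives $|B\cap V(H)|\ge |B|-8m$. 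Moreover, $B\cap V(H)$ is independent because adjacent $u,w$ share the neighbour $x_{uw}$. Once the bound is known, your exchange would also be valid, since every vertex of the standard $8$-set has all its neighbours inside $D_e$, but it adds nothing beyond this direct count.
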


\begin{proof}
    \textsc{Open Packing} clearly is in \NP, so it remains to show \NP-hardness.
    We devise a reduction from the \NP-complete \textsc{Independent Set}-problem for cubic planar graphs \cite[pp. 194-195]{GareyJ79}.
    For this, we construct a graph $H$ from the subdivision graph $S(G)$ of a given cubic planar graph $G$ by attaching the gadget shown in \cref{fig:planar gadget} to the vertex $x_{e}$, for every $e \in E(G)$.
    $H$ is cubic and planar as well.
    We claim that $G$ has an independent set of cardinality at least $k$ if and only if $H$ has an open packing of cardinality at least $k+8m$, where $m=m(G)$ is the size of $G$.

    First, suppose that $A$ is an independent set of $G$ with $|A|\geq k$.
    Let the set $B$ be obtained from $A$ by adding the eight vertices $y_{3}$, $y_{4}$, $z_{3}$, $z_{4}$, $z_{3}'$, $z_{4}'$ $z_{3}''$, $z_{4}''$ from every gadget used in the construction of $H$.
    This set $B$ is an open packing of $H$ with $|B| = |A| + 8m \geq k + 8m$.
    
    Conversely, suppose that $B$ is an open packing of $H$ with $|B| \geq k+8m$.
    For every $e \in E(G)$, the vertices of the gadget attached to $x_{e}$ can be partitioned into the eight sets $\{x_{e}, y_{2}, y_{4}\}$, $\{y_{1}, y_{3}\}$, $\{z_{1}, z_{3}, z_{6}\}$, $\{z_{2}, z_{4}, z_{5}, z_{7}\}$, $\{z_{1}', z_{3}', z_{6}'\}$, $\{z_{2}', z_{4}', z_{5}', z_{7}'\}$, $\{z_{1}'', z_{3}'', z_{6}''\}$, and $\{z_{2}'', z_{4}'', z_{5}'', z_{7}''\}$.
    For each set, any two members have a common neighbor in $H$, so $B$ contains at most one member from each set.
    Summing up over all $e \in E(G)$, these eight sets account for $\leq 8m$ members of $B$, while the remaining $\geq k$ members must lie in $V(G)$.
    Thus, $|V(G) \cap B| \geq k$, and we are finished since $V(G) \cap B$ is an independent set of $G$.
\end{proof}

\begin{theorem}
    \label{thm:NPcompleteCubicBipartite}
    \textsc{Open Packing} is \NP-complete, even when restricted to cubic bipartite graphs.
\end{theorem}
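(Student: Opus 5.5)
We reduce from \textsc{Independent Set} on cubic graphs, using the subdivision graph as in the proof of \cref{thm:NPcompleteCubicPlanar}. The new difficulty is parity. The gadget in \cref{fig:bipartite gadget} consists of a path $x_{e} y_{1} \cdots y_{12} x_{e'}$ together with chords $y_{i}y_{j}$ where $i+j$ is odd. Every $y_i$ then has degree $3$, and $x_{e}$ and $x_{e'}$ each receive exactly one extra edge. The gadget is bipartite, but $x_{e}$ and $x_{e'}$ lie at odd distance, so they fall on \emph{opposite} sides of any bipartition. In $S(G)$, however, all subdivision vertices lie on the same side. Note also that we cannot start from \textsc{Independent Set} on cubic bipartite graphs, since that problem is polynomial.

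Given a cubic graph $G$, I would proceed as follows.
\begin{enumerate}
    \item Take two disjoint copies $S_1$ and $S_2$ of $S(G)$, with subdivision vertices $x_{e}^{1}$ and $x_{e}^{2}$ for $e \in E(G)$.
    \item Bipartition $S_1$ as (original vertices, subdivision vertices), and $S_2$ the other way round.
    \item For every $e \in E(G)$, attach one copy of the gadget with $x_{e}^{1}$ playing $x_{e}$ and $x_{e}^{2}$ playing $x_{e'}$.
\end{enumerate}
The resulting graph $H$ is cubic. It is bipartite because each gadget's endpoints now sit on opposite sides, matching the gadget's own bipartition. $H$ is also constructed in polynomial time.

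The claim will be: $G$ has an independent set of size at least $k$ if and only if $H$ has an open packing of size at least $2k + cm$, where $c$ is the maximum contribution of one gadget. I expect $c$ to be small (around $4$ or $5$), to be determined from the gadget.

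For the forward direction, take an independent set $A$ of $G$ and put both copies of $A$ into the packing. Then add, from each gadget, a fixed set of $c$ vertices among $y_{1},\ldots,y_{12}$. These must be chosen so that no two have a common neighbour, and none of them is at distance $2$ from a chosen original vertex. Since the gadget touches $S_1$ and $S_2$ only at $x_{e}^{i}$, it suffices that the chosen $y$'s avoid $y_{2}$ and $y_{11}$, the vertices at distance $2$ from the $S_i$ side. Then every common neighbour lies inside the gadget or is an $x_{e}^{i}$, which has only one neighbour ($y_1$ or $y_{12}$) in the gadget besides original vertices. Also, $A$ independent in $G$ means the two ends of an edge never share the neighbour $x_{e}^{i}$.

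For the converse, mimic the proof of \cref{thm:NPcompleteCubicPlanar}:
\begin{enumerate}
    \item Partition the fourteen vertices $x_{e}^{1}, y_{1},\ldots,y_{12}, x_{e}^{2}$ of each gadget into exactly $c$ classes, each of whose members pairwise share a common neighbour in $H$, so that every open packing meets each class at most once. Candidate classes come from pairs at distance $2$ along the path and across chords, for example $\{x_{e}, y_{2}, y_{12}\}$, $\{y_{1}, y_{3}, y_{11}\}$, and so on.
    \item Conclude that an open packing $B$ with $|B| \geq 2k + cm$ has at least $2k$ vertices in $V(S_1) \cup V(S_2)$ outside the subdivision vertices, that is, among the two copies of $V(G)$.
    \item Hence one copy contains at least $k$ of them, and $B \cap V(G)$ is independent in $G$ by the remark on subdivision graphs.
\end{enumerate}

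The main obstacle is the gadget bookkeeping. The maximum open packing size of the gadget must equal the number of classes in a partition into ``pairwise common-neighbour'' sets, and a maximum packing must be achievable while avoiding $y_{2}$ and $y_{11}$. I would first compute the open packing number of the gadget by hand, using its mirror symmetry $y_i \leftrightarrow y_{13-i}$. I would then search for the matching partition. If the numbers do not match, I would adjust which vertices of the gadget serve as attachment points.
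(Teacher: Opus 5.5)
\textbf{The gadget verification is missing, and the one criterion you state for it is wrong.}

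Your construction is the paper's: two copies of $S(G)$, joined edge by edge through the gadget at $x_e$ and $x_{e'}$, with the parity bipartition. The skeleton of your converse also matches: partition each gadget into classes whose members pairwise share a neighbour, count, then apply pigeonhole to the two copies.

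The gap is that you stop where the proof's content lies. You never determine $c$, never exhibit the gadget vertices for the forward direction, and never produce the partition. You even leave open that the two numbers might not agree. They do agree, with $c=4$:
\begin{itemize}
\item \emph{Forward direction.} Add $\{y_2,y_5,y_8,y_{11}\}$ from each gadget. Its four neighbourhoods are pairwise disjoint and in fact partition $\{y_1,\ldots,y_{12}\}$. None of them contains $x_e$ or $x_{e'}$, so the set is compatible with $A\cup A'$.
\item \emph{Converse.} Use the classes $\{x_e,y_2,y_{12}\}$, $\{x_{e'},y_1,y_{11}\}$, $\{y_3,y_5,y_7,y_9\}$, $\{y_4,y_6,y_8,y_{10}\}$, with threshold $2k+4m$.
\end{itemize}
Your candidate class $\{y_1,y_3,y_{11}\}$ would strand $x_{e'}$. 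Inside the gadget, $x_{e'}$ shares a neighbour only with $y_1$ and $y_{11}$ (through $y_{12}$), so it has to be grouped with them.

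\textbf{Off-by-one error.} A vertex of $A$ has only subdivision vertices $x_e^1$ as neighbours, and the only gadget neighbour of $x_e^1$ is $y_1$. Your own next sentence says this. So the gadget vertices that can clash with $A\cup A'$ are $y_1$ and $y_{12}$, not $y_2$ and $y_{11}$. Your condition therefore fails in both directions:
\begin{itemize}
\item \emph{It is not sufficient.} The set $\{y_1,y_4,y_5,y_{12}\}$ avoids $y_2$ and $y_{11}$ and is an open packing of the gadget. But $y_1$ and any endpoint of $e$ lying in $A$ share the neighbour $x_e^1$.
\item \emph{With the real constraint added, it is fatal.} The classes $\{y_3,y_5,y_7,y_9\}$ and $\{y_4,y_6,y_8,y_{10}\}$ show that an open packing contains at most two vertices among $y_3,\ldots,y_{10}$. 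Hence any four gadget vertices compatible with $A\cup A'$ must include both $y_2$ and $y_{11}$, which are exactly the two your criterion excludes.
\end{itemize}
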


\begin{proof}
    Let $G$ be a cubic graph and let $G'$ be a copy of $G$; in particular, we denote the copy of edge $e$ with $e'$.
    The graph $H$ is obtained from $S(G) \cup S(G')$ by attaching the gadget shown in \cref{fig:bipartite gadget} to the vertices $x_{e}$ from $S(G)$ and $x_{e'}$ from $S(G')$, for every $e \in E(G)$.
    $H$ is cubic and bipartite; one partite set contains all vertices from $G$, $x_{e'}$ for every $e \in E(G)$, and the vertices $y_{i}$ with $i$ odd from every gadget, while the other partite set contains all vertices from $G'$, $x_{e}$ for every $e \in E(G)$, and the vertices $y_{i}$ with $i$ even from every gadget.
    We claim that $G$ has an independent set of cardinality at least $k$ if and only if $H$ has an open packing of cardinality at least $2k+4m$, where $m$ is the size of $G$.

    First, suppose that $A$ is an independent set of $G$ with $|A|\geq k$ and denote the corresponding subset of $V(G')$ with $A'$.
    Let the set $B$ be obtained from $A \cup A'$ by adding the four vertices $y_{2}$, $y_{5}$, $y_{8}$, $y_{11}$ from every gadget used in the construction of $H$.
    This set $B$ is an open packing of $H$ with $|B| = |A| + |A'| + 4m \geq 2k + 4m$.

    Conversely, suppose that $B$ is an open packing of $H$ with $|B| \geq 2k+4m$.
    For every $e \in E(G)$, the vertices of the gadget attached to $x_{e}$ can be partitioned into the four sets $\{x_{e}, y_{2}, y_{12}\}$, $\{x_{e'}, y_{1}, y_{11}\}$, $\{y_{3}, y_{5}, y_{7}, y_{9}\}$, and $\{y_{4}, y_{6}, y_{8}, y_{10}\}$.
    For each set, any two members have a common neighbor in $H$, so $B$ contains at most one member from each set.
    Summing up over all $e \in E(G)$, these four sets account for $\leq 4m$ members of $B$, while the remaining $\geq 2k$ members must lie in $V(G) \cup V(G')$.
    Thus, $\max\{|V(G) \cap B|, |V(G') \cap B|\} \geq k$, and we are finished since $V(G) \cap B$ and $V(G') \cap B$ both correspond to an independent set of $G$.
\end{proof}

\begin{corollary}
    \textsc{Signed Total Roman Domination}, \textsc{$2$-Tuple Total Domination}, and \textsc{Signed Total Domination} are  \NP-complete, even when restricted to cubic planar or cubic bipartite graphs.
\end{corollary}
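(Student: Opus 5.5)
The plan is to combine the identity \cref{eq: chain for cubic graphs} with \cref{thm:NPcompleteCubicPlanar,thm:NPcompleteCubicBipartite}. First I will check membership in \NP. For each of the three problems, a certificate is simply a function $f: V(G) \to \{-1,1,2\}$ (respectively a set $S \subseteq V(G)$, respectively a function $f: V(G)\to\{-1,1\}$). Its defining conditions and its weight can be checked in polynomial time, so all three problems lie in \NP.

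For \NP-hardness I will give, for each problem, a polynomial reduction from \textsc{Open Packing} restricted to cubic planar (respectively cubic bipartite) graphs. The reduction leaves the graph unchanged and only transforms the integer. Let $(G,k)$ be an instance with $G$ cubic of order $n$. By \cref{eq: chain for cubic graphs}, that is, by \cref{thm: gallai type STR domination open packing} together with the two identities cited just before it, the following four statements are equivalent:
\begin{itemize}
    \item $\rho^{o}(G) \geq k$;
    \item $\gamma_{stR}(G) \leq n-k$;
    \item $\gamma_{\times 2, t}(G) \leq n-k$;
    \item $\gamma_{st}(G) \leq n-2k$.
\end{itemize}
Hence $(G,k) \mapsto (G,n-k)$ reduces \textsc{Open Packing} to \textsc{Signed Total Roman Domination} and to \textsc{$2$-Tuple Total Domination}, and $(G,k) \mapsto (G,n-2k)$ reduces it to \textsc{Signed Total Domination}. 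The graph class is preserved because $G$ itself is not modified, so cubic planar and cubic bipartite instances map to instances of the same kind.

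I expect no real obstacle. The only points needing care are the following.
\begin{itemize}
    \item The identity $\gamma_{\times 2, t}(G) + \rho^{o}(G) = n$ is stated only as following "from the definition". If I want the argument to be self-contained, I would verify it via complements: for cubic $G$, a set $S$ is $2$-tuple total dominating if and only if every vertex has at most one neighbor in $V(G)\setminus S$, i.e.\ if and only if $V(G)\setminus S$ is an open packing.
    \item The reduction must respect the direction of each inequality, since \textsc{Open Packing} is a maximization problem while the other three are minimization problems.
\end{itemize}
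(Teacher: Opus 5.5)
Your proposal is correct and matches the paper's route: keep the cubic planar or cubic bipartite graph unchanged and map $k$ to $n-k$ (for $\gamma_{stR}$ and $\gamma_{\times 2,t}$) or to $n-2k$ (for $\gamma_{st}$) via \cref{eq: chain for cubic graphs}, reducing from the \textsc{Open Packing} hardness results of \cref{thm:NPcompleteCubicPlanar,thm:NPcompleteCubicBipartite}. Your complement argument for $\gamma_{\times 2,t}(G)+\rho^{o}(G)=n$ and your explicit \NP-membership check fill in details the paper leaves implicit.
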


In \cite[Examples 5 and 6]{Volkmann16a} the signed total Roman domination number of all paths and cycles was determined. 
Therefore, we can characterize the complexity of deciding $\gamma_{stR}(G)$ for isolate-free graphs $G$ in terms of $\Delta(G)$ as follows: 

\begin{corollary}
    \label{cor:complexity STR domination and max degree}
    $\gamma_{stR}(G)$ can be computed efficiently for isolate-free graphs $G$ with $\Delta(G) \leq 2$, while deciding $\gamma_{stR}(G)$ is \NP-complete for graphs with $\Delta(G) \geq 3$.
\end{corollary}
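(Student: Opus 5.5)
The statement has two parts, and I would treat them separately.

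\textbf{Easy part ($\Delta(G)\le 2$).} An isolate-free graph $G$ with $\Delta(G) \le 2$ is a disjoint union of paths of order at least $2$ and cycles. I first observe that both conditions defining an STRD function are local to neighborhoods, so they decompose over components. Hence $f$ is an STRD function on $G$ if and only if its restriction to each component is one, and $\gamma_{stR}(G)$ is the sum of the $\gamma_{stR}$-values of the components. These values are given by closed formulas in terms of $n(P_k)$ and $n(C_k)$ from \cite[Examples 5 and 6]{Volkmann16a}. Computing the components and summing the formula values takes linear time.

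\textbf{Hard part ($\Delta(G)\ge 3$).} I read ``for graphs with $\Delta(G)\ge 3$'' as ``restricted to graphs with $\Delta(G)\ge 3$'', or more strongly with $\Delta(G)=\Delta$ fixed for any $\Delta \ge 3$.
\begin{itemize}
\item Membership in \NP{} is clear: an STRD function of weight at most $k$ is a certificate verifiable in polynomial time.
\item For $\Delta = 3$, hardness follows from the preceding corollary, since cubic graphs have $\Delta = 3$ and \textsc{Signed Total Roman Domination} is \NP-complete on cubic planar graphs. This in turn rests on Theorem~\ref{thm: gallai type STR domination open packing} and Theorem~\ref{thm:NPcompleteCubicPlanar}.
\item For a fixed $\Delta \ge 4$, I reduce from the cubic case. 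Given a cubic $G$ and bound $k$, let $G' = G \cup K_{\Delta+1}$ (disjoint union). Then $\Delta(G') = \Delta$, and by additivity over components together with \cref{prop: str domination number complete graph},
\[
\gamma_{stR}(G') = \gamma_{stR}(G) + 3 .
\]
So $\gamma_{stR}(G)\le k$ if and only if $\gamma_{stR}(G') \le k+3$, which is a polynomial reduction.
\end{itemize}

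The only points that need care are the additivity over components and the fact that the padding component contributes a known constant. Both are immediate: additivity holds because the constraints are local, and $K_{\Delta+1}$ has $\Delta+1\ge 3$ vertices, so \cref{prop: str domination number complete graph} applies and gives exactly $3$. If connectivity were required, one would instead need a gadget attached by an edge, and the hard step would be controlling its contribution. The statement does not demand connectivity, so I would use the disjoint union.
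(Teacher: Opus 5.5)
Your proof is correct and follows the paper's reasoning. For $\Delta(G)\le 2$, additivity of $\gamma_{stR}$ over components reduces the problem to the known path and cycle values from \cite[Examples 5 and 6]{Volkmann16a}, and for $\Delta(G)\ge 3$ hardness is inherited from the cubic case. The paper does not need your $K_{\Delta+1}$ padding, because cubic graphs already belong to the class of graphs with $\Delta(G)\ge 3$. The padding is nonetheless valid, since it uses $\gamma_{stR}(K_{\Delta+1})=3$ from \cref{prop: str domination number complete graph}, and it gives the slightly stronger statement that the problem remains \NP-complete for every fixed maximum degree $\Delta\ge 3$.
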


We remark that the same dichotomy holds for $\gamma_{2\times,t}(G)$, $\gamma_{st}(G)$, and $\rho^{o}(G)$.
Now we turn our attention to the effect a degree-3 vertex has on the signed total Roman domatic number of a graph.

\begin{theorem}
    \label{thm: d str vertex of degree 3}
    If a graph $G$ with $\delta(G) \geq 1$ contains a vertex of degree $3$, then $d_{stR}(G) = 1$.
\end{theorem}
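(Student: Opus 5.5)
We argue by contradiction. Suppose $\{f_1,\dots,f_d\}$ is an STRD family with $d\ge 2$, and let $v$ be a vertex with $N(v)=\{a,b,c\}$. The whole argument is a double count on the $d\times 3$ array $M=(f_i(u))_{i\in[d],\,u\in N(v)}$. Condition (i) says every row sum is at least $1$. The family condition says every column sum is at most $1$. Hence
\[
d\le \sum_{i=1}^d f_i(N(v)) = \sum_{u\in N(v)}\sum_{i=1}^d f_i(u)\le 3 ,
\]
so $d\le 3$.

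\textbf{Rows.} Since the entries lie in $\{-1,1,2\}$, a row with two entries $-1$ has sum at most $-1+(-1)+2=0<1$. So every row contains at most one $-1$. Thus $M$ has at most $d$ entries equal to $-1$.

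\textbf{Columns.} For $d\in\{2,3\}$, a column with no $-1$ has sum at least $d\ge 2>1$. So every column contains at least one $-1$, and $M$ has at least $3$ entries equal to $-1$.

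\textbf{Case $d=2$.} Here $M$ has at most $2$ and at least $3$ entries $-1$, a contradiction.

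\textbf{Case $d=3$.} Both counts force equality everywhere. Each row and each column contains exactly one $-1$, and $\sum_i f_i(N(v))=3$. Since each row sum is at least $1$, every row sum is exactly $1$. A row with exactly one $-1$ and sum $1$ must be $(-1,1,1)$ in some order, so no $2$ occurs on $N(v)$. After relabelling we may therefore assume
\[
f_1=(-1,1,1),\quad f_2=(1,-1,1),\quad f_3=(1,1,-1) \quad\text{on } (a,b,c),
\]
and every column sum equals $1$.

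To rule this out we use condition (ii). The vertex $a$ needs a neighbour $w_1$ with $f_1(w_1)=2$; similarly $b$ needs $w_2$ with $f_2(w_2)=2$, and $c$ needs $w_3$ with $f_3(w_3)=2$. None of these witnesses lies in $N(v)$, because no $2$ occurs there.

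Any vertex $w$ with $f_i(w)=2$ satisfies $\sum_j f_j(w)\le 1$. Hence the other two values at $w$ sum to at most $-1$, so at least one of them is $-1$, and if one of them is $1$ the other must be $-2$, which is impossible. Consequently the other two values at $w$ are $(-1,-1)$ or $\{2,-1\}$-type impossible combinations are excluded similarly, and the remaining possibilities can be listed. In particular $w$ carries a $-1$ under some $f_j$ with $j\ne i$, and that $-1$ in turn requires a $2$-neighbour under $f_j$.

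\textbf{Main obstacle.} This chain ($-1$ needs a $2$, a $2$ forces another $-1$) does not terminate by a purely local count around $v$. We would first test whether $v$ itself can serve as the witness. If $f_1(v)=2$, then $f_2(v)+f_3(v)\le -1$, so $v$ is a $-1$ under $f_2$ or $f_3$, and $v$ can serve as the witness for at most one of $a,b,c$.

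Our plan is to then examine the neighbourhoods of $a$, $b$, $c$ and of the witnesses $w_i$, applying the row/column counting above to each of these vertices. The aim is to show that the required pattern of $2$'s and $-1$'s cannot close up consistently, or else to exploit the pairwise distinctness of the $f_i$. Closing this $d=3$ case is the hard step and is not yet done.
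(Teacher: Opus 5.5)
Everything up to the end of your reduction in the case $d=3$ is correct, and it follows the paper's proof. The $d=2$ case is the same pigeonhole count on entries $-1$. In the $d=3$ case you reach the same intermediate conclusion as the paper: every row of $M$ is a permutation of $(-1,1,1)$, so no $f_i$ takes the value $2$ on $N(v)$. The paper gets this from the equality clause of \cref{prop: str domatic number leq mindegree}, since $d=\delta(G)=3$. Your direct double count is a self-contained substitute. However, you then leave the $d=3$ case open, so as written the proposal does not prove the theorem. This is a genuine gap.

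The missing idea is to apply the family condition and condition (ii) to $v$ itself, not to its neighbours. Since $\sum_{i=1}^{3} f_i(v)\le 1$ and each $f_i(v)\in\{-1,1,2\}$, some index $i$ has $f_i(v)=-1$, because three values from $\{1,2\}$ sum to at least $3$. Condition (ii) for $f_i$ at $v$ then requires some $u\in N(v)$ with $f_i(u)=2$. But you have just shown that no entry of $M$ equals $2$. This contradiction closes the case at once, and the chain of witnesses for $a,b,c$ that you were trying to control is never needed.

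Separately, your sentence about a vertex $w$ with $f_i(w)=2$ is garbled. The correct statement is that the other two values at $w$ must both be $-1$, since any pair containing a $1$ or a $2$ sums to at least $0$.
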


\begin{proof}
    Let $v \in V(G)$ with $d(v) = 3$ and $N(v) = \{u_1, u_2, u_3\}$.
    \cref{prop: str domatic number leq mindegree} yields $d_{stR}(G) \leq \delta(G) \leq 3$, so we need to show $d_{stR}(G) \neq 2$ and $d_{stR}(G) \neq 3$.

    To see that $d_{stR}(G) \neq 2$, suppose, for the sake of contradiction, that $\{f_1, f_2\}$ is an STRD family on $G$.
    For each $j \in [3]$, from $f_1(u_j) + f_2(u_j) \leq 1$ we conclude $f_1(u_j) = -1$ or $f_2(u_j) = -1$.
    Thus, without loss of generality, we can assume $f_1(u_j) = -1$ for two $j \in [3]$, leading to the contradiction $f_1(N(v)) \leq 0$. 

    To see that $d_{stR}(G) \neq 3$, suppose, for the sake of contradiction, that $\{f_1, f_2, f_3\}$ is an STRD family on $G$.
    Thus, $d_{stR}(G) = \delta(G) = 3$, so \cref{prop: str domatic number leq mindegree} yields $\sum_{j=1}^{3}f_i(u_j) = 1$ and thus, $\lmult f_i(u_j) \mid j \in [3] \rmult = \lmult -1,1,1\rmult$ for each $i \in [3]$.
    In particular, $f_i(u) \neq 2$ for all $u \in N(v)$, hence $f_i(v) \geq 1$ for all $i \in [3]$, leading to the contradiction $\sum_{i=1}^{3} f_i(v) \geq 3$.
\end{proof}

This improves \cite[Corollary 6]{Volkmann16b}, which asserts $d_{stR}(G) \leq 2$ for every graph $G$ with $\delta(G) = 3$, and generalizes \cite[Corollary 12]{Volkmann16b}, which asserts $d_{stR}(G) = 1$ for every cubic graph $G$.

Similar to \cref{cor:complexity STR domination and max degree}, we can characterize the complexity of deciding $d_{stR}(G)$ for isolate-free graphs $G$ in terms of $\Delta(G)$: 
\begin{corollary}
    $d_{stR}(G)$ can be computed efficiently for isolate-free graphs $G$ with $\Delta(G) \leq 3$, while deciding $d_{stR}(G)$ is \NP-complete for graphs with $\Delta(G) \geq 4$.
\end{corollary}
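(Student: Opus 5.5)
The corollary has two halves, and I would treat them separately.

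\emph{The easy half: $\Delta(G) \leq 3$.} Let $G$ be isolate-free with $\Delta(G) \leq 3$. If $G$ has a vertex of degree $3$, then \cref{thm: d str vertex of degree 3} gives $d_{stR}(G) = 1$ directly. Otherwise $\Delta(G) \leq 2$. Then \cref{prop: str domatic number leq mindegree} gives $d_{stR}(G) \leq \delta(G) \leq 2$, so $d_{stR}(G) = 1$ whenever $\delta(G) = 1$. The only remaining case is $\delta(G) = \Delta(G) = 2$, where every component is a cycle. Here I have to decide whether $d_{stR}(G) = 2$, i.e.\ whether there are two distinct STRD functions $f_1, f_2$ with $f_1(v) + f_2(v) \leq 1$ for all $v$.

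For this I would use the equality part of \cref{prop: str domatic number leq mindegree}. Every vertex $v$ has minimum degree, so $f_i(N(v)) = 1$ for each $i$. With two neighbors, the only admissible value pairs are $\lmult -1, 2\rmult$, and $\{f_1(u), f_2(u)\} = \{-1, 2\}$ for every $u$. So $f_1$ takes only the values $-1$ and $2$, every vertex has exactly one $-1$-neighbor and one $2$-neighbor, and $f_2 = 1 - f_1$. Following each cycle, this forces the pattern $-1,-1,2,2$ repeating, so the condition is checkable in linear time. Presumably it amounts to every cycle length being divisible by $4$, and the paper may already know this from Volkmann's results on cycles. Either way, all cases with $\Delta(G) \leq 3$ are decidable in polynomial time.

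\emph{The hard half: $\Delta(G) \geq 4$.} Membership in \NP\ is clear, since a family of $d \leq \delta(G)$ functions can be guessed and verified. For hardness I would reduce from a known \NP-complete question about $d_{stR}$. The natural candidate is the general \NP-completeness of deciding $d_{stR}(G) \geq d$, which I expect is known from \cite{Volkmann16b} or related work, for instance via a reduction from domatic number.

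The statement is presumably read as ``\NP-complete for the class of graphs with $\Delta(G) \leq k$ for each fixed $k \geq 4$'' (or for graphs with maximum degree exactly $k$). I would therefore need to show that hardness already appears at maximum degree $4$, and then pad to larger degrees. The padding would attach disjoint components with $\Delta = k$ and large $d_{stR}$, for example suitable complete graphs, using \cref{prop: str domatic number complete graph}. This works because the domatic number of a disjoint union is the minimum over its components.

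The main obstacle is degree $4$ itself. I would try to show that deciding $d_{stR}(G) = 2$ for $4$-regular graphs, or for graphs with $\delta = 2$ and $\Delta = 4$, is equivalent to a known hard problem, such as a structured $2$-coloring or partition problem. The idea is again the equality structure of \cref{prop: str domatic number leq mindegree} at minimum-degree vertices, which pins down the values of $f_1$ and $f_2$ on their neighborhoods. I expect the paper simply cites an existing result for this case.
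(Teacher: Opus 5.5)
\textbf{Verdict: the $\Delta(G)\le 3$ half is correct, but the \NP-hardness half is not proved.}

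\textbf{The case $\Delta(G)\le 3$.} This part is correct and follows the paper's proof. A degree-$3$ vertex gives $d_{stR}(G)=1$ by \cref{thm: d str vertex of degree 3}. If $\delta(G)=1$, then $d_{stR}(G)=1$ by \cref{prop: str domatic number leq mindegree}. The $2$-regular case reduces to cycles. The only difference is that the paper cites Volkmann's result ($d_{stR}(C_n)=2$ if $4\mid n$, and $1$ otherwise), whereas you derive it from the equality part of \cref{prop: str domatic number leq mindegree}. Your derivation works, and the ``presumably'' can be dropped. From $f_1(v_{i-1})+f_1(v_{i+1})=1$ with values in $\{-1,2\}$ you get $f_1(v_{i+2})=1-f_1(v_i)$, which forces period $4$ and hence $4\mid n$. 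Conversely, the pattern $-1,-1,2,2$ together with $1-f_1$ gives a family of size $2$.

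\textbf{The hardness half.} This is a genuine gap. Everything rests on one fact you never establish: deciding $d_{stR}(G)$ is already \NP-complete on $4$-regular graphs. The paper takes this from \cite{WehrmannK26}. Your proposal offers instead:
\begin{itemize}
\item a hoped-for general hardness result, attributed speculatively to \cite{Volkmann16b}; even if it existed, it would say nothing about bounded degree;
\item an unexecuted plan to encode some structured $2$-coloring problem at degree $4$.
\end{itemize}
Without an actual reduction or the correct citation, the \NP-hardness claim is unproved in your write-up.

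\textbf{On padding.} Once hardness for $4$-regular graphs is available, no padding is needed for the statement as the paper intends it. The $4$-regular graphs already lie in the class of graphs with $\Delta(G)\ge 4$.

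If you want the stronger version, hardness for each fixed maximum degree $k\ge 4$, your padding step is unjustified as written. You claim that $d_{stR}$ of a disjoint union equals the minimum over its components. But pairwise distinct STRD functions on the union can restrict to identical functions on one component. So only $d_{stR}(G_1\cup G_2)\ge \min\{d_{stR}(G_1),d_{stR}(G_2)\}$ is immediate, not equality.

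Separately, $d_{stR}(K_{k+1})$ is only about $(k+1)/3$. For small $k$, complete graphs may therefore not have large enough $d_{stR}$ to act as neutral padding.
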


\begin{proof}
Firstly, if $\delta(G) = 1$ or $\Delta(G) = 3$, then \cref{prop: str domatic number leq mindegree} and \cref{thm: d str vertex of degree 3} yield $d_{stR}(G) = 1$, respectively.
Secondly, if $\delta(G) = \Delta(G) = 2$, every component of $G$ is a cycle. For the cycle $C_{n}$ of order $n \geq 3$, \cite[Example 7 and Corollary 11]{Volkmann16b} state that $d_{stR}(C_{n}) = 2$ if $n \equiv 0 \pmod 4$ and $d_{stR}(C_{n}) = 1$ otherwise.
Therefore, if $G$ is $2$-regular, $d_{stR}(G) = 2$ if every component is a cycle whose length is a multiple of $4$ and $d_{stR}(G) = 1$ otherwise.
Finally, in \cite{WehrmannK26} it was shown that deciding $d_{stR}(G)$ is \NP-complete, even when restricted to $4$-regular graphs $G$.
\end{proof}

\section{Signed Total Roman Domination Numbers of Complete Multipartite Graphs} \label{section: Signed Total Roman Domination Number of Complete Multipartite Graphs}

In this section we determine the signed total Roman domination number of every complete multipartite graph with at least three partite sets.
We begin by stating this main result. 

\begin{theorem}
    \label{thm: str domination number complete multipartite graphs}
    If $G = K_{p_1, \ldots, p_r}$ with $r \geq 3$ and $p_1 \geq \cdots \geq p_r \geq 1$, then $\gamma_{stR}(G)=2$, except when $G$ is one of the graphs
    \begin{itemize}
         \item $K_{p,q \times 1}$ with $p \geq 1$ and $q \geq 2$, or
        \item $K_{2,2,q \times 1}$, $K_{3,3,q \times 1}$, $K_{4,2,q \times 1}$, or $K_{4,4,q \times 1}$ with $q$ odd and $q \geq 1$ , or
        \item $K_{3,2,q \times 1}$ or $K_{4,3,q \times 1}$ with $q$ even and $q \geq 2$, or
        \item $K_{3,3,3}$,
    \end{itemize}
    in which case $\gamma_{stR}(G) = 3$.
\end{theorem}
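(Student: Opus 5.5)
Throughout, write $P_1,\ldots,P_r$ for the partite sets and $\omega=\omega(f)$. The plan is to rewrite the STRD conditions as conditions on the part sums $f(P_i)$, after which everything becomes arithmetic on these sums. For $v\in P_i$ we have $N(v)=V(G)\setminus P_i$. Hence condition (i) reads $f(P_i)\le \omega-1$ for every $i\in[r]$. Condition (ii) says that each $-1$-vertex in $P_i$ needs a $2$-vertex outside $P_i$. In particular (ii) holds automatically if $V_{2}$ meets at least two parts, or if $V_{-1}=\emptyset$.

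\textbf{Lower bound.} Summing $f(P_i)\le\omega-1$ over $i$ gives $\omega\le r(\omega-1)$, so $\omega\ge r/(r-1)>1$, hence $\gamma_{stR}(G)\ge 2$.

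\textbf{Upper bound $3$.} For weight $3$ it suffices to take parts with $f(P_i)\le 2$. Two constructions should cover everything:
\begin{itemize}
\item If $n$ is large, use one $2$ in each of two different parts and otherwise $\pm1$ values balanced to total $3$, keeping every part sum at most $2$.
\item In small cases use $f\equiv 1$ on $K_3$-like pieces, and fall back on \cref{prop: str domination number complete graph} for $K_n$.
\end{itemize}
I expect a short uniform construction to work, but it must be checked directly for small graphs such as $K_{3,3,3}$.

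\textbf{Weight $2$ in the generic case.} Here we must choose part sums $s_i=f(P_i)\le 1$ with $\sum s_i=2$, each realizable, and satisfy (ii).
\begin{itemize}
\item A part of size $p$ realizes exactly the integers in $[-p,2p]$ except $2p-1$ (a single $-1$ and $p-1$ twos would be needed otherwise, giving $2p-3$, so check this carefully). For $p=1$ the relevant values $\le 1$ are $\{-1,1\}$, and for $p=2$ they are $\{-2,0,1\}$.
\item To satisfy (ii), I would place the $2$'s in at least two parts whenever $-1$'s are used.
\item Singleton parts only contribute $\pm1$. So with $q$ singletons and the non-singleton parts, the parity of $q$ interacts with the achievable sums of the large parts; this explains the odd/even conditions in the statement.
\end{itemize}
Case split by the number $t$ of parts of size $\ge 2$.
\begin{itemize}
\item If $t\ge 3$, or $t=2$ with some part of size $\ge5$, I would build explicit families, e.g. sums $(1,1,0,\ldots)$ made from patterns like $(2,-1)$, $(2,-1,\ldots)$ and $(1,-1)$, with singletons paired as $+1,-1$.
\item The case $t=1$ is exactly $K_{p,q\times1}$ (including $K_n$). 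Here the singletons form a clique.
\end{itemize}

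\textbf{Exceptions (the main obstacle).} For each exceptional family we must show that no weight-$2$ function exists, by exhausting the feasible vectors $(s_i)$.
\begin{itemize}
\item For $K_{p,q\times1}$: each singleton has $s_i\in\{-1,1\}$. If some singleton holds a $-1$, it needs a $2$ in another part, but a $2$ can only sit in $P_1$, or else in a singleton (which would then have sum $2>1$, impossible). So every $2$ lies in $P_1$. But then a $-1$ inside $P_1$ has no $2$ outside $P_1$, so $P_1$ contains no $-1$, and its sum is at least $p>1$ once it contains a $2$ (for $p\ge1$). The remaining subcases are routine.
\item For $K_{a,b,q\times1}$ with $(a,b)$ small: list the possible sums of $P_1,P_2$ ($\le1$, realizable, consistent with (ii)). The singleton sum is $q$ taken with signs, so it has the parity of $q$. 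Show that the required total $2$ is incompatible with the parity, or with the restricted values at $p=2,3,4$ (the missing value $2p-1$ and the (ii) constraint).
\item $K_{3,3,3}$ is a finite check.
\end{itemize}
This case analysis, especially pinning down exactly which $(a,b)$ and parities fail, is where I expect the bulk of the work.
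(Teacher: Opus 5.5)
\textbf{Verdict: genuine gap.} Your framework matches the paper's: the reformulation $f(P_i)\le\omega(f)-1$, the averaging bound $\omega(f)\ge r/(r-1)$, and the split by the number $t$ of parts of size at least $2$. Your argument for $K_{p,q\times1}$ (no singleton can carry a $2$, so $P_1$ has no $-1$, so $f(P_1)\ge p\ge 2$) is essentially the paper's Case~1.

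Beyond that, the proposal announces results rather than proving them, and the tool you intend for the exceptions is wrong. A part of size $p$ with $a$ entries $-1$ and $b$ entries $2$ has sum $p-2a+b$. So the only unattainable value in $[-p,2p]$ is $1-p$, not $2p-1$; for $p=1$ the attainable set is $\{-1,1,2\}$, which contradicts your general claim. More importantly, no missing value explains the exceptional list. What is missing is the following structural step.

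\begin{itemize}
\item In a weight-$2$ function on $K_{p_1,p_2,q\times1}$, no singleton carries a $2$, since its sum would be $2>1$.
\item Each of $P_1,P_2$ carries a $-1$, since otherwise $f(P_i)\ge p_i\ge 2$.
\item By (ii), a $-1$ in $P_1$ needs a $2$ in $P_2$, and vice versa. Hence both $P_1$ and $P_2$ contain a $-1$ \emph{and} a $2$.
\item Combined with $f(P_i)\le 1$, this forces $f(P_i)=0$ when $p_i=3$ and $f(P_i)\in\{-1,1\}$ when $p_i\in\{2,4\}$. For $p_i\ge5$, both $0$ and $1$ remain possible.
\end{itemize}
Only then does comparing the parity of $f(P_1)+f(P_2)$ with that of the singleton sum ($\equiv q \pmod 2$) yield exactly the listed exceptions, as in the paper's Cases 2.1 and 2.3.

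The existence halves are also missing, and your case split for them is incomplete. The split ``$t\ge3$, or $t=2$ with a part of size $\ge5$'' omits the non-exceptional graphs with $p_1,p_2\in\{2,3,4\}$ and the other parity of $q$. Examples are $K_{3,2,1}$ (part sums $(0,1,1)$) and $K_{2,2,1,1}$ (part sums $(1,1,1,-1)$), both with $\gamma_{stR}=2$; these are the paper's Cases 2.2 and 2.4.

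For $t\ge3$ you must still ensure that $2$'s occur in at least two parts, and this genuinely constrains the sums. A size-$3$ part of sum $1$ is forced to be $\lmult -1,1,1\rmult$ and carries no $2$. That is exactly why $K_{3,3,3}$, whose sums are forced to be $\lmult 1,1,0\rmult$, fails. It is also why the paper chooses sums according to which parts have size $3$, and handles $t\ge4$ by induction on $r$, adding two parts with sums $(0,0)$ or $(-1,1)$.

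Finally, ``I expect a short uniform construction to work'' does not establish $\gamma_{stR}\le3$ for the exceptions; you need explicit weight-$3$ functions. Examples are part sums $(1,2,(s-1)\times1,(s-1)\times(-1))$ for $r=2s$ and $(0,2,s\times1,(s-1)\times(-1))$ for $r=2s+1$ in $K_{p,q\times1}$ with $p\ge2$, and $(0,1,2)$ for $K_{3,3,3}$.
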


The proof requires some preparation.

\begin{lemma}
    \label{lemma: lower bound str domination number complete multpartite graph}
    If $G$ a complete $r$-partite graph with $r \geq 2$, then $\gamma_{stR}(G) \geq 2$.
    Moreover, if $\gamma_{stR}(G) = 2$ and $P_1, \ldots, P_r$ denote the partite sets of $G$, then for any $\gamma_{stR}(G)$-function $f$ holds $f(P_i) \leq 1$ for all $i \in [r]$ and $f(P_i) = 1$ for at least two $i \in [r]$.
\end{lemma}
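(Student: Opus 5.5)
The plan is to exploit the fact that in a complete multipartite graph the neighborhood of a vertex $v \in P_i$ is exactly $V(G) \setminus P_i$. For any STRD function $f$, condition (i) therefore reads $\omega(f) - f(P_i) \geq 1$ for every $i \in [r]$ with $P_i \neq \emptyset$. Writing $a_i = f(P_i)$ and $W = \omega(f) = \sum_{i=1}^{r} a_i$, this gives
\begin{equation*}
    a_i \leq W - 1 \quad \text{for all } i \in [r].
\end{equation*}
In other words, every partite set has weight at most $W-1$. Everything in the lemma will follow from this one inequality combined with $W = \sum_i a_i$ and integrality.

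For the lower bound, I would first show $W \geq 1$. Summing any two of the inequalities gives $W - a_k \geq 1$ for each single $k$, so it suffices to control the signs of the $a_i$. Suppose, for contradiction, that $W \leq 1$. Then every $a_i \leq W-1 \leq 0$, so $W = \sum a_i \leq 0$. Now pick any $j$: the other parts sum to $W - a_j \geq 1$, so some $a_k \geq 1$ with $k \neq j$. But $a_k \leq W-1 \leq -1$, a contradiction. Hence $W \geq 2$, and in particular $\gamma_{stR}(G) \geq 2$.

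For the structural part, let $W = 2$. The inequality gives $a_i \leq 1$ for all $i$, which is the first claim. Let $t$ be the number of indices with $a_i = 1$; all others satisfy $a_i \leq 0$. Then $2 = W \leq t$, since the remaining parts contribute nonpositively. So at least two parts have $f(P_i) = 1$, which is the second claim.

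The proof should be routine, and I expect no real obstacle. The only point needing care is the degenerate reasoning in the lower-bound step. The direct argument is cleanest stated as: all $a_i \leq W-1$ and $W - a_j \geq 1$ for every $j$. If $W \leq 1$, every $a_i \leq 0$ forces $W - a_j = \sum_{k \neq j} a_k \leq 0$, a contradiction. So $W \geq 2$ immediately, and condition (ii) is not even needed.
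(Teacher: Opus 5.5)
Your proof is correct and essentially the paper's: both rest on the identity $f(N(v)) = \omega(f) - f(P_i)$ for $v \in P_i$, and your structural part is identical. The one difference is in the lower bound, where the paper sums the $r$ inequalities to obtain $(r-1)\omega(f) \geq r$, whereas you observe that $\omega(f) \leq 1$ would force every $f(P_i) \leq 0$ and hence $f(N(v)) \leq 0$. Your remark that ``summing any two of the inequalities'' yields $W - a_k \geq 1$ is spurious, since that is just the original inequality, but the clean version in your last paragraph is all you need.
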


\begin{proof}
    Let $f$ be a $\gamma_{stR}(G)$-function and let $v_i \in P_i$ for all $i \in [r]$.
    Using that $N(v_i) = (P_1 \cup \cdots \cup P_r)- P_i$ for all $i \in [r]$ we obtain
    \begin{equation*}
        r 
        \leq \sum_{i=1}^{r} f(N(v_i))
        = (r-1)\sum_{i=1}^{r} f(P_i)
        = (r-1) \omega(f).
    \end{equation*}
    Thus, $\omega(f) \geq r/(r-1) > 1$ and consequently $\gamma_{stR}(G) \geq 2$.
    Moreover, if $\gamma_{stR}(G) = 2$, then $f(P_i) = \omega(f) - f(N(v_i)) \leq 2-1$ for all $i \in [r]$.
    Combining this with $\sum_{i=1}^{r} f(P_i) = \omega(f) = 2$ we conclude $f(P_i) = 1$ for at least two $i \in [r]$.
\end{proof}

In view of \cref{lemma: lower bound str domination number complete multpartite graph} we can treat the graphs $G$ in \cref{thm: str domination number complete multipartite graphs} as follows:
Either we construct an STRD function on $G$ with weight $2$, thus proving $\gamma_{stR}(G)= 2$, or we show that weight $2$ cannot be achieved and then construct an STRD function on $G$ with weight $3$, thus proving $\gamma_{stR}(G)= 3$.
We introduce some notation to facilitate this.

Suppose that we want to construct an STRD function $f$ on a complete $r$-partite graph $G$ with partite sets $P_i$ of size $p_i$ for $i \in [r]$, and we want each $f(P_i) = \sum_{v \in P_i} f(v)$ to attain some target value $w_i \in \{-1,0,1,2\}$.
First, fix $i$ and let $(n_{-1}, n_1, n_2)$ denote the triple in row $p_i$ and column $w_i$ of \cref{table: weight assignment partition sets}.
In slight abuse of notation we write ``$f(P_i) \coloneqq w_i$'' to denote an operation where the values of $f$ on $P_i$ are set such that $n_{j}$ elements of $P_i$ are assigned value $j$, for $j \in \{-1,1,2\}$; after performing this operation, $f(P_i) = w_i$.
Second, we write ``$f(P_1, \ldots, P_r) \coloneqq (w_1, \ldots, w_r)$'' to simultaneously set $f(P_i) \coloneqq w_i$ for all $i \in [r]$; after performing this operation, $\omega(f) = \sum_{i=1}^{r} w_i$.

\begin{table}[h]
    \small
	\centering
	\begin{tabular}{c l | cccc }
		\toprule
         	&&\multicolumn{4}{c}{$w$}\\
		 && $-1$ & $0$ & $1$ & $2$ \\
		\midrule
		\multirow{ 7}{*}{$p$} & $1$ & $(1,0,0)$ & - & $(0,1,0)$ & $(0,0,1)$ \\
        		&$2$  & - & $(1,1,0)$ & $(1,0,1)$ & $(0,2,0)$ \\
        		&$3$  & $(2,1,0)$ & $(2,0,1)$ & $(1,2,0)$& $(1,1,1)$ \\
        		&$4$ & $(3,0,1)$ & $(2,2,0)$ & $(2,1,1)$ & $(2,0,2)$ \\
        		&$5$ & $(3,2,0)$ & $(3,1,1)$ & $(3,0,2)$ & $(2,2,1)$ \\
        		&$6+2t$& $(t+4,t+1,1)$ & $(t+4,t,2)$ & $(t+3,t+2,1)$ & $(t+3,t+1,2)$ \\
        		&$7+2t$& $(t+5,t,2)$ & $(t+4,t+2,1)$ & $(t+4,t+1,2)$ & $(t+4,t,3)$ \\
		\bottomrule
	\end{tabular}
	\caption{ The triple $(n_{-1}, n_{1}, n_{2})$ in row $p$ and column $w$ fulfills $n_{-1} + n_{1} + n_{2} = p$ and $-n_{-1} + n_{1} + 2n_{2} = w$.
    }
    \label{table: weight assignment partition sets}
\end{table}

Now we are prepared to tackle the proof of \cref{thm: str domination number complete multipartite graphs}.

\begin{proof}[Proof of \cref{thm: str domination number complete multipartite graphs}]
    Let $G=K_{p_1, \ldots, p_r}$ with $r \geq 3$ and $p_1 \geq \cdots \geq p_r$, and let $P_1, \ldots, P_r$ denote the partite sets of $G$ with $\vert P_i \vert = p_i$ for all $i \in [r]$.
    The proof breaks down into a case distinction over the number of partite sets of size at least $2$ and, if necessary, further distinction of certain values of $r$ and the $p_i$.

    \case{1}{$p_1 \geq 1$ and $p_i = 1$ for $2 \leq i \leq r$.}

	Note that $G = K_{p,q \times 1}$ with $p = p_1$ and $q = r-1$.
    We claim $\gamma_{stR}(G) = 3$.
    If $p_1 = 1$, then $G = K_r$ and $\gamma_{stR}(G) = 3$ by Proposition \ref{prop: str domination number complete graph}.
    From now on we assume $p_1 \geq 2$.
    Suppose, for the sake of contradiction, that $f$ is an STRD function on $G$ with $\omega(f) = 2$.
    By \cref{lemma: lower bound str domination number complete multpartite graph}, $f(P_i) \leq 1$ for all $i \in [r]$.
    Thus, $f(v) = -1$ for some $v \in P_1$ and thus $f(u) = 2$ for some neighbor $u$ of $v$, where $u \in P_i$ for some $i \in \{2, \ldots, r\}$;
    but $\vert P_i \vert = 1$ and $f(P_i)\leq 1$ yields the contradiction $2 = f(u) = f(P_i) \leq 1$.
    Hence, $\gamma_{stR}(G) \geq 3$, and in the following we define an STRD function $f$ on $G$ with $\omega(f) = 3$.
    If $r$ is even, say $r = 2s$ for an integer $s \geq 2$, then we set 
    \begin{equation*}
        f(P_1, \ldots, P_r) \coloneqq (1,2,(s-1)\times 1, (s-1)\times(-1)),
    \end{equation*}
    and if $r$ is odd, say $r = 2s+1$ for an integer $s \geq 1$, then we set
    \begin{equation*}
        f(P_1, \ldots, P_r) \coloneqq (0,2, s \times 1, (s-1) \times (-1)).
    \end{equation*}
    It remains to verify the correctness of the function $f$, which we do in three steps: 
    First, $\omega(f) = 3$ clearly holds.
    Second, for all $i \in [r]$ and $v \in P_i$, we have $f(N(v)) = \omega(f) - f(P_i) \geq 1$.
    Finally, with the help of \cref{table: weight assignment partition sets} it is readily verified that, for all $i \in [r]$ and $v \in P_i$, if $f(v) = -1$ then $f(u) = 2$ for some $u \in N(v) = V(G)- P_i$.
    Therefore, $f$ is indeed an STRD function on $G$ with the desired weight.
    In all following cases we perform this verification process without explicit mention.

    \case{2}{$p_1, p_2 \geq 2$ and $p_i = 1$ for $3 \leq i \leq r$.}

    Note that $G = K_{p_1,p_2,q \times 1}$ with $q = r-2$.
    
    \case{2.1}{$(p_1, p_2) \in \{(3,2), (4,3)\}$ and $r = 2s$ for an integer $s \geq 2$.}

    We claim that $\gamma_{stR}(G) = 3$.
    Since $K_{p_1, p_2, q \times 1} \cong K_{p_2, p_1, q \times 1}$, without loss of generality, we can assume $p_1 \in \{2,4\}$ and $p_2 = 3$.
    Suppose, for the sake of contradiction, that $f$ is an STRD function on $G$ with $\omega(f) = 2$.
    By \cref{lemma: lower bound str domination number complete multpartite graph}, $f(P_i) \leq 1$ for all $i \in [r]$.
    Thus, $f(v_1) = f(v_2) = -1$ for some $v_1 \in P_1$ and $v_2 \in P_2$, while $f(P_i) \in \{-1,1\}$ for all $i \in \{3, \ldots, r\}$.
    From the former we deduce $f(u_2) = 2$ for some $u_2 \in N(v_1)$, so the latter yields $u_2 \in P_2$; analogously we deduce $f(u_1) = 2$ for some $u_1 \in  P_1$.
    Now, from $|P_2| = 3$, $f(P_2) \leq 1$, and $f(v_2) = -1$ and $f(u_2) = 2$ for $v_2,u_2 \in P_2$ we conclude $f(P_2) = 0$; analogously, from $|P_1| \in \{2,4\}$, $f(P_1) \leq 1$, and $f(v_1) = -1$ and $f(u_1) = 2$ for $v_1,u_1 \in P_1$ we conclude $f(P_1) \in \{-1,1\}$.
    It follows that $\sum_{i=3}^{r} f(P_i) = \omega(f) - f(P_1) - f(P_2) \in \{1,3\}$.
    But this contradicts the fact that $\sum_{i=3}^{r} f(P_i)$ must be even, since $f(P_i) \in \{-1, 1\}$ for $i \in \{3,\ldots, r\}$ and $r$ is even.
    Hence, $\gamma_{stR}(G) \geq 3$, and we obtain an STRD function $f$ on $G$ with $\omega(f) = 3$ by setting
    \begin{equation*}
        f(P_1, \ldots, P_r) \coloneqq (1,2,(s-1) \times 1, (s-1) \times (-1)).
    \end{equation*}

    \case{2.2}{$(p_1, p_2) \notin \{(3,2), (4,3)\}$ and $r = 2s$ for an integer $s \geq 2$.}

	We claim that $\gamma_{stR}(G) = 2$.
    To show this, we define an STRD function $f$ on $G$ with $\omega(f) = 2$.
    If $p_1 = 3$ or $p_2 = 3$, then $p_1,p_2 \notin \{2,4\}$ and we set
    \begin{equation*}
        f(P_1, \ldots, P_r) \coloneqq (0,0,s \times 1, (s-2) \times (-1)).
    \end{equation*}
    Otherwise, $p_1, p_2 \neq 3$ and we set
    \begin{equation*}
        f(P_1, \ldots, P_r) \coloneqq ((s+1) \times 1, (s-1) \times (-1)).
    \end{equation*}

    \case{2.3}{$(p_1,p_2) \in \{(2,2), (3,3), (4,2), (4,4)\}$ and $r = 2s+1$ for an integer $s \geq 1$.}
    
    We claim that $\gamma_{stR}(G) = 3$.
    Suppose, for the sake of contradiction, that $f$ is an STRD function on $G$ with $\omega(f) = 2$.
    Analogous to Case 2.1 one can show the following; we omit the details.
    First, one can show that $f(P_1), f(P_2) \leq 1$ and $f(P_i) \in \{-1,1\}$ for $i \in \{3,\ldots, r\}$.
    One concludes that $(f(P_1), f(P_2)) \in \{(1,1), (1,-1), (-1,1), (-1,-1), (0,0)\}$ and therefore $\sum_{i=3}^{r} f(P_i) = \omega(f) - f(P_1) - f(P_2) \in \{0,2,4\}$.
    But this contradicts the fact that $\sum_{i=3}^{r} f(P_i)$ must be odd, since $f(P_i) \in \{-1,1\}$ for $i \in \{3,\ldots, r\}$ and $r$ is odd.
    Hence, $\gamma_{stR}(G) \geq 3$, and in the following we define an STRD function $f$ on $G$ with $\omega(f) = 3$.
    If $(p_1,p_2) \in \{(2,2), (4,2), (4,4)\}$, then we set 
    \begin{equation*}
        f(P_1, \ldots, P_r) \coloneqq ((s+2) \times 1, (s-1) \times (-1)).
    \end{equation*}
    Otherwise, $(p_1,p_2) = (3,3)$ and we set 
    \begin{equation*}
        f(P_1, \ldots, P_r) \coloneqq (2,2,(s-1) \times 1, s \times (-1)).
    \end{equation*}

    \case{2.4}{$(p_1,p_2) \notin \{(2,2), (3,3), (4,2), (4,4)\}$ and $r = 2s+1$ for an integer $s \geq 1$.}
    
    We claim that $\gamma_{stR}(G) = 2$.
    Since $K_{p_1, p_2, q \times 1} \cong K_{p_2, p_1, q \times 1}$, without loss of generality, we can assume $p_1 \notin \{2,4\}$ and $p_2 \neq 3$.
    We define an STRD function $f$ on $G$ with $\omega(f) = 2$ by setting
    \begin{equation*}
        f(P_1, \ldots, P_r) \coloneqq (0,(s+1) \times 1, (s-1) \times (-1)).
    \end{equation*}    

    \case{3}{$p_1, p_2, p_3 \geq 2$ and $p_i = 1$ for $4 \leq i \leq r$.}

    \case{3.1}{$(p_1, p_2, p_3) = (3,3,3)$ and $r = 3$.}

    Note that $G = K_{3,3,3}$.
    We claim $\gamma_{stR}(G) = 3$.
	Suppose, for the sake of contradiction, that $f$ is an STRD function on $G$ with $\omega(f) = 2$.
    From \cref{lemma: lower bound str domination number complete multpartite graph} we get $\lmult f(P_1), f(P_2), f(P_3)\rmult = \lmult 1,1,0\rmult$, say, without loss of generality, $f(P_1) = f(P_2) = 1$ and $f(P_3) = 0$.
    Thus, $\lmult f(v) \mid v \in P_1\rmult = \lmult f(v) \mid v \in P_2\rmult = \lmult 1,1,-1\rmult$ and $\lmult f(v) \mid v \in P_3\rmult = \lmult 2,-1,-1\rmult$.
    Thus, we have the contradiction that $f(v) = -1$ for some $v \in P_3$, but $f(u) \neq 2$ for all $u \in N(v) = P_1 \cup P_2$.
    Hence, $\gamma_{stR}(G) \geq 3$, and we construct an STRD function $f$ on $G$ with $\omega(f) = 3$ by setting
    \begin{equation*}
        f(P_1, P_2, P_3) \coloneqq (0,1,2).
    \end{equation*}

    \case{3.2}{$(p_1, p_2, p_3) = (3,3,3)$ and $r \geq 4$.}

    We claim that $\gamma_{stR}(G) = 2$ and construct an STRD function $f$ on $G$ with $\omega(f) = 2$ as proof.
    If $r$ is even, say $r = 2s$ for an integer $s \geq 2$, we set
    \begin{equation*}
        f(P_1, \ldots, P_r) \coloneqq (2 \times 0,s \times 1, (s-2) \times (-1)).
    \end{equation*} 
    Otherwise, $r$ is odd, say $r = 2s + 1$ for an integer $s \geq 2$, and we set
    \begin{equation*}
        f(P_1, \ldots, P_r) \coloneqq (3 \times 0,s \times 1, (s-2) \times (-1)).
    \end{equation*} 
    
    \case{3.3}{$(p_1, p_2, p_3) \neq (3,3,3)$.}
    
    We claim that $\gamma_{stR}(G) = 2$ and construct an STRD function $f$ on $G$ with $\omega(f) = 2$ as proof.
    Since $K_{p_1, p_2, p_3, p_4, \ldots, p_r} \cong K_{p_{\pi(1)}, p_{\pi(2)}, p_{\pi(3)},p_4, \ldots, p_r}$ for any permutation $\pi$ of $\{1,2,3\}$, we can assume, without loss of generality, that either $p_1 = 3$ and $p_2 \neq 3$, or $p_1, p_2, p_3 \neq 3$.
    First, assume that $r$ is even, say $r = 2s$ for an integer $s \geq 2$.
    If $p_1 = 3$ and $p_2 \neq 3$ we set 
    \begin{equation*}
        f(P_1, \ldots, P_r) \coloneqq (0,1,0,(s-1) \times 1, (s-2) \times (-1)),
    \end{equation*} 
    and if $p_1, p_2, p_3 \neq 3$ we set
    \begin{equation*}
        f(P_1, \ldots, P_r) \coloneqq ((s+1) \times 1, (s-1) \times (-1)).
    \end{equation*} 
    Now assume that $r$ is odd, say $r = 2s + 1$ for an integer $s \geq 1$. 
    Still assuming that $p_1 = 3$ and $p_2 \neq 3$, or that $p_1, p_2, p_3 \neq 3$ holds, we set
    \begin{equation*}
        f(P_1, \ldots, P_r) \coloneqq (0,(s+1) \times 1, (s-1) \times (-1)).
    \end{equation*}    

    \case{4}{$p_1, p_2, p_3, p_4 \geq 2$ and $r \geq 4$.}
    
    We claim that $\gamma_{stR}(G) = 2$ and proceed with induction on $r$ to construct an STRD function $f$ on $G$ with $\omega(f) = 2$.
    For the inductive base we consider $r = 4$ and $r = 5$, and begin by letting $r = 4$.
    If $\lmult p_1, p_2, p_3, p_4 \rmult$ contains two elements $3$, say $p_1 = p_2 = 3$, then we set
    \begin{equation*}
        f(P_1, \ldots, P_4) \coloneqq (0,0,1,1).
    \end{equation*}
    Otherwise, $\lmult p_1, p_2, p_3, p_4 \rmult$ contains two elements different from $3$, say $p_1 ,p_2 \neq 3$, and we set
    \begin{equation*}
        f(P_1, \ldots, P_4) \coloneqq (1,1,0,0).
    \end{equation*}
    
    Next let $r = 5$.
    If $\lmult p_1, p_2, p_3 \rmult$ contains two elements different from $2$ and $4$, we set
    \begin{equation*}
        f(P_1, \ldots, P_5) \coloneqq (0,0,0,1,1).
    \end{equation*}
    Otherwise, $\lmult p_1, p_2, p_3 \rmult$ contains two elements equal to $2$ or $4$, say $p_1,p_2 \in \{2,4\}$.
    Either $p_5 = 1$ and we set  
    \begin{equation*}
        f(P_1, \ldots, P_5) \coloneqq (1,1,1,0,-1),
    \end{equation*}
    or $p_5 \neq 1$ and we set
    \begin{equation*}
        f(P_1, \ldots, P_5) \coloneqq (1,1,0,0,0).
    \end{equation*}
    Note that for all functions $f$ defined above, $f(u^\ast) = 2$ for some $u^\ast \in P_1 \cup P_2 \cup  \cdots \cup P_r$.

    Finally, let $r \geq 6$ and let $H  = G[ P_1 \cup P_2 \cup \cdots \cup P_{r-2}] = K_{p_1, \ldots, p_{r-2}}$.
    As inductive hypothesis we assume $g$ to be an STRD function on $H$ with $\omega(g) = 2$ and with $g(u^\ast) = 2$ for some $u^\ast \in V(H)$. 
    For the inductive step we construct an STRD function $f$ on $G$ with $\omega(f) = 2$  and with $f(u^\ast) = 2$ for some $u^\ast \in V(G)$. 
    First we set $f(v) = g(v)$ for all $v \in V(H) = P_1 \cup P_2 \cup \cdots \cup P_{r-2}$, so in particular $f(u^\ast) = g(u^\ast) = 2$ for some $u^\ast \in V(H) \subseteq V(G)$; we fix $u^\ast$ for later reference.
    Next we define the values of $f$ on $P_{r-1} \cup P_{r}$.
    If $p_{r-1} = p_{r} = 2$, we set $f(P_{r-1}, P_{r}) \coloneqq (0,0)$.
    Otherwise, $p_{r-1} \neq 2$ or $p_{r} \neq 2$, say $p_{r-1} \neq 2$, and we set $f(P_{r-1}, P_{r}) \coloneqq (-1,1)$.
    Thus, $\omega(f) = \omega(g) + f(P_{r-1} \cup P_{r}) = 2 + 0$ and it remains to show that $f$ is an STRD function on $G$.
    First, let $v \in P_1 \cup P_2 \cup \cdots \cup P_{r-2}$.
    We have $f(N_{G}(v)) = g(N_{H}(v)) + f(P_{r-1} \cup P_{r}) \geq 1 + 0$.
    Further, if $f(v) = -1$, then $g(v) = -1$, which implies $g(u) = 2$ and thus $f(u) = 2$ for some $u \in N_H(v) \subseteq N_G(v)$.
    Next, let $v \in P_{r-1}$. 
    We have $f(N_{G}(v)) = \omega(g) + f(P_{r}) \geq 2 - 1$.
    Further, $f(u^\ast) = 2$ and $u^\ast \in N_{G}(v)$.
    The case where $v \in P_{r}$ is treated analogously. 
    Therefore, $f$ is indeed an STRD function on $G$.\smallskip
    
    We have considered all possible cases, so the proof of the theorem is complete.
\end{proof}

\section{Signed Total Roman Domatic Numbers of Complete Multipartite Graphs}
\label{section: Signed Total Roman Domatic Number of some Complete Multipartite Graphs}

In this section, we determine $d_{stR}(G)$ when $G$ is a complete multipartite graph with at least three partite sets and $\gamma_{stR}(G) = 3$, that is, when $G$ is one of the exceptions in \cref{thm: str domination number complete multipartite graphs}.
Note that the special case $G = K_{n}$ is already covered by \cref{prop: str domatic number complete graph}.

We state the main results and then lay out a plan to prove them.

\begin{restatable}{theorem}{FirstTheoremDSTR}
    \label{thm: d str K 3 3 3}
    $d_{stR}(K_{3,3,3}) = 1$
\end{restatable}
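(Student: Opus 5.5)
We argue that $d_{stR}(K_{3,3,3}) \le 1$. The reverse inequality $d_{stR} \ge 1$ holds because $\delta \ge 1$. Let $G = K_{3,3,3}$ with partite sets $P_1,P_2,P_3$, so $n = 9$ and $\gamma_{stR}(G) = 3$ by \cref{thm: str domination number complete multipartite graphs}. By \cref{prop: str domination number times str domatic number leq order} we get $d_{stR}(G) \le 3$. It remains to exclude $d_{stR}(G) = 3$ and $d_{stR}(G) = 2$.

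\emph{Excluding $3$.} If $\{f_1,f_2,f_3\}$ is an STRD family, then $\gamma_{stR}(G)\cdot d_{stR}(G) = 9 = n$. The equality part of \cref{prop: str domination number times str domatic number leq order} then gives $\sum_{i=1}^3 f_i(v) = 1$ for every $v$. The only multiset of three values from $\{-1,1,2\}$ with sum $1$ is $\lmult -1,1,1\rmult$. Indeed, a $2$ would have to be offset by $-2$ from two entries in $\{-1,1,2\}$, and the possible pair sums $-2$ (from $-1,-1$), $0$, $1$, $2$, $3$, $4$ give totals $0,2,3,4,5,6$, never $1$. Hence no $f_i$ takes the value $2$ anywhere. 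However, every vertex receives $-1$ under some $f_i$, and such a vertex must then have a neighbor of value $2$ under $f_i$, which is a contradiction.

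\emph{Excluding $2$.} Suppose $\{f_1,f_2\}$ is an STRD family. Since $f_1(v)+f_2(v)\le 1$ and both values lie in $\{-1,1,2\}$, for every $v$ at least one of $f_1(v), f_2(v)$ equals $-1$. Therefore $|V_{-1}^{f_1}| + |V_{-1}^{f_2}| \ge 9$. I would show that every STRD function $f$ on $G$ has at most four vertices of value $-1$, which gives $8 < 9$, a contradiction.

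\emph{Key lemma: $k \le 4$.} Write $m, t, k$ for the numbers of vertices with values $2, 1, -1$. Then $m+t+k = 9$ and $2m + t - k = \omega(f) \ge 3$. Eliminating $t$ gives $m \ge 2k - 6$, and combining with $m \le 9 - k$ yields $k \le 5$.

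For $k = 5$ these relations force $m = 4$, $t = 0$ and $\omega(f) = 3$. For each part we have $f(P_i) = \omega(f) - f(N(v))$ with $v \in P_i$, so $f(P_i) \le \omega(f) - 1 = 2$. But four vertices of value $2$ spread over three parts of size $3$ put two of them in one part. That part has sum at least $2+2-1 = 3 > 2$, a contradiction. Hence $k \le 4$.

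This counting lemma is the only step requiring care, and it rests on the identity $f(N(v)) = \omega(f) - f(P_i)$ for $v\in P_i$. With both cases excluded, $d_{stR}(K_{3,3,3}) = 1$.
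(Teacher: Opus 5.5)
Your proof is correct and has the same structure as the paper's: first an upper bound from the general propositions, then you rule out a family $\{f_1,f_2\}$ because every vertex is $-1$ under $f_1$ or $f_2$, while no STRD function on $K_{3,3,3}$ can have five $-1$'s. The differences are local. You dispose of $d_{stR}=3$ via the equality case of \cref{prop: str domination number times str domatic number leq order} instead of \cref{prop: str domatic number when str domination number geq n/3}. You also bound the number of $-1$'s by weight counting plus pigeonhole on the $2$'s. The paper instead observes directly that four of the five $-1$'s lie in two parts, which forces $f_1(N(v)) \le 0$ for $v$ in the third part; that step does not even need $\gamma_{stR}=3$.
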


\begin{restatable}{theorem}{SecondTheoremDSTR}
    \label{thm: d str K 2 2 q times 1}
    If either $(p_{1},p_{2}) \in \{(2,2),(3,3),(4,2),(4,4)\}$ and $q$ is odd with $q \geq 1$, or $(p_{1},p_{2}) \in \{(3,2),(4,3)\}$ and $q$ is even with $q \geq 2$, then $d_{stR}(K_{p_{1},p_{2},q \times 1})= \lfloor (p_1+p_2+q)/3 \rfloor$, except for the case $p_1+p_2+q \in \{7,9,11\}$, where $d_{stR}(K_{p_{1},p_{2},q \times 1})= \lfloor (p_1+p_2+q)/3 \rfloor - 1$.
\end{restatable}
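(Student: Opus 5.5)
The plan is to prove matching upper and lower bounds, writing $n = p_1+p_2+q$ and $G = K_{p_1,p_2,q\times 1}$ with partite sets $P_1,P_2,P_3,\ldots,P_{q+2}$.

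\textbf{Upper bound.} \cref{thm: str domination number complete multipartite graphs} gives $\gamma_{stR}(G)=3$ for every graph in the statement. \cref{prop: str domination number times str domatic number leq order} then yields $d_{stR}(G)\le \lfloor n/3\rfloor$.

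\textbf{Exceptional orders.} For $n\in\{7,9,11\}$ the upper bound must be lowered by one, and I would do this by contradiction.
\begin{itemize}
\item $n=9$: suppose a family of $3$ functions exists. The equality part of \cref{prop: str domination number times str domatic number leq order} forces every $f_i$ to have weight $3$ and every column sum $\sum_i f_i(v)$ to equal $1$.
\item $n=7$: a family of $2$ functions means each vertex carries $-1$ in at least one function.
\item $n=11$: a family of $3$ functions has total weight at most $11$, so at most two units of slack are spread over the rows and columns.
\end{itemize}
In each case I would encode the family as a $d\times n$ matrix $M$ with entries in $\{-1,1,2\}$. Writing $\omega_i=|M|^R_i$ for the weight of $f_i$, the constraints are:
\begin{itemize}
\item column sums satisfy $|M|^C_v\le 1$;
\item for every row $i$ and partite set $P_j$, $f_i(P_j)\le \omega_i-1$, since $f_i(N(v))=\omega_i-f_i(P_j)$ for $v\in P_j$;
\item every $-1$ in row $i$ lying in $P_j$ has a $2$ in row $i$ outside $P_j$.
\end{itemize}
The singleton parts are the key restriction. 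For a singleton $P_j=\{v\}$, the first constraint gives $f_i(v)\le\omega_i-1$, and with $\omega_i$ near $3$ this bounds how many $2$'s can sit in singleton columns.

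I would then count, over all rows, the number of $2$'s and $-1$'s, using a column-sum identity: a column with sum $\le 1$ over $d$ rows whose entries include a $2$ must contain at least $\lceil (d+1)/2 \rceil$ entries $-1$ or so. Combining this with the row-weight bound $\omega_i\ge 3$ and the parity/size restrictions on the two large parts, as in Cases 2.1 and 2.3 of the previous proof, should give a contradiction. This mirrors the argument of \cite{GuoVW25} for $K_n$ with $n\in\{7,9,11\}$, and there are only finitely many graphs to check:
\begin{itemize}
\item $n=7$: $K_{2,2,3\times1}$ and $K_{3,2,2\times1}$;
\item $n=9$: $K_{2,2,5\times1}$, $K_{3,3,3\times1}$, $K_{4,2,3\times1}$, $K_{3,2,4\times1}$ and $K_{4,3,2\times1}$;
\item $n=11$: the analogous graphs.
\end{itemize}

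\textbf{Lower bound.} I would give explicit families of size $\lfloor n/3\rfloor$ (respectively $\lfloor n/3\rfloor-1$ for the exceptional orders), again as matrices $M$. The natural strategy is to split $V(G)$ into about $n/3$ blocks of roughly three vertices and give $f_i$ the pattern $(2,2,-1)$, or similar, on block $i$ and a compensating pattern elsewhere so that all column sums are $\le1$. I would arrange $P_1$ and $P_2$ to lie inside blocks so that no part receives total weight $\omega_i$ in row $i$, and verify each $-1$ sees a $2$ in another part. For large $n$ I would use induction by adding three singleton parts (which keeps the parity condition on $q$ unchanged, though the parity of the number of rows changes) and extending each existing function and adding one new function, mimicking the inductive step of Case 4. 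A finite list of small base cases would be handled directly.

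\textbf{Main obstacle.} I expect the hardest part to be the nonexistence arguments for $n\in\{7,9,11\}$, especially $n=11$, where there is slack and the counting does not close by equality. There I would first try to show that some row must have weight exactly $3$ with every $2$ in a singleton part, and derive a contradiction from the $-1$ entries in $P_1\cup P_2$. If that fails, I would fall back to a case enumeration of the possible row-weight vectors.
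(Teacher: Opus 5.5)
\textbf{The gap is in the lower bound for general $q$: the inductive step you propose cannot work.} Your upper bound is right, and your plan for the exceptional orders is workable.

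First, adding three singleton parts does not preserve the parity of $q$. It sends, e.g., $K_{2,2,q\times 1}$ with $q$ odd to $K_{2,2,(q+3)\times 1}$, which has $\gamma_{stR}=2$ and lies outside the theorem.

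More seriously, the step fails even ignoring parity. Start from a family $\{f_1,\ldots,f_d\}$ with $d=\lfloor n/3\rfloor$ on a graph of the theorem. Keep each $f_i$ unchanged on the old vertices, as ``extending'' suggests, and add a single new function $g$. Put $s(v)=\sum_{i=1}^{d}f_i(v)\le 1$. Since every $\omega(f_i)\ge 3$, you get $\sum_v s(v)\ge 3d\ge n-2$. So all but at most two old vertices have $s(v)=1$, and each of these forces $g(v)=-1$. Hence $\omega(g)\le -(n-2)+2\cdot 2+3\cdot 2=12-n$, which is below $2\le\gamma_{stR}$ of the new graph for every $n\ge 11$.

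Two further points do not rescue it. Case 4 of the proof of \cref{thm: str domination number complete multipartite graphs} builds a single STRD function with no column constraints, so it gives no template for families. Your ``$(2,2,-1)$ on block $i$ plus a compensating pattern'' idea is never made concrete, so it does not supply base cases either.

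\textbf{The missing idea is to add new functions in pairs whose values cancel on every old vertex.} The paper steps from $q$ to $q+6$.
\begin{itemize}
\item The old functions are padded with $\zero_{d\times 6}$ on the six new vertices. Its rows sum to $0$, so the old functions keep their weights and remain STRD functions. Its columns sum to $0$, which leaves the new columns free.
\item The two new functions take the values $1$ and $-1$, in some order, on each old vertex, so old column sums do not change.
\item On the new vertices they use two $\one_{2\times 2}$ blocks and two columns $(-1,2)^T$. Each new function then has weight $3$ and every new column sums to $1$.
\end{itemize}
This preserves the parity of $q$ and raises $\lfloor n/3\rfloor$ by exactly $2$. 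The base cases ($q\in\{9,11,13\}$ for $(p_1,p_2)=(2,2)$, analogously for the other pairs) are found with BIP1.

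\textbf{The exceptional orders need none of the partite structure you plan to use.}
\begin{itemize}
\item For $n=7$, one of two functions has at least four entries $-1$ and hence weight at most $2$ (\cref{lemma: str domatic number when gamma str approx n/2}).
\item For $n\in\{9,11\}$, each of three functions contains a $2$. A column containing a $2$ has its other two entries equal to $-1$, so three columns sum to $0$ and the total weight is at most $n-3<9$ (\cref{prop: str domatic number when str domination number geq n/3}). This is easy, not the hard case you expected $n=11$ to be.
\item For $(2,2)$, an alternating $(-1,2)/(2,-1)$ pair gives the matching family of size $2$.
\end{itemize}
Your lists for $n=7$ and $n=9$ also omit $K_{3,3,1}$, $K_{4,2,1}$ and $K_{4,4,1}$.
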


\begin{restatable}{theorem}{ThirdTheoremDSTR}
    \label{thm: d str K p q times 1}
    If $p$ and $q$ are integers with $p,q \geq 2$, then $d_{stR}(K_{p,q\times 1}) = \min\{q, \lfloor (p+q)/3 \rfloor\}$, except for the following cases:
    \begin{enumerate}[\normalfont (i)]
        \item If $q = 3$ or $p+q = 7$, then $d_{stR}(K_{p,q\times 1}) = 1$.
        \item If $q \neq 3$ and $p+q \in \{9,10,11\}$ or $(p,q) \in \{(7,5), (8,5), (9,5)\}$, then $d_{stR}(K_{p,q\times 1}) = 2$.
        \item If $(p,q) \in \{(11,7), (12,7), (13,7)\}$, then $d_{stR}(K_{p,q\times 1}) = 5$.
    \end{enumerate}
\end{restatable}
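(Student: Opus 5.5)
The statement immediately above is \cref{thm: d str K p q times 1}, which concerns $G=K_{p,q\times 1}$ with $p,q\ge 2$. Write $P$ for the independent set of size $p$ and $Q$ for the clique of size $q$, and set $n=p+q$. I would prove matching upper and lower bounds, using the structure of STRD families.

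\emph{Upper bounds.} Every vertex of $P$ has degree $q$, so \cref{prop: str domatic number leq mindegree} gives $d_{stR}(G)\le q$. Since $\gamma_{stR}(G)=3$ by \cref{thm: str domination number complete multipartite graphs} (Case 1), \cref{prop: str domination number times str domatic number leq order} gives $d_{stR}(G)\le\lfloor n/3\rfloor$. Together these yield the generic bound $\min\{q,\lfloor n/3\rfloor\}$.

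\emph{Exceptional upper bounds.} For $q=3$, the vertices of $P$ have degree $3$, so \cref{thm: d str vertex of degree 3} gives $d_{stR}(G)=1$. For the remaining exceptions I would mimic the counting argument behind \cref{prop: str domatic number complete graph} for $K_n$. Let $\{f_1,\dots,f_d\}$ be an STRD family. Each $f_i$ satisfies $f_i(N(v))\ge1$ for all $v$, so $\omega(f_i)\ge 1+\max_{v\in Q} f_i(v)$ and also $\omega(f_i)-f_i(P)\ge1$. In addition, each $f_i$ with a $-1$ in $Q$ or in $P$ needs a $2$ in the right place: a $-1$ in $P$ needs a $2$ in $Q$, while a $-1$ in $Q$ needs a $2$ anywhere else. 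This forces $\omega(f_i)\ge3$, and $\omega(f_i)\ge4$ unless $f_i$ has a very restricted shape; for instance, $\omega(f_i)=3$ with a $2$ in $Q$ forces the pattern to be tight. Summing gives
\[
\sum_i \omega(f_i)\le n,\qquad \sum_i f_i(P)\le p,\qquad \sum_i f_i(v)\le 1 \text{ for each } v\in Q .
\]
I would then classify the weight-$3$ STRD functions of $K_{p,q\times1}$ by the pair (value of $f_i(P)$, multiset of values on $Q$). Next I would count, for each $v\in Q$, how many functions can place a $2$ there: each $2$ must be compensated by enough $-1$'s so that the column sum $\sum_i f_i(v)$ stays at most $1$. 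This double counting over the rows (the functions) and the columns (the vertices) is where the matrix notation $|M|^R_i$, $|M|^C_j$ is meant to be used. Parity and the small totals should rule out $d=\lfloor n/3\rfloor$ when $n\in\{7,9,10,11\}$, reducing the bound to $2$ or to $1$ (for $n=7$), and rule out $d=5$ in the cases $(p,q)\in\{(7,5),(8,5),(9,5),(11,7),(12,7),(13,7)\}$. In the cases $q=5$ and $q=7$, the bound $d=q$ is excluded using the equality statement of \cref{prop: str domatic number leq mindegree}: every $f_i$ sums to exactly $1$ on $Q$, and every column sum over $Q$ equals $1$. Such a rigid system should be infeasible with the needed $2$'s, and I would push this argument down to the stated values.

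\emph{Constructions.} I would encode a family as a $d\times n$ matrix $M$ with entries in $\{-1,1,2\}$, where each row is an STRD function and each column has sum at most $1$. The plan is to build explicit base matrices for small $q$ and for the exceptional small cases, and then grow them. Adding three vertices to $Q$, or suitable vertices to $P$, should allow one more row to be appended, similar to the known construction for $K_n$ in \cite{GuoVW25}. Alternatively, a block-diagonal type combination of a family for $K_{p',q'\times1}$ with one for $K_{q''}$ could work, if the blocks can be glued so that each row still dominates the whole clique. When $q\le\lfloor n/3\rfloor$, with $p$ large, the rows can differ only on $Q$ together with a few vertices of $P$, and the remaining vertices of $P$ are filled with $-1,1$ to keep each column sum at most $1$.

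The main obstacle will be the exceptional upper bounds, above all showing $d\le5$ for $(p,q)\in\{(11,7),(12,7),(13,7)\}$ and $d\le2$ for $q=5$ with $p\in\{7,8,9\}$. These need a careful case analysis of weight-$3$ and weight-$4$ rows under the column constraints, and I would first try to derive a linear inequality in the numbers of rows of each type that becomes infeasible exactly in those cases.
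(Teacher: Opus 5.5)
You handle the generic bound $\min\{q,\lfloor n/3\rfloor\}$ and the case $q=3$ correctly. The exceptional upper bounds, however, are only announced, and your plan aims at the wrong targets. In all six cases $(p,q)\in\{(7,5),(8,5),(9,5),(11,7),(12,7),(13,7)\}$ one has $\lfloor n/3\rfloor<q$, so the degree bound and the equality clause of \cref{prop: str domatic number leq mindegree} never come into play. What must be excluded is $d\in\{3,4\}$ for $q=5$ and $d=6$ for $q=7$. Ruling out $d=5$ for $q=7$, as you write at one point, would contradict the statement.

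The missing idea is \cref{lemma: 4 times 5 and 6 times 7 matrix one prime does not exist}. For a family of size $d=q-1$, the matrix $(f_i(w))_{i\in[d],\,w\in Q}$ has row sums $f_i(Q)=f_i(N(v))\ge 1$ for $v\in P$ and column sums at most $1$. By the lemma (proved by counting entries $1$ for $k=4$ and entries $2$ for $k=6$ via \cref{table: possible triples}), some row, say $f_1$, has no entry $2$ on $Q$. Since every $-1$ on $P$ needs a $2$ on $Q$, $f_1\ge 1$ on $P$, so $\omega(f_1)\ge p+1$, and $p+1+3(d-1)\le\sum_i\omega(f_i)\le n$ fails. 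The obstruction is a single heavy row, not an inequality on the numbers of weight-$3$ rows of each type.

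For $d=3$ and $q=5$ you need a further dichotomy. If every $f_i$ has a $2$ on $Q$, the column constraint puts these $2$'s on three distinct vertices of $Q$ with column sum $0$, whence $3\le\sum_i f_i(Q)\le q-3=2$. Otherwise some $f_i$ has weight at least $9$, and $9+3+3>14\ge n$.

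Likewise, ``parity and small totals'' for $n\in\{7,9,10,11\}$ has to be replaced by concrete arguments. For \cref{lemma: str domatic number when gamma str approx n/2}: of two functions, one has at least $\lceil n/2\rceil$ entries $-1$, hence weight at most $2\lfloor n/2\rfloor-\lceil n/2\rceil<3$. For \cref{prop: str domatic number when str domination number geq n/3}: three functions force three vertices with column sum $0$, so $\sum_i\omega(f_i)\le n-3<9$.

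On the constructive side nothing concrete is given, and your growth step fails if the old rows are kept on the old vertices. When $d=n/3$, \cref{prop: str domination number times str domatic number leq order} makes every old column sum equal to $1$. A single appended row must then be $-1$ on all $n$ old vertices, giving weight at most $6-n<3$. For $3\nmid n$ the total slack is at most $2$, which does not help once $n\ge 8$.

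The paper glues with blocks of zero row and column sums, which need at least two rows. It appends two rows per six new vertices of $Q$, or six rows per twelve new vertices of $P$ and six of $Q$, with parity corrections (\cref{tab: STRD family on K p q plus 6} and \cref{tab: STRD family on K p plus 12 q plus 6}). These steps start from a sizable set of base families computed with BIP1 (\cref{tab: inductive base d str K p q}), and BIP1 also supplies the size-$5$ families for $q=7$.

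For $p\ge 2q+3$, where $d=q$, the equality clause of \cref{prop: str domatic number leq mindegree} forces $f_i(Q)=1$ for all $i$. Then $f_i(N(w))=f_i(P)+1-f_i(w)\ge 1$ for $w\in Q$ gives $f_i(P)\ge 2$ for every $i$: a row with a $2$ on $Q$ needs this directly, and a row without one has no $-1$ on $P$. Hence at least $2q$ columns of $P$ must have column sum exactly $1$, and for even $q$ these cannot consist of entries $\pm1$ only. Your sketch of padding $P$ with $-1,1$ does not address this; the $\one$- and $\onep$-blocks in \cref{tab: STRD family on K p q when p geq 2q} provide exactly these columns.
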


Since the graphs $G$ we consider in this section all have $\gamma_{stR}(G) = 3$, combining \cref{prop: str domatic number leq mindegree,prop: str domination number times str domatic number leq order} gives the upper bound
\begin{equation}
    \label{eq: d str leq minimum complete multipartite graph}
    d_{stR}(G) \leq \min \{\delta(G), \lfloor n(G)/3 \rfloor\}.
\end{equation}
Close inspection of \cref{thm: d str K 2 2 q times 1,thm: d str K p q times 1} shows that, with some exceptions, we claim that equality holds in \eqref{eq: d str leq minimum complete multipartite graph}.
Accordingly, their proofs break down into (a subset of) the following three cases.

In the first case, $d_{stR}(G) = \delta(G)$, and to prove this we only have to show $d_{stR}(G) \geq \delta(G)$, since \eqref{eq: d str leq minimum complete multipartite graph} yields the reverse inequality.
To that end we solve the problem of finding an STRD family $\{f_1, \ldots, f_d\}$ on $G$ of size $d = \delta(G)$ via an explicit construction of a matrix $M = (m_{iv})$ whose entry $m_{iv}$ encodes the function value $f_{i}(v)$ for all $i \in [d]$ and $v \in V(G)$.
In \cref{subsection: matrices} we list a variety of matrices that will act as building blocks for these constructions.

In the second case, $d_{stR}(G) = \lfloor n(G)/3 \rfloor$, and to prove this we only have to show $d_{stR}(G) \geq  \lfloor n(G)/3 \rfloor$, since \eqref{eq: d str leq minimum complete multipartite graph} yields the reverse inequality.
Here, too, we construct a matrix that encodes an STRD family on $G$ of size $\lfloor n(G)/3 \rfloor$, however, by adapting an inductive approach introduced by \textcite{GuoVW25}.
As inductive base this approach requires finding (matrices encoding) maximum size STRD families of numerous graphs of small order.
For this task we designed the binary integer linear program BIP1 presented in \cref{subsection:BIP}.

In the third case, $d_{stR}(G) = \beta$ for some $ \beta < \min \{\delta(G), \lfloor n(G)/3 \rfloor\}$. 
In \cref{subsection: auxiliary results} we present various results that will allow us to show $d_{stR}(G) \leq \beta$ while we can solve BIP1 to find an STRD family on $G$ of size $\beta$ to show the reverse inequality $d_{stR}(G) \geq \beta$.

The proofs of \cref{thm: d str K 3 3 3,thm: d str K 2 2 q times 1,thm: d str K p q times 1} will be presented in \cref{subsection:proofs}.

\subsection{Matrices}
\label{subsection: matrices}

The following matrices were introduced in \cite{GuoVW25}, unless explicitly stated otherwise.

First, \cref{tab: zero p times 2 etc} shows matrices $\zero_{(2s) \times 2}$ for integers $s \geq 1$, $\zero_{3 \times 3}$, and $\zero_{s \times 6}$ for integers $s \geq 2$; the latter are original to the present work.
The sum of each row and each column of these matrices is zero.

\begin{figure}[H]
    \centering
    \small
    \begin{subfigure}{0.45\textwidth}
        \centering
        $
        \mleft[
        \begin{array}{rr}
            1  &  -1    \\
            -1  &  1    \\
        \end{array}
        \mright]
        $
        \caption{$\zero_{2 \times 2}$}
    \end{subfigure}
    \begin{subfigure}{0.45\textwidth}
        \centering
        $
        \mleft[
        \begin{array}{rrr}
            2  &  -1  & -1  \\
            -1  &  2  & -1  \\
            -1  &  -1  & 2  \\
        \end{array}
        \mright]
        $
        \caption{$\zero_{3 \times 3}$}
    \end{subfigure}
    \newline
    \
    \newline
    \begin{subfigure}{0.33\textwidth}
        \centering
        \begin{tabular}{ c | cc}
            \toprule	
                &  $1$ & $2$\\
                \midrule
                $1$& \multicolumn{2}{c}{\multirow{2}{*}{$\zero_{2 \times 2}$}}\\
                $2$& &\\
                \multicolumn{1}{c|}{\multirow{2}{*}{\vdots}}& \multicolumn{2}{c}{\multirow{2}{*}{\vdots}}\\
                & & \\
                $2s-1$& \multicolumn{2}{c}{\multirow{2}{*}{$\zero_{2 \times 2}$}}\\
                $2s$& &\\
            \bottomrule
        \end{tabular}
        \caption{$\zero_{(2s) \times 2}$ for $s \geq 1$}
    \end{subfigure}
    \begin{subfigure}{0.3\textwidth}
        \centering
        \begin{tabular}{ c | cccccc}
            \toprule	
                &  $1$ & $2$ & $3$ & $4$ & $5$ & $6$\\
                \midrule
                $1$& \multicolumn{2}{c}{\multirow{2}{*}{$\zero_{2 \times 2}$}} & \multicolumn{2}{c}{\multirow{2}{*}{$\zero_{2 \times 2}$}} & \multicolumn{2}{c}{\multirow{2}{*}{$\zero_{2 \times 2}$}}\\
                $2$& & && &&\\
                \midrule
                $3$& \multicolumn{6}{c}{\multirow{4}{*}{$\zero_{(s-2) \times 6}$}} \\
                \multicolumn{1}{c|}{\vdots}& & && &&\\
                $s$& & && &&\\
            \bottomrule
        \end{tabular}
        \caption{$\zero_{s \times 6}$ for even $s \geq 2$}
    \end{subfigure}
     \begin{subfigure}{0.33\textwidth}
        \centering
        \begin{tabular}{ c | cccccc}
            \toprule	
                &  $1$ & $2$ & $3$ & $4$ & $5$ & $6$\\
                \midrule
                $1$& \multicolumn{3}{c}{\multirow{3}{*}{$\zero_{3 \times 3}$}} & \multicolumn{3}{c}{\multirow{3}{*}{$\zero_{3 \times 3}$}} \\
                $2$& & && &&\\
                $3$& & && &&\\
                \midrule
                $4$& \multicolumn{6}{c}{\multirow{4}{*}{$\zero_{(s-3) \times 6}$}} \\
                \multicolumn{1}{c|}{\vdots}& & && &&\\
                $s$& & && &&\\
            \bottomrule
        \end{tabular}
        \caption{$\zero_{s \times 6}$ for odd $s \geq 2$}
    \end{subfigure} 
    \caption{}
    \label{tab: zero p times 2 etc}
\end{figure}

Second, for integers $s \geq 1$ we define columns $u_{2s} = (2, (s-1) \times 1, s \times (-1))^T$ and $v_{2s+3} = (2,2, (s-1) \times 1, (s+2) \times (-1))^T$.
The matrix $\one_{(2s) \times (2s)}$ has $u_{2s}$ as first column, and for $j \in [2s-1]$, the $(j+1)$st column is obtained from the $j$th column by rotating down by one unit.
The matrix $\one_{(2s+3) \times (2s+3)}$ is obtained from $v_{2s+3}$ using the same method.
In \cref{tab: one s times s} we list $\one_{2 \times 2}$, $\one_{4 \times 4}$, and $\one_{5 \times 5}$ as examples.
The sum of each row and each column of these matrices is one, and each row contains an entry $2$.

\begin{figure}[H]
    \centering
    \small
    \begin{subfigure}{0.3\textwidth}
        \centering
        $
        \mleft[
        \begin{array}{rr}
            2  &  -1    \\
            -1  &  2    \\
        \end{array}
        \mright]
        $
        \caption{$\one_{2 \times 2}$}
    \end{subfigure}
    \begin{subfigure}{0.3\textwidth}
        \centering
        $
        \mleft[
        \begin{array}{rrrr}
            2  &  -1  & -1  &1 \\
            1  &  2  & -1 & -1 \\
            -1  &  1  & 2  & -1\\
            -1  &  -1  & 1 & 2 \\
        \end{array}
        \mright]
        $
        \caption{$\one_{4 \times 4}$}
    \end{subfigure}
        \begin{subfigure}{0.3\textwidth}
        \centering
        $
        \mleft[
        \begin{array}{rrrrr}
            2  &  -1  & -1  &-1 & 2\\
            2  &  2  & -1 & -1 &-1\\
            -1  &  2  & 2  & -1&-1\\
            -1  &  -1  & 2 & 2 &-1\\
            -1  &  -1  & -1 & 2 &2\\
        \end{array}
        \mright]
        $
        \caption{$\one_{5 \times 5}$}
    \end{subfigure}

    \caption{}
    \label{tab: one s times s}
\end{figure}

Third, \cref{tab: one 2s plus 1 times 2s plus 2} shows matrices $\onep_{5 \times 6}$ and $\onep_{(2s+1) \times (2s+2)}$ for integers $s \geq 3$; the latter are original to the present work.
The sum of each row and each column is one, except for the last column, whose sum is zero, and each row contains an entry $2$.

\begin{figure}[H]
    \centering
    \small
    \begin{subfigure}{0.4\textwidth}
        \centering
        $
        \mleft[
        \begin{array}{rrrrrr}
            2  &  -1  & -1  &1& -1 & 1\\
            -1  &  2  & -1 & 1& 1& -1\\
            2  &  -1  & 1  & -1& 1& -1\\
            -1  &  2  & 1  & -1& 1& -1\\
            -1  &  -1  & 1 & 1& -1& 2\\
        \end{array}
        \mright]
        $
        \caption{$\onep_{5 \times 6}$}
    \end{subfigure}
    \begin{subfigure}{0.58\textwidth}
    \centering
        \begin{tabular}{ c | cccccc | cc}
            \toprule	
                &  $1$ & \multicolumn{4}{c}{$\cdots$} & $2s$ & $2s+1$ & $2s+2$\\
                \midrule
                $1$& \multicolumn{6}{c|}{\multirow{3}{*}{$\onep_{(2s-1) \times (2s)}$}} & \multicolumn{2}{c}{\multirow{2}{*}{$\zero_{(2s-2) \times 2}$}} \\
                $\vdots$ &&&&&& &&\\
                $2s-1$ &&&&&& &$1$&$-1$\\
                \midrule
                $2s$& \multicolumn{4}{c}{\multirow{2}{*}{$\zero_{(2s-2) \times 2}^T$}} & \multicolumn{1}{c}{$1$} & $2$ & $-1$ & $-1$ \\
                $2s+1$& &&& & \multicolumn{1}{c}{$-1$} & $-1$ & $1$ & $2$ \\
            \bottomrule
        \end{tabular}
        \caption{$\onep_{(2s+1) \times (2s+2)}$ for $s \geq 3$}
        \label{tab: onep 2s plus 1}
    \end{subfigure}

    \caption{}
    \label{tab: one 2s plus 1 times 2s plus 2}
\end{figure}

Fourth, for integers $s \geq 5$, we define columns 
$z_{2s} = (-1,2,(s-1) \times 1, (s-1) \times (-1))^T$,
$e_{2s} = (3 \times 2,(s-5) \times 1, (s+1) \times (-1), 1)^T$, and
$o_{2s} = (3 \times 2,(s-4) \times 1, (s+1) \times (-1))^T$.
The matrix $\onep_{(2s) \times (2s+1)}$ has $z_{2s}$ as first column.
If $s$ is even (respectively, odd), then $\onep_{(2s) \times (2s+1)}$ has $e_{2s}$ (respectively, $o_{2s}$) as $(s+1)$st column.
The $(2s+1)$st column of $\onep_{(2s) \times (2s+1)}$ is $(1,-1,\ldots,1,-1)^T$ with alternating entries $1$ and $-1$.
The remaining columns are obtained as follows.
For $j \in [2s-1]- \{s\}$, the $(j+1)$st column is obtained from the $j$th column by rotating down by two units.
The sum of each row and each column is one, except for the last column, whose sum is zero, and each row contains an entry $2$.
In \cref{tab: one 2s times 2s plus 1} we list $\onep_{10 \times 11}$ and $\onep_{12 \times 13}$ as examples.

\begin{figure}[H]
    \centering
    \small
    \begin{subfigure}{0.455\textwidth}
        \centering
        $
        \mleft[
        \begin{array}{rrrrr rrrrr r}
            -1& -1& -1& 1&1&2& -1& -1& -1& 2& 1\\
            2& -1&-1& 1&1&2& -1& -1& -1& 1& -1\\
            1& -1& -1& -1& 1&2&2& -1& -1& -1& 1\\
            1& 2& -1& -1& 1&1&2& -1& -1& -1& -1\\
            1& 1& -1& -1& -1& -1& 2& 2& -1& -1& 1\\
            1&1&2& -1& -1& -1& 1 &2& -1& -1& -1\\
            -1& 1& 1& -1& -1& -1& -1& 2& 2& -1& 1\\
            -1 &1&1&2& -1& -1& -1& 1 &2& -1& -1\\
            -1& -1 &1 &1& -1& -1& -1& -1& 2&2&1\\
            -1& -1& 1&1&2& -1& -1& -1& 1& 2& -1\\
        \end{array}
        \mright]
        $
        \caption{$\onep_{10 \times 11}$}
    \end{subfigure}
    \begin{subfigure}{0.535\textwidth}
        \centering
        $
        \mleft[
        \begin{array}{rrrrr rrrrr rrr}
            -1& -1& -1& 1&1&1&2& -1& -1& -1& -1& 2& 1\\
            2& -1& -1& -1& 1&1&2&1& -1& -1& -1& 1& -1\\
            1& -1& -1& -1& 1&1&2&2& -1& -1& -1& -1& 1\\
            1& 2& -1& -1& -1& 1&1&2&1& -1& -1& -1& -1\\
            1& 1& -1& -1& -1& 1& -1& 2& 2& -1& -1& -1& 1\\
            1&1&2& -1& -1& -1& -1& 1&2&1& -1& -1& -1\\
            1&1&1& -1& -1& -1& -1& -1& 2& 2& -1& -1& 1\\
            -1& 1&1&2& -1& -1& -1& -1& 1&2&1& -1& -1\\
            -1& 1&1&1& -1& -1& -1& -1& -1& 2& 2& -1& 1\\
            -1& -1& 1&1&2& -1& -1& -1& -1& 1&2&1& -1\\
            -1& -1& 1&1&1& -1& -1& -1& -1& -1& 2&2&1\\
            -1& -1& -1& 1&1&2&1& -1& -1& -1& 1& 2& -1\\
        \end{array}
        \mright]
        $
        \caption{$\onep_{12 \times 13}$}
    \end{subfigure}
    \caption{}
    \label{tab: one 2s times 2s plus 1}
\end{figure}

Additionally, the three matrices $\onep_{4 \times 5}$, $\onep_{6 \times 7}$, and $\onep_{8 \times 9}$ are listed in \cref{tab: one 2s times 2s plus 1 contd}. 
The former two are original to the present work.
The sum of each row and each column is one, except for the last column, whose sum is zero. 
Only in $\onep_{8 \times 9}$ each row contains an entry $2$.

\begin{figure}[H]
    \centering
    \small
    \begin{subfigure}{0.25\textwidth}
        \centering
        $
        \mleft[
        \begin{array}{rrrrr}
            2  &  2  & -1  &-1& -1\\
            1  &  1  & -1 & -1& 1\\
            -1  &  -1  & 2  & 2& -1\\
            -1  &  -1  & 1 & 1& 1\\
        \end{array}
        \mright]
        $
        \caption{$\onep_{4 \times 5}$}
    \end{subfigure}
    \begin{subfigure}{0.3\textwidth}
        \centering
        $
        \mleft[
        \begin{array}{rrrrrrr}
            2  &  2  & 1  &-1 & -1 & -1 & -1\\
            1  &  1  & -1 & 1 &-1 &-1 &1\\
            -1  &  -1  & 2  & 2& 1 & -1 & -1\\
            -1  &  -1  & 1 & 1 &-1 & 1 &1\\
            1  &  -1  & -1 & -1 &2 &2 & -1\\
            -1  &  1  & -1 & -1 &1 & 1 &1\\
        \end{array}
        \mright]
        $
        \caption{$\onep_{6 \times 7}$}
    \end{subfigure}
    \begin{subfigure}{0.4\textwidth}
        \centering
        $
        \mleft[
        \begin{array}{rrrrrrrrr}
            2& 2& -1& -1& -1& 1& -1& 1& -1\\
            -1& -1& 2& 2& -1& 1& 1& -1& -1\\
            -1& -1& -1& 1& 1& -1& 2& -1& 2\\
            2& 2& -1& -1& 1& -1& -1& 1& -1\\
            -1& -1& 2& -1& 2& -1& -1& 1& 1\\
            -1& -1& -1& 1& 1& -1& -1& 2& 2\\
            2& 2& -1& -1& -1& 1& 1& -1& -1\\
            -1& -1& 2& 1& -1& 2& 1& -1& -1\\
        \end{array}
        \mright]
        $
        \caption{$\onep_{8 \times 9}$}
    \end{subfigure}

    \caption{}
    \label{tab: one 2s times 2s plus 1 contd}
\end{figure}

\subsection{BIP Model}\label{subsection:BIP}

We describe a binary integer linear program to compute $d_{stR}(G)$ for an isolate-free graph $G$.
The basic idea is to model an STRD family $\mathcal{F}$ on $G$ of maximum size as subset of a family $\{f_1, \ldots, f_{d}\}$ of appropriate functions $f_{i}$ with $d$ sufficiently large.
By \cref{prop: str domatic number leq mindegree,prop: str domination number times str domatic number leq order}, $d = \min\{\delta(G), \lfloor n(G) / \gamma_{stR}(G) \rfloor\}$ suffices (or $d = \delta(G)$ if $\gamma_{stR}(G)$ is unknown).

Let $I = \{1,2,\ldots, d\}$, $V = V(G)$, and $W = \{-1,1,2\}$.
We have three kinds of variables:
First, for each $i \in I$, $v \in V$, and $w \in W$ we have a binary variable $x_{i,v,w}$, where $x_{i,v,w} = 1$ indicates that $f_{i}(v) = w$;
second, for each $i \in I$ we have a binary variable $y_{i}$, where $y_{i} = 1$ indicates that $f_i \in \mathcal{F}$;
and third, for each pair $i,j \in I$ with $i < j$ and each $v \in V$ we have a binary variable $z_{i,j,v}$, where $z_{i,j,v} = 1$ indicates that $f_i(v) \neq f_j(v)$.
The program reads as follows.

\begin{align}
    \nonumber
    \textbf{BIP1} &&& &&\\
    \label{bip: obj}
    \max &&& \sum_{i \in I} y_{i} &&\\
    \label{bip: constr 2}
    \text{s.t.} &&& \sum_{w \in W} x_{i,v,w} = y_{i} &&\forall i \in I, v \in V\\
    \label{bip: constr 3}
    &&& \sum_{u \in N_{G}(v)} \sum_{w \in W} w \cdot x_{i,u,w}  \geq y_{i} && \forall i \in I, v \in V\\
    \label{bip: constr 4}
    &&& \sum_{u \in N_{G}(v)} x_{i,u,2}  \geq x_{i,v,-1} && \forall i \in I, v \in V\\
    \label{bip: constr 5}
    &&& \sum_{i \in I} \sum_{w \in W} w \cdot x_{i,v,w}  \leq 1 && \forall v \in V\\
    \label{bip: constr 6}
    &&& y_{i} + y_{j} - 1  \leq  \sum_{v \in V} z_{i,j,v}&& \forall i,j \in I, i < j\\
    \label{bip: constr 7}
    &&&  x_{i,v,w} + x_{j,v,w}  \leq 2 - z_{i,j,v} && \forall  i,j \in I, i < j, v \in V, w \in W\\
    \label{bip: constr 8}
    &&&  x_{i,v,w}, y_i \in \{0,1\} && \forall i \in I, v \in V, w \in W\\
    \label{bip: constr 10}
    &&&  z_{i,j,v}  \in \{0,1\} && \forall i,j \in I, i < j, v \in V
\end{align}

The objective \eqref{bip: obj} is to maximize the number of $i \in I$ with $y_i = 1$, that is, to maximize the size of $\mathcal{F}$.
When $y_i = 1$, that is, $f_i \in \mathcal{F}$, then constraints \eqref{bip: constr 2}, \eqref{bip: constr 3}, and \eqref{bip: constr 4} ensure that $f_i$ is an STRD function on $G$.
On the other hand, when $y_i = 0$, then \eqref{bip: constr 2} forces $x_{i,v,w} = 0$ for all $v \in V$ and $w \in W$, so in this case \eqref{bip: constr 3} and \eqref{bip: constr 4} are trivially fulfilled.
Next, constraint \eqref{bip: constr 5} ensures that $\sum_{f_i \in \mathcal{F}} f_i(v) \leq 1$ for all $v \in V$.
Thus, for $\mathcal{F}$ to be STRD family on $G$, it remains to ensure that the functions in $\mathcal{F}$ are pairwise distinct.
Let $i,j \in I$ with $i<j$.
If $y_i=y_j=1$, that is, $f_i, f_j \in \mathcal{F}$, then constraints \eqref{bip: constr 6} and \eqref{bip: constr 7} warrant $f_i(v) \neq f_j(v)$ for some $v \in V$.
Otherwise, setting $z_{i,j,v} = 0$ for all $v \in V$ trivially fulfills \eqref{bip: constr 6} and \eqref{bip: constr 7}.
We conclude that BIP1 correctly computes $d_{stR}(G)$.

\subsection{Auxiliary Results}
\label{subsection: auxiliary results}

The next, quite technical result shows that an entry $2$ in each row of $\onep_{4\times 5}$ or $\onep_{6\times 7}$ cannot be achieved (if we require that all entries are from $\{-1,1,2\}$ and that the sum of each row and column is at most one).
For its proof make frequent use of \cref{table: possible triples}, where, for $p \in [7]$ and $w \in \{-1,0,1,2\}$, all triples $(n_{-1}, n_{1}, n_{2})$ of non-negative integers with $n_{-1}+n_1+n_2=p$ and $-n_{-1}+n_1+2n_2=w$ are listed.
We illustrate this with an example:
Suppose that we are given a $(4 \times 5)$-matrix $M$ with entries from $\{-1,1,2\}$ and the entries of the first row of $M$ sum up to $1$.
From the entry for $(p,w) = (5,1)$ of \cref{table: possible triples} we conclude that the first row of $M$ either contains three entries $-1$ and two entries $2$, or it contains two entries $-1$ and three entries $1$. 

\begin{table}[]
    \small
	\centering
	\begin{tabular}{c l | cccc }
		\toprule
         &&\multicolumn{4}{c}{$w$}\\
		 && $-1$ & $0$ & $1$ & $2$ \\
		\midrule
		\multirow{ 7}{*}{$p$} & $1$ & $(1,0,0)$ & - & $(0,1,0)$ & $(0,0,1)$ \\
        &$2$  & - & $(1,1,0)$ & $(1,0,1)$ & $(0,2,0)$ \\
        &$3$  & $(2,1,0)$ & $(2,0,1)$ & $(1,2,0)$& $(1,1,1)$ \\
        &$4$ & $(3,0,1)$ & $(2,2,0)$ & $(2,1,1)$ & $(2,0,2),(1,3,0)$ \\
        &$5$ & $(3,2,0)$ & $(3,1,1)$ & $(3,0,2),(2,3,0)$ & $(2,2,1)$ \\
        &$6$ & $(4,1,1)$ & $(4,0,2),(3,3,0)$ & $(3,2,1)$ & $(3,1,2),(2,4,0)$ \\
        &$7$ & $(5,0,2),(4,3,0)$ & $(4,2,1)$ & $(4,1,2), (3,4,0)$ & $(4,0,3),(3,3,1)$ \\
		\bottomrule
	\end{tabular}
	\caption{All (!) triples $(n_{-1}, n_{1}, n_{2})$ of non-negative integers  with $n_{-1}+n_1+n_2=p$ and $-n_{-1}+n_1+2n_2=w$ for $p \in [7]$ and $w \in \{-1,0,1,2\}$.
    }
    \label{table: possible triples}
\end{table}

\begin{lemma}
    \label{lemma: 4 times 5 and 6 times 7 matrix one prime does not exist}
    For $k \in \{4,6\}$, a $(k \times (k+1))$-matrix $M$ with entries from $\{-1,1,2\}$ cannot simultaneously satisfy
    \begin{enumerate}[\normalfont (i)]
        \item $\vert M \vert^{R}_{i} \geq 1$ for all $i \in [k]$, 
        \item $\vert M \vert^{C}_{j} \leq 1$ for all $j \in [k+1]$, and
        \item each row contains an entry $2$.
    \end{enumerate}
\end{lemma}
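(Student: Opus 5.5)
The plan is a double-counting argument: I count the total entry sum $S$ of $M$ and the number $N_2$ of entries equal to $2$, once along rows and once along columns, reading the admissible compositions of each row and column off \cref{table: possible triples}.

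\textbf{Step 1: pinning down $S$.} Summing (i) over the $k$ rows gives $S \geq k$. Summing (ii) over the $k+1$ columns gives $S \leq k+1$. Hence either all row sums equal $1$, or exactly one row sum equals $2$ and the others equal $1$. Dually, the column sums (each at most $1$) must total $k$ or $k+1$. So either all columns sum to $1$, or exactly one column sums to $0$ and the others sum to $1$; in the latter case $S=k$.

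\textbf{Step 2: lower bound on $N_2$ from the rows.} Rows have length $k+1 \in \{5,7\}$, and by (iii) I only keep triples with $n_2 \geq 1$.
\begin{itemize}
\item For $k=4$, a row with sum $1$ must be $(3,0,2)$, so it has two $2$s; a row with sum $2$ is $(2,2,1)$, so it has one $2$. Hence $N_2 \geq 8$ if $S=4$, and $N_2 \geq 7$ if $S=5$.
\item For $k=6$, a row with sum $1$ must be $(4,1,2)$, so it has two $2$s; a row with sum $2$ is $(4,0,3)$ or $(3,3,1)$, so it has at least one $2$. Hence $N_2 \geq 12$ if $S=6$, and $N_2 \geq 11$ if $S=7$.
\end{itemize}

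\textbf{Step 3: upper bound on $N_2$ from the columns.} Columns have length $k$.
\begin{itemize}
\item For $k=4$, a column with two $2$s already has sum at least $4-2=2>1$. So every column has at most one $2$, and $N_2 \leq 5 < 7$, which is a contradiction.
\item For $k=6$, a column with three $2$s has sum at least $6-3=3>1$. A column with exactly two $2$s has sum at least $4-4=0$, and sum exactly $0$ only in the composition $(4,0,2)$. A column with sum $1$ is $(3,2,1)$ and so has exactly one $2$.
\end{itemize}
By Step 1, at most one column has sum $0$, and this happens only when $S=6$. Therefore $N_2 \leq 6+2=8$ if $S=6$ and $N_2 \leq 7$ if $S=7$. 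Both bounds contradict Step 2.

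There is no real obstacle once Step 1 forces the row and column sums to be essentially exact. The only care needed is to make sure that the table entries used are complete; this is exactly what \cref{table: possible triples} guarantees for $p \leq 7$.
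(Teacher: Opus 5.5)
Your proof is correct and follows essentially the same route as the paper: pin down the row and column sums via the total entry sum, then double count one particular entry value using \cref{table: possible triples}. The only difference is that for $k=4$ the paper counts entries equal to $1$ (at most two from the rows, at least five from the columns), while you count entries equal to $2$ (at least seven versus at most five); this makes your two cases uniform and lets your $k=4$ column bound rest on (ii) alone.
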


\begin{proof}
    Suppose, for the sake of contradiction, that such a matrix $M$ satisfies (i), (ii), and (iii).
    From $\sum_{i=1}^{k} \vert M \vert^{R}_{i} = \sum_{j=1}^{k+1} \vert M \vert^{C}_{j}$, (i), and (ii) we derive
    \begin{enumerate}[\normalfont (a)]
        \item $\vert M \vert^{R}_{i} = 2$ for at most one $i \in [k]$ and $\vert M \vert^{R}_{i} = 1$ for all other $i \in [k]$, and
        \item $\vert M \vert^{C}_{j} = 0$ for at most one $j \in [k+1]$ and $\vert M \vert^{C}_{j} = 1$ for all other $j \in [k+1]$.
    \end{enumerate}

    First, assume $k = 4$.
    On the one hand, from (a), (iii), and the entries for $(p,w) = (5,1), (5,2)$ of \cref{table: possible triples} we conclude that at most one row of $M$ contains two entries $1$, and all other rows each contain no entry $1$.
    Thus, $M$ contains at most two entries $1$.
    On the other hand, from (b) and the entries for $(p,w) = (4,0), (4,1)$ of \cref{table: possible triples} we conclude that at most one column of $M$ contains two entries $1$, and all other columns each contain one entry $1$.
    Thus, $M$ contains at least five entries $1$, a contradiction.

    Next, assume $k = 6$.
    On the one hand, from (a), (iii), and the entries for $(p,w) = (7,1), (7,2)$ of \cref{table: possible triples} we conclude that at most one row of $M$ contains either one or three entries $2$, and all other rows each contain two entries $2$.
    Thus, $M$ contains at least eleven entries $2$.
    On the other hand, from (b) and the entries for $(p,w) = (6,0), (6,1)$ of \cref{table: possible triples} we conclude that at most one column of $M$ contains two entries $2$, and all other columns each contain one entry $2$.
    Thus, $M$ contains at most eight entries $2$, a contradiction.
\end{proof}

The two next results marginally improve the bound $d_{stR}(G) \leq \lfloor n/\gamma_{stR}(G)\rfloor$ from \cref{prop: str domination number times str domatic number leq order} for some graphs $G$.

\begin{proposition}
    \label{prop: str domatic number when str domination number geq n/3}
    If $G$ is a graph of order $n$ with $\delta(G) \geq 1$ and $\gamma_{stR}(G) \geq \lfloor n/3 \rfloor$, then $d_{stR}(G) \leq 2$.
\end{proposition}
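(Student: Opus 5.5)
The plan is to argue by contradiction. Suppose $d_{stR}(G) \geq 3$ and take three distinct members $f_1, f_2, f_3$ of a maximum STRD family. Any subfamily of an STRD family is again an STRD family, so $\{f_1,f_2,f_3\}$ is one. Write $n = 3k + t$ with $t \in \{0,1,2\}$, so that $\gamma_{stR}(G) \geq \lfloor n/3 \rfloor = k$. Since $d_{stR}(G) \geq 3$, \cref{prop: str domatic number leq mindegree} gives $\delta(G) \geq 3$, hence $n \geq 4$ and $k \geq 1$.

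Define $F(v) = f_1(v)+f_2(v)+f_3(v) \leq 1$ for each $v \in V(G)$. Then
\[
3k \leq \sum_{i=1}^{3}\omega(f_i) = \sum_{v} F(v) \leq n = 3k+t ,
\]
so the total deficit $\sum_v (1 - F(v))$ is at most $t \leq 2$. The key step is to list the possible multisets $\lmult f_1(v), f_2(v), f_3(v)\rmult$ with entries in $\{-1,1,2\}$ and sum at most $1$:
\begin{itemize}
\item sum $1$: only $\lmult 1,1,-1\rmult$;
\item sum $0$: only $\lmult 2,-1,-1\rmult$;
\item sum $-1$: only $\lmult 1,-1,-1\rmult$;
\item sum $-3$: only $\lmult -1,-1,-1\rmult$.
\end{itemize}
A sum of $\geq 2$ is excluded, and any multiset containing a $2$ together with anything other than two entries $-1$ has sum $\geq 2$. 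Consequently only vertices with $F(v)=0$ can carry an entry $2$, and each such vertex carries exactly one entry $2$. Each such vertex also contributes deficit $1$, so at most two vertices have $F(v) = 0$. Hence the three functions together contain at most two entries equal to $2$.

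On the other hand, I claim that every $f_i$ takes the value $-1$ somewhere. If some $f_i$ had no entry $-1$, then $\omega(f_i) \geq n$. The other two weights are each at least $\gamma_{stR}(G) \geq k \geq 1$, so the total weight would exceed $n$, a contradiction. (Alternatively, every vertex has $F(v) \leq 1$ and so contains an entry $-1$; but the weight argument is cleaner, since it applies to each $f_i$ individually.) By condition (ii) of an STRD function, each $f_i$ then also takes the value $2$ somewhere. This produces at least three entries $2$ in total, one for each $i$, contradicting the bound of two. Therefore $d_{stR}(G) \leq 2$.

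The main obstacle is the case analysis of the possible value triples. In particular, one must check that $F(v)=1$ forces the absence of a $2$ at $v$. This check is what ties the small slack $t \leq 2$ to the scarcity of entries $2$. I would verify it carefully by enumeration, in the spirit of \cref{table: possible triples}. The remaining steps are bookkeeping: the lower bound $k \geq 1$ (obtained from $\delta(G)\geq 3$) and the fact that subfamilies of STRD families are STRD families.
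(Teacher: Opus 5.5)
The step that fails is ``any subfamily of an STRD family is again an STRD family.'' The condition $\sum_i f_i(v)\le 1$ is not preserved when you delete functions, because a deleted function may have value $-1$ at $v$.

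Here is a concrete instance. On $K_{4,4}$ with parts $\{a_1,\dots,a_4\}$ and $\{b_1,\dots,b_4\}$, let $f_i(a_j)=f_i(b_j)$ be the $(i,j)$ entry of $\one_{4\times 4}$. Every row sums to $1$ and contains a $2$, and every column sums to $1$, so $\{f_1,\dots,f_4\}$ is an STRD family. But $\{f_1,f_2,f_3\}$ is not one, since $f_1(a_1)+f_2(a_1)+f_3(a_1)=2+1-1=2$.

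So from $d_{stR}(G)\ge 3$ you cannot pass to a family of size exactly three. What your argument actually proves is that no STRD family on $G$ has exactly three members. That part is correct, and your deficit count is a tidy alternative to the paper's chain $v_1,v_2,v_3$.

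This gap cannot be closed, because the proposition as stated fails for this very graph. We have $n(K_{4,4})=8$ and $\gamma_{stR}(K_{4,4})=2=\lfloor 8/3\rfloor$ by \cref{prop: str domination number complete bipartite graph}. Yet $d_{stR}(K_{4,4})=4$, by \cref{prop: str domatic number complete bipartite graph} or by the family above. Together with your argument, $K_{4,4}$ has STRD families of sizes $1$, $2$ and $4$, but none of size $3$.

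The paper deals with families of size at least $4$ in a different way. For $n\ge 6$ it first invokes $d_{stR}(G)\le n/\gamma_{stR}(G)\le n/\lfloor n/3\rfloor<4$, and only then excludes exactly three functions. That is the right way around the subfamily problem. However, $n/\lfloor n/3\rfloor<4$ is false precisely at $n=8$, so the paper's proof has the same hole there.

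What survives is the following. If $d_{stR}(G)\ge 3$, then $\delta(G)\ge 3$ and so $n\ge 4$. For $n\in\{4,5\}$, the paper's direct check of $K_4$, $K_5$, $K_{2,1,1,1}$ and $K_{2,2,1}$ applies. For $n\ge 6$ with $n\neq 8$, the bound $n/\lfloor n/3\rfloor<4$ reduces everything to exactly three functions, where your argument works. This proves the statement for all $n\neq 8$, which covers every use of it in the paper (there $n\in\{9,10,11\}$).
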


\begin{proof}
    Let $G$ be a graph of order $n$ with $\delta(G) \geq 1$ and $\gamma_{stR}(G) \geq \lfloor n/3 \rfloor$.

    First assume $n \leq 5$.
    By \cref{prop: str domatic number leq mindegree}, $d_{stR}(G) \leq \delta(G)$, so the desired result follows immediately when $\delta(G) \leq 2$.
    Now consider graphs with $\delta(G) \geq 3$.
    These are precisely all $G \in \{K_4, K_5, K_{2,1,1,1}, K_{2,2,1} \}$;
    for these graphs we have $\gamma_{stR}(G) = 3$ according to \cref{thm: str domination number complete multipartite graphs}, and thus $d_{stR}(G) \leq n/\gamma_{stR}(G) < 2$ as required.

    From now on we can assume $n \geq 6$.
    This assumption yields $d_{stR}(G) \leq n / \gamma_{stR}(G) \leq  n / \lfloor n/3 \rfloor < 4$, so it remains to show that $d_{stR}(G) \neq 3$.
    Suppose, for the sake of contradiction, that $\{f_1, f_2, f_3\}$ is an STRD family on $G$.
    For each $v \in V(G)$, from $\sum_{i=1}^{3} f_i(v) \leq 1$ we derive for the multiset $\lmult f_i(v) \mid i \in [3] \rmult$ that (i) it contains at least one $-1$ and (ii) if it contains a $2$, then the other two elements are $-1$.
    By (i), $f_i(v) = -1$ for some $v \in V(G)$ and $i \in [3]$, say, without loss of generality, for $i = 1$.
    Thus, $f_1(v_1) = 2$ for some $v_1 \in V(G)$ and thus, by (ii), $f_2(v_1) = f_3(v_1) = -1$.
    Thus, $f_2(v_2) = f_3(v_3) = 2$ for some neighbors $v_2$ and $v_3$ of $v_{1}$ and thus, by (ii), $f_1(v_2) = f_3(v_2) = f_1(v_3) = f_2(v_3) = -1$.
    In particular, the function values show that $v_1$, $v_2$, and $v_3$ are pairwise distinct.
    Now, letting $S = \{v_1, v_2, v_3\}$, we have
    \begin{equation*}
        3\cdot \lfloor n/3 \rfloor
        \leq \sum_{i=1}^{3} \omega(f_i)
        = \sum_{v \in S} \sum_{i=1}^{3} f_i(v) + \sum_{v \in V(G) - S} \sum_{i=1}^{3} f_i(v) 
        \leq 0 + (n-3).
    \end{equation*}
    But this yields the contradiction $\lfloor n/3 \rfloor \leq n/3 -1$.
    Hence, $d_{stR}(G) \leq 2$ as required.
\end{proof}

\begin{proposition}
    \label{lemma: str domatic number when gamma str approx n/2}
    If $G$ is a graph of order $n$ with $\delta(G) \geq 1$ and $\gamma_{stR}(G) > 2 \lfloor n/2 \rfloor - \lceil n/2 \rceil$, then $d_{stR}(G) =1$.
\end{proposition}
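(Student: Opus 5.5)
The plan is to assume, for contradiction, that $G$ has an STRD family $\{f_1,f_2\}$ of size $2$. I will then show that the hypothesis on $\gamma_{stR}(G)$ forces the pair $(f_1,f_2)$ into an extremely rigid structure, and finally kill that structure with a congruence argument modulo $3$. Since $\delta(G)\geq 1$ gives $d_{stR}(G)\geq 1$, this yields $d_{stR}(G)=1$. I split according to the parity of $n$. Note that $2\lfloor n/2\rfloor-\lceil n/2\rceil$ equals $n/2$ for even $n$ and $(n-3)/2$ for odd $n$.

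\textbf{Step 1: vertex-wise sums.} For every $v$ we have $f_1(v)+f_2(v)\leq 1$ with values in $\{-1,1,2\}$. So the ordered pair $(f_1(v),f_2(v))$ is one of $(2,-1),(-1,2)$ (sum $1$), $(1,-1),(-1,1)$ (sum $0$), or $(-1,-1)$ (sum $-2$). In particular $2\gamma_{stR}(G)\leq\omega(f_1)+\omega(f_2)\leq n$.

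\textbf{Step 2: even $n$.} Here the hypothesis is $\gamma_{stR}(G)>n/2$, which contradicts $2\gamma_{stR}(G)\leq n$ immediately.

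\textbf{Step 3: odd $n$, the structure.} Now the hypothesis gives $\gamma_{stR}(G)\geq (n-1)/2$, so $\omega(f_1)+\omega(f_2)\geq n-1$. Every vertex contributes at most $1$, and a non-$1$ contribution is at most $0$. Hence at most one vertex $x$ has $f_1(x)+f_2(x)\neq 1$, and then $(f_1(x),f_2(x))\in\{(1,-1),(-1,1)\}$; by symmetry take $(1,-1)$. Every other vertex has pair $(2,-1)$ or $(-1,2)$. Let $A$ and $B$ count these two types.

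\textbf{Step 4: odd $n$, the contradiction modulo $3$.} There are two subcases.
\begin{itemize}
\item If no exceptional vertex exists, then $\omega(f_1)=2A-B$ and $\omega(f_2)=2B-A$. Thus $\omega(f_1)+\omega(f_2)=n$ and $\omega(f_1)-\omega(f_2)=3(A-B)$. Both weights are at least $(n-1)/2$ and they sum to the odd number $n$, so they are $(n-1)/2$ and $(n+1)/2$ in some order. Their difference is then $\pm1$, which is not divisible by $3$.
\item If $x$ exists, then $\omega(f_1)+\omega(f_2)=n-1$. Both weights are at least $(n-1)/2$, so they are equal. But $\omega(f_1)-\omega(f_2)=3(A-B)+2\neq 0$.
\end{itemize}
Either way we reach a contradiction, so no STRD family of size $2$ exists and $d_{stR}(G)=1$.

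The only delicate step is recognising that the weight bound alone does not suffice in the odd case. The key observation there is that $(2,-1)$ and $(-1,2)$ pairs change $\omega(f_1)-\omega(f_2)$ by multiples of $3$. I do not expect any use of the domination conditions beyond $\omega(f_i)\geq\gamma_{stR}(G)$.
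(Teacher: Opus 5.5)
Your argument that no STRD family of size exactly $2$ exists is correct. The even case follows from $2\gamma_{stR}(G)\le\omega(f_1)+\omega(f_2)\le n$. In the odd case, your classification of the pairs $(f_1(v),f_2(v))$ and the congruence of $\omega(f_1)-\omega(f_2)$ modulo $3$ give a contradiction in both subcases. The paper reaches the same point more cheaply. Since every vertex has $f_1(v)=-1$ or $f_2(v)=-1$, one of the two functions takes the value $-1$ on at least $\lceil n/2\rceil$ vertices. Its weight is then at most $2\lfloor n/2\rfloor-\lceil n/2\rceil<\gamma_{stR}(G)$, so no structural or mod-$3$ analysis is needed.

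The gap is your final inference ``no STRD family of size $2$, hence $d_{stR}(G)=1$''. That step would need every larger STRD family to yield one of size $2$. The defining condition $\sum_i f_i(v)\le 1$ is not inherited by subfamilies: dropping a function with $f_i(v)=-1$ raises the sum at $v$. So you have not excluded $d_{stR}(G)\ge 3$, and this must be done separately, as the paper does before treating size $2$.

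The fix is your own Step~1 summed over a family of arbitrary size $d$. This gives $d\,\gamma_{stR}(G)\le n$, i.e.\ the bound $\gamma_{stR}(G)\,d_{stR}(G)\le n$ of \cref{prop: str domination number times str domatic number leq order}. For even $n$ this yields $d_{stR}(G)<2$ outright, which even makes your Step~2 unnecessary. For odd $n\ge 5$ it yields $d_{stR}(G)\le 2n/(n-1)<3$. For $n=3$ the weight bound only gives $d_{stR}(G)\le 3$, so you also need $d_{stR}(G)\le\delta(G)\le 2$ from \cref{prop: str domatic number leq mindegree}; the paper instead checks $K_2$, $K_3$, $K_{2,1}$ directly. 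With these lines added, your proof is complete.
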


\begin{proof}
    The only graphs of order $n \leq 3$ without isolated vertices are $G \in \{K_2,K_3,K_{2,1}\}$, where $d_{stR}(G) = 1$ by \cref{prop: str domatic number complete graph,prop: str domatic number complete bipartite graph}.
    From now on we assume $n \geq 4$.
    If $n$ is even, then $2 \lfloor n/2 \rfloor - \lceil n/2 \rceil = n/2$, so $d_{stR}(G) \leq n / \gamma_{stR}(G) < 2$ and thus, $d_{stR}(G) = 1$.
    From now on we assume that $n \geq 4$ is odd.
    Due to this assumption, $\gamma_{stR}(G) > 2 \lfloor n/2 \rfloor - \lceil n/2 \rceil$ yields $\gamma_{stR}(G) \geq (n-1)/2$.
    Thus, $d_{stR}(G) \leq n / \gamma_{stR}(G) \leq 2n/(n-1) < 3$,
    and thus, $d_{stR}(G) \leq 2$.
    Suppose, for the sake of contradiction, that $d_{stR}(G) = 2$ and let $\{f_1, f_2\}$ be an STRD family on $G$.
    For each $v \in V(G)$, from $f_1(v) + f_2(v) \leq 1$ we conclude $f_1(v) = -1$ or $f_2(v) = -1$.
    Thus, without loss of generality, we can assume $f_1(v) = -1$ for $\lceil n/2 \rceil$ vertices $v \in V(G)$.
    Since $f_1(v) \leq 2$ for the remaining $\lfloor n/2 \rfloor$ vertices $v \in V(G)$, we conclude $\gamma_{stR}(G) \leq \omega(f_1) \leq 2 \lfloor n/2 \rfloor - \lceil n/2 \rceil$, a contradiction.
    Hence, $d_{stR}(G) = 1$ as required.
\end{proof}

\subsection{Proofs of \cref{thm: d str K 3 3 3,thm: d str K 2 2 q times 1,thm: d str K p q times 1}}
\label{subsection:proofs}

For the reader's convenience we restate the theorems before giving their proofs.

\FirstTheoremDSTR*
\begin{proof}
    By \cref{prop: str domatic number when str domination number geq n/3}, $d_{stR}(K_{3,3,3}) \leq 2$.
    To the end of showing $d_{stR}(K_{3,3,3}) \neq 2$ suppose, for the sake of contradiction, that $\{f_1, f_2\}$ is an STRD family on $K_{3,3,3}$.
    For each $v \in V(K_{3,3,3})$, from $f_1(v) + f_2(v) \leq 1$ we conclude $f_1(v) = -1$ or $f_2(v) = -1$.
    Thus, without loss of generality, we can assume $f_1(v) = -1$ for (at least) five $v \in V(K_{3,3,3})$.
    These five vertices are divided among the three partite sets $P_1$, $P_2$, and $P_3$ of $K_{3,3,3}$, and thus, without loss of generality, we can assume $f_1(v) = -1$ for at least four $v \in P_1 \cup P_2$.
    But this leads to the contradiction that $f_1(N(v)) = f_1(P_1 \cup P_2) \leq 0$ for every $v \in P_3$.
\end{proof}

\SecondTheoremDSTR*
\begin{proof}
    We only treat $K_{p_1, p_2, q \times 1}$ with $(p_1, p_2) = (2,2)$ and odd $q \geq 1$; proofs for all other cases work analogously.
    Let $P_1 = \{v_1, v_2\}$ and $P_2 = \{v_3, v_4\}$ denote the two partite sets of size $2$ and let $Q = \{w_1, \ldots, w_q\}$ denote the union of the $q$ partite sets of size $1$.
    Note that $N(v) = P_2 \cup Q$ for $v \in P_1$, that $N(v) = P_1 \cup Q$ for $v \in P_2$, and that $N(w) = (P_1 \cup P_2 \cup Q)- \{w\}$ for $w \in Q$.
    
    \case{1}{$q \in \{1,3\}$.}
    
    By \cref{lemma: str domatic number when gamma str approx n/2}, $d_{stR}(K_{2,2,q \times 1}) = 1$ as required.
    
    \case{2}{$q \in \{5,7\}$.}

    By \cref{prop: str domatic number when str domination number geq n/3}, $d_{stR}(G) \leq 2$.
    It is readily verified that $\{f_1, f_2\}$, where $f_1$ (respectively, $f_2$) alternatingly assigns values $-1$ and $2$ (respectively, $2$ and $-1$) to the vertices $v_1, v_2, v_3, v_4, w_1, \ldots, w_q$, is an STRD family on $K_{2,2,q \times 1}$.
    Hence, $d_{stR}(K_{2,2,q \times 1}) = 2$ as required.

    \case{3}{$q$ is odd and $q \geq 9$.}

    \Cref{eq: d str leq minimum complete multipartite graph} reads as $d_{stR}(K_{2,2,q \times 1}) \leq \min\{q+2, \lfloor (q+4)/3 \rfloor\} = \lfloor (q+4)/3 \rfloor$.
    To demonstrate that this upper bound is attained, we inductively construct a matrix $\mathbb{M}(2,2,q)$ encoding an STRD family on $K_{2,2,q \times 1}$ of size $\lfloor (q+4)/3 \rfloor$.
    For $q \in \{9,11,13\}$ we obtained $\mathbb{M}(2,2,q)$ by solving BIP1; this establishes the inductive base.
    For the inductive step, \cref{tab: induction STRD family on K 2 2 q} shows how to construct $\mathbb{M}(2,2,q+6)$ from $\mathbb{M}(2,2,q)$.
    Hence, for all odd $q \geq 9$, $d_{stR}(K_{2,2,q \times 1}) = \lfloor (q+4)/3 \rfloor$ as required.
\end{proof}

\begin{figure}[]
        \centering
        \small
        \begin{tabular}{ c | cc  cc  cccccc | cccccc}
            \toprule	
                & $v_1$ &  \multicolumn{1}{c|}{$v_2$} & $v_3$ & \multicolumn{1}{c|}{$v_4$}  & $w_1$ & \multicolumn{4}{c}{$\cdots$} & $w_q$ & $w_{q+1}$ & \multicolumn{4}{c}{$\cdots$} & $w_{q+6}$\\
                \midrule
                $f_1$ & \multicolumn{10}{c|}{\multirow{3}{*}{$\mathbb{M}(2,2,q)$}} & \multicolumn{6}{c}{\multirow{3}{*}{$\zero_{d \times 6}$}}\\
                $\vdots$ &&&&& &&&&& &&&&& &\\
                $f_d$ &&&&& &&&&& &&&&& &\\
                \midrule
                $f_{d+1}$ & \multicolumn{7}{c}{\multirow{2}{*}{$\zero_{(q+1) \times 2}^T$}} & $1$ & $1$ & $1$ & $-1$ & $-1$ & \multicolumn{2}{c}{\multirow{2}{*}{$\one_{2 \times 2}$}} & \multicolumn{2}{c}{\multirow{2}{*}{$\one_{2 \times 2}$}} \\
                $f_{d+2}$ &&&&& &&&$-1$&$-1$&$-1$ &$2$&$2$&&& &\\
            \bottomrule
        \end{tabular}
        \caption{An STRD family $\{f_1, \ldots, f_{d+2}\}$ on $K_{2,2,(q+6) \times 1}$ with odd $q \geq 9$ and $d = \lfloor (q+4)/3 \rfloor$.
        The entries form the matrix $\mathbb{M}(2,2,q+6)$.}
        \label{tab: induction STRD family on K 2 2 q}
    \end{figure}

\ThirdTheoremDSTR*

\begin{proof}
    For $K_{p, q \times 1}$, let $P=\{v_1, \ldots, v_p\}$ denote the partite set of size $p$ and let $Q = \{w_1, \ldots, w_q\}$ denote the union of the $q$ partite sets of size $1$.
    Note that $N(v) = Q$ for all $v \in P$ and that $N(w) = (P \cup Q) - \{w\}$ for all $w \in Q$.
    The proof breaks down into a case distinction over $p$ and $q$.

    \case{1}{$p+q \leq 5$ or $p+q = 7$.}
    
    By \cref{lemma: str domatic number when gamma str approx n/2}, $d_{stR}(K_{p,q \times 1}) = 1$ as required.
    
    \case{2}{$q = 3$.}

    Since $d(v) = q = 3$ for every $v \in P$, \cref{thm: d str vertex of degree 3} yields $d_{stR}(G) = 1$ as required.

    \case{3}{$q \neq 3$ and $p+q \in \{9,10,11\}$.}

    We claim that $d_{stR}(K_{p,q \times 1}) = 2$.
    By \cref{prop: str domatic number when str domination number geq n/3}, $d_{stR}(K_{p,q \times 1}) \leq 2$.
    It is readily verified that $\{f_1, f_2\}$, where $f_1$ (respectively, $f_2$) alternatingly assigns values $-1$ and $2$ (respectively, $2$ and $-1$) to the vertices $v_1, \ldots, v_p, w_1, \ldots, w_q$, is an STRD family on $K_{p,q \times 1}$.
    Hence, $d_{stR}(K_{p,q \times 1}) = 2$ as required.

    \case{4}{$(p,q) \in \{(7,5), (8,5), (9,5)\}$.}

    We claim that $d_{stR}(K_{p,q \times 1}) = 2$.
    Since $d_{stR}(K_{p,q \times 1}) \leq \lfloor (p+q)/3 \rfloor\ = 4$, it suffices to show $d_{stR}(K_{p,q \times 1}) \neq 3,4$ and to construct an STRD family on $K_{p,q \times 1}$ of size $2$.

    To the end of showing $d_{stR}(K_{p,q \times 1}) \neq 3$ suppose, for the sake of contradiction, that $\{f_1, f_2, f_3\}$ is an STRD family on $K_{p,q \times 1}$.
    We distinguish two cases.

    Suppose first that for each $i \in [3]$, $f_i(w) = 2$ for some $w \in Q$, say $f_1(w_{k_1}) = f_2(w_{k_2}) = f_3(w_{k_3}) = 2$ for $k_1, k_2, k_3 \in [q]$.
    Since $\sum_{i=1}^{3} f_i(w_k) \leq 1$ for each $k \in \{k_1, k_2, k_3\}$, $f_1(w_{k_2}) =f_1(w_{k_3}) =f_2(w_{k_1}) =f_2(w_{k_3}) =f_3(w_{k_1}) =f_3(w_{k_2}) = -1$; in particular, $k_1$, $k_2$, and $k_3$ are pairwise distinct.
    Therefore, with $S = \{k_1, k_2, k_3\}$, we obtain the contradiction
    \begin{equation*}
        3 
        \leq \sum_{i=1}^{3} f_i(N(v_1))
        = \sum_{k \in S} \sum_{i=1}^{3} f_i(w_k) + \sum_{k \in [q] - S} \sum_{i=1}^{3} f_i(w_k)
        \leq 0 + (q-3) = 2.
    \end{equation*}

    Suppose next that for some $i \in [3]$, $f_i(w) \neq 2$ for all $w \in Q$, say, without loss of generality, for $i=1$.
    We show that $\omega(f_1) \geq 9$.
    If $f_1(v) \neq 2$ for all $v \in P$, then $f_1(x) \neq 2$ for all $x \in P \cup Q$, hence $f_1(x) = 1$ for all $x \in P \cup Q$, hence $\omega(f_1) =  p+q \geq 9$.
    Otherwise, $f_1(v) = 2$ for some $v \in P$, and from the assumption $f_1(w) \neq 2$ for all $w \in Q$ we derive $f_1(v) \neq -1$ for all $v \in P$, so $f_1(P) \geq p+1$.
    Since $f_1(Q) = f_1(N(v_1)) \geq 1$, we have $\omega(f_1) = f_1(P) + f_1(Q) \geq p+2 \geq 9$.
    Now, using $\omega(f_1) \geq 9$ we obtain the contradiction
        \begin{equation*}
        15
        = 9 + 2\gamma_{stR}(K_{p,q \times 1})
        \leq \sum_{i=1}^{3} \omega(f_i)
        = \sum_{x \in P \cup Q} \sum_{i=1}^{3} f_i(x)
        \leq p+q \leq 14.
    \end{equation*}
    
    Since all possibilities lead to a contradiction, $d_{stR}(K_{p,q \times 1}) \neq 3$.

    To the end of showing $d_{stR}(K_{p,q \times 1}) \neq 4$ suppose, for the sake of contradiction, that $\{f_1, f_2, f_3, f_4\}$ is an STRD family on $K_{p,q \times 1}$.
    Consider the $(4 \times 5)$-matrix $M=(f_i(w))_{i \in [4], w \in Q}$.
    Since $\vert M \vert_{i}^{R} = f_i(N(v_1)) \geq 1$ for all $i \in [4]$ and $\vert M \vert_{w}^{C} = \sum_{i=1}^{4} f_i(w) \leq 1$ for all $w \in Q$, \cref{lemma: 4 times 5 and 6 times 7 matrix one prime does not exist} yields that some row of $M$ contains no entry $2$, say, without loss of generality, $f_1(w) \neq 2$ for all $w \in Q$.
    Thus, $f_1(v) \neq -1$ for all $v \in P$, so $f_1(P) \geq p$.
    Since $f_1(Q) = f_1(N(v_1)) \geq 1$ we have $\omega(f_1) = f_1(P) + f_1 (Q) \geq p+1 \geq 8$.
    This leads to the contradiction
        \begin{equation*}
        17
        = 8 + 3\gamma_{stR}(K_{p,q \times 1})
        \leq \sum_{i=1}^{4} \omega(f_i)
        = \sum_{x \in P \cup Q} \sum_{i=1}^{4} f_i(x)
        \leq  p+q \leq 14.
    \end{equation*}
    Thus, $d_{stR}(K_{p,q \times 1}) \neq 4$.

    Finally, it is readily verified that $\{f_1, f_2\}$, where $f_1$ (respectively, $f_2$) alternatingly assigns values $-1$ and $2$ (respectively, $2$ and $-1$) to the vertices $v_1, \ldots, v_p, w_1, 
    \ldots, w_q$, is an STRD family on $K_{p,q \times 1}$.
    Hence, $d_{stR}(K_{p,q \times 1}) = 2$ as required.

    \case{5}{$(p,q) \in \{(11,7), (12,7), (13,7)\}$.}

    We claim that $d_{stR}(K_{p,q \times 1}) = 5$.
	Since $d_{stR}(K_{p,q \times 1}) \leq \lfloor (p+q)/3 \rfloor\ = 6$, it suffices to show $d_{stR}(K_{p,q \times 1}) \neq 6$ and to construct an STRD family on $K_{p,q \times 1}$ of size $5$.

    To the end of showing $d_{stR}(K_{p,q \times 1}) \neq 6$ suppose, for the sake of contradiction, that $\{f_1, \ldots, f_6\}$ is an STRD family on $K_{p,q \times 1}$.
    Proceeding like we did in Case 4, we consider the $(6 \times 7)$-matrix $M=(f_i(w))_{i \in [6], w \in Q}$ and use \cref{lemma: 4 times 5 and 6 times 7 matrix one prime does not exist} to conclude that some row of $M$ contains no entry $2$, say, without loss of generality, $f_1(w) \neq 2$ for all $w \in Q$.
    Then, similar to Case 4, we conclude $\omega(f_1) \geq p+1 \geq 12$.
    But this leads to the contradiction
    \begin{equation*}
        27
        = 12 + 5\gamma_{stR}(K_{p,q \times 1})
        \leq \sum_{i=1}^{6} \omega(f_i)
        = \sum_{x \in P \cup Q} \sum_{i=1}^{6} f_i(x) 
        \leq  p+q \leq 20.
    \end{equation*}
    Thus, $d_{stR}(K_{p,q \times 1}) \neq 6$.

    For each pair $(p,q) \in \{(11,7), (12,7), (13,7)\}$ we solved BIP1 for $K_{p,q \times 1}$ to obtain an STRD family on $K_{p,q \times 1}$ of size $5$.
    Hence, $d_{stR}(G) = 5$ as required.\\

    In the final two cases we cover the pairs $(p,q)$ where $d_{stR}(K_{p,q \times 1}) = \min \{q, \lfloor (p+q)/3 \rfloor\} \geq 2$.
    Note that $q < \lfloor (p+q)/3 \rfloor$ if and only if $p \geq 2q+3$.

    \case{6}{$p \geq 2q + 3$ and Cases 1 to 5 do not apply.}

    We claim that $d_{stR}(K_{p,q \times 1}) = q$.
    Among the pairs $(p,q)$ with $p \geq 2q + 3$ and $p,q \geq 2$ those excluded because one of Cases 1 to 5 applies are precisely the pairs with $q = 3$.
    For the remaining pairs $(p,q)$, \cref{tab: STRD family on K p q when p geq 2q} (on p.~\pageref{tab: STRD family on K p q when p geq 2q}) shows an STRD family on $K_{p,q \times 1}$ of size $q$.
    Hence, $d_{stR}(K_{p,q\times 1}) = q$ as required.

    \case{7}{$p \leq 2q + 2$ and Cases 1 to 5 do not apply.}
    
    We claim that $d_{stR}(K_{p,q \times 1}) = \lfloor (p+q)/3 \rfloor$.
    Among the pairs $(p,q)$ with $p \leq 2q + 2$ and $p,q \geq 2$, those excluded because one of Cases 1 to 5 applies are indicated by a $\heartsuit$ in \cref{tab: inductive base d str K p q} (on p.~\pageref{tab: inductive base d str K p q}).
	For the remaining pairs $(p,q)$ we inductively construct a matrix $\mathbb{M}(p,q) = [\mathbb{P}(p,q) \vert \mathbb{Q}(p,q)]$ encoding an STRD family on $K_{p,q\times 1}$ of size $\lfloor (p+q)/3 \rfloor$, where $\mathbb{P}(p,q)$ encodes the function values on $P$ and $\mathbb{Q}(p,q)$ encodes the function values on $Q$.
	For all pairs $(p,q)$ indicated by a $\clubsuit$ in \cref{tab: inductive base d str K p q} we obtained $\mathbb{M}(p,q)$ by solving BIP1 for $K_{p,q \times 1}$; this establishes the inductive base.
	As inductive step, \cref{tab: STRD family on K p q plus 6,tab: STRD family on K p plus 12 q plus 6} (on pp.~\pageref{tab: STRD family on K p plus 12 q plus 6} and \pageref{tab: STRD family on K p q plus 6}) show how to obtain $\mathbb{M}(p,q+6)$ and $\mathbb{M}(p+12,q+6)$ from $\mathbb{M}(p,q)$.
    Note that the following pairs $(p,q)$ with $p \leq 2q + 2$ and $p,q \geq 2$ are covered via the inductive step: Pairs indicated by a $\diamondsuit$ in \cref{tab: inductive base d str K p q} and pairs with $p \geq 26$ or $q \geq 18$.
    \smallskip

	We covered all possible cases, so the proof of the theorem is complete.
\end{proof}

    \begin{figure}[h]
    \centering
    \small
    \begin{subfigure}{\textwidth}
        \centering
        \begin{tabular}{ c | cccc | ccccc | ccc}
            \toprule
            & $v_1$ & \multicolumn{2}{c}{$\cdots$} &$v_{2q}$ & $v_{2q+1}$ & \multicolumn{3}{c}{$\cdots$} &$v_{p}$ &$w_1$ & $\cdots$ & $w_{q}$	\\	
            \midrule
            $f_1$& \multicolumn{2}{c}{\multirow{3}{*}{$\one_{q \times q}$}} & \multicolumn{2}{c|}{\multirow{3}{*}{$\one_{q \times q}$}} &  \multicolumn{2}{c}{\multirow{3}{*}{$\zero_{q \times 2}$}} &  \multicolumn{1}{c}{\multirow{3}{*}{$\cdots$}}&  \multicolumn{2}{c|}{\multirow{3}{*}{$\zero_{q \times 2}$}} & \multicolumn{3}{c}{\multirow{3}{*}{$\one_{q \times q}$}}\\
            $\vdots$& &&&& &&&&& &&\\
            $f_q$& &&&& &&&&& &&\\
            \bottomrule
        \end{tabular}
        \caption{The table for even $p$.}
    \end{subfigure}
    \newline
    \
    \newline
    \begin{subfigure}{\textwidth}
        \centering
        \small
        \begin{tabular}{ c | cccccc | cc}
            \toprule
            & $v_1$ & \multicolumn{4}{c}{$\cdots$} &$v_{p}$ & $w_1$ & $w_{2}$	\\	
            \midrule
            $f_1$& \multicolumn{2}{c}{\multirow{2}{*}{$\one_{2 \times 2}$}} & \multicolumn{1}{c}{\multirow{2}{*}{$\cdots$}} &  \multicolumn{2}{c}{\multirow{2}{*}{$\one_{2 \times 2}$}} & $1$ & \multicolumn{2}{c}{\multirow{2}{*}{$\one_{2 \times 2}$}}\\
            $f_2$& &&&& &$-1$&&\\
            \bottomrule
        \end{tabular}
        \caption{The table for odd $p$ and $q=2$. }
    \end{subfigure}
    \newline
    \
    \newline
    \begin{subfigure}{\textwidth}
        \centering
        \small
        \begin{tabular}{ c | cccc | ccccc | ccc}
            \toprule
            & $v_1$ & \multicolumn{2}{c}{$\cdots$} &$v_{2q+1}$ & $v_{2q+2}$ & \multicolumn{3}{c}{$\cdots$} &$v_{p}$ &$w_1$ & $\cdots$ & $w_{q}$	\\	
            \midrule
            $f_1$& \multicolumn{2}{c}{\multirow{3}{*}{$\one_{q \times q}$}} & \multicolumn{2}{c|}{\multirow{3}{*}{$\onep_{q \times (q+1)}$}} &  \multicolumn{2}{c}{\multirow{3}{*}{$\zero_{q \times 2}$}} &  \multicolumn{1}{c}{\multirow{3}{*}{$\cdots$}}&  \multicolumn{2}{c|}{\multirow{3}{*}{$\zero_{q \times 2}$}} & \multicolumn{3}{c}{\multirow{3}{*}{$\one_{q \times q}$}}\\
            $\vdots$& &&&& &&&&& &&\\
            $f_q$& &&&& &&&&& &&\\
            \bottomrule
        \end{tabular}
        \caption{The table for odd $p$ and $q \neq 2$.}
    \end{subfigure}
    \caption{An STRD family $\{f_1, \ldots, f_{q}\}$ on $K_{p,q \times 1}$  for pairs $(p,q)$ considered in Case 6.}
    \label{tab: STRD family on K p q when p geq 2q}
    \end{figure}

\begin{figure}[h]
    \centering
    \small
    \begin{tabular}{ c | ccc | cccccc | ccc | ccc}
        \toprule	
            & $v_1$ & $\cdots$ & $v_p$ & $v_{p+1}$ & \multicolumn{4}{c}{$\cdots$} & $v_{p+12}$ & $w_1$ & $\cdots$ & $w_q$ & $w_{q+1}$ & \multicolumn{1}{c}{$\cdots$} & $w_{q+6}$\\
            \midrule
            $f_1$ & \multicolumn{3}{c|}{\multirow{3}{*}{$\mathbb{P}(p,q)$}} & \multicolumn{3}{c}{\multirow{3}{*}{$\zero_{d \times 6}$}}  & \multicolumn{3}{c|}{\multirow{3}{*}{$\zero_{d \times 6}$}} & \multicolumn{3}{c|}{\multirow{3}{*}{$\mathbb{Q}(p,q)$}}& \multicolumn{3}{c}{\multirow{3}{*}{$\zero_{d \times 6}$}} \\
            $\vdots$ &&& &&&&&& &&& &&&\\
            $f_d$ &&& &&&&&& &&& &&&\\
            \midrule
            $f_{d+1}$ & \multicolumn{3}{c|}{\multirow{3}{*}{$\zero_{p \times 6}^T$}} & \multicolumn{3}{c}{\multirow{3}{*}{$\one_{6 \times 6}$}} & \multicolumn{3}{c|}{\multirow{3}{*}{$\one_{6 \times 6}$}} & \multicolumn{3}{c|}{\multirow{3}{*}{$\zero_{q \times 6}^T$}} & \multicolumn{3}{c}{\multirow{3}{*}{$\one_{6 \times 6}$}} \\
            $\vdots$ &&& &&&&&& &&& &&&\\
            $f_{d+6}$ &&& &&&&&& &&& &&&\\
        \bottomrule
    \end{tabular}
    \caption{An STRD family $\{f_1, \ldots, f_{d+6}\}$ on $K_{p+12,(q+6) \times 1}$ for pairs $(p,q)$ considered in Case 7 with $d = \lfloor (p+q)/3\rfloor$.
    The $p+12$ left columns form the matrix $\mathbb{P}(p+12,q+6)$ and the $q+6$ right columns form the matrix $\mathbb{Q}(p+12,q+6)$.}
    \label{tab: STRD family on K p plus 12 q plus 6}
\end{figure}

\begin{table}[h]
    \centering
    \small
    \begin{tabular}{ cc | c|c|c|c | c|c|c|c|c | c|c|c|c|c | c|c }
        \toprule	
        &&  \multicolumn{16}{c}{$q$}\\
            && $2$&$3$&$4$&$5$& $6$&$7$&$8$&$9$&$10$& $11$&$12$&$13$&$14$&$15$&$16$&$17$\\
            \cmidrule{1-18}
            \cmidrule{2-18}
            \multicolumn{1}{c}{\multirow[c]{38}{*}{$p$}}&$2$& $\heartsuit$&$\heartsuit$&$\clubsuit$&$\heartsuit$ &$\clubsuit$&$\heartsuit$&$\heartsuit$&$\heartsuit$&$\diamondsuit$&$\clubsuit$&$\diamondsuit$&$\clubsuit$&$\clubsuit$&$\clubsuit$&$\diamondsuit$&$\diamondsuit$   \\
            \cmidrule{2-18}
            &$3$&$\heartsuit$&$\heartsuit$&$\heartsuit$&$\clubsuit$&$\heartsuit$ &$\heartsuit$&$\heartsuit$&$\clubsuit$&$\clubsuit$&$\diamondsuit$&$\clubsuit$&$\clubsuit$&$\clubsuit$&$\diamondsuit$&$\diamondsuit$&$\diamondsuit$  \\
            \cmidrule{2-18}
            &$4$&$\clubsuit$&$\heartsuit$&$\clubsuit$&$\heartsuit$&$\heartsuit$ &$\heartsuit$&$\diamondsuit$&$\clubsuit$&$\diamondsuit$&$\clubsuit$ &$\clubsuit$&$\clubsuit$&$\diamondsuit$&$\diamondsuit$&$\diamondsuit$&$\diamondsuit$ \\
            \cmidrule{2-18}
            &$5$& $\heartsuit$&$\heartsuit$&$\heartsuit$&$\heartsuit$&$\heartsuit$ &$\clubsuit$&$\clubsuit$ &$\clubsuit$&$\clubsuit$& $\clubsuit$&$\clubsuit$&$\diamondsuit$&$\diamondsuit$&$\diamondsuit$&$\diamondsuit$&$\diamondsuit$    \\
            \cmidrule{2-18}

            &$6$& $\clubsuit$&$\heartsuit$&$\heartsuit$&$\heartsuit$& $\clubsuit$&$\clubsuit$&$\diamondsuit$&$\clubsuit$&$\clubsuit$& $\clubsuit$&$\diamondsuit$&$\diamondsuit$&$\diamondsuit$&$\diamondsuit$&$\diamondsuit$&$\diamondsuit$ \\
            \cmidrule{2-18}
            &$7$& &$\heartsuit$&$\heartsuit$&$\heartsuit$& $\clubsuit$&$\clubsuit$&$\clubsuit$&$\clubsuit$&$\clubsuit$& $\clubsuit$&$\diamondsuit$&$\diamondsuit$&$\diamondsuit$&$\diamondsuit$&$\diamondsuit$&$\diamondsuit$ \\
            \cmidrule{2-18}
            &$8$& &$\heartsuit$&$\clubsuit$&$\heartsuit$& $\clubsuit$&$\clubsuit$&$\clubsuit$&$\clubsuit$&$\diamondsuit$&$\clubsuit$&$\diamondsuit$&$\diamondsuit$&$\diamondsuit$&$\diamondsuit$&$\diamondsuit$&$\diamondsuit$  \\
            \cmidrule{2-18}
            &$9$& &&$\clubsuit$&$\heartsuit$& $\clubsuit$&$\clubsuit$&$\clubsuit$&$\clubsuit$&$\diamondsuit$&$\clubsuit$&$\diamondsuit$&$\diamondsuit$&$\diamondsuit$&$\diamondsuit$&$\diamondsuit$&$\diamondsuit$  \\
            \cmidrule{2-18}
            &$10$& &&$\clubsuit$&$\clubsuit$& $\clubsuit$&$\clubsuit$&$\clubsuit$&$\clubsuit$&$\diamondsuit$&$\diamondsuit$&$\diamondsuit$&$\diamondsuit$&$\diamondsuit$&$\diamondsuit$&$\diamondsuit$&$\diamondsuit$  \\

            \cmidrule{2-18}

            &$11$& &&&$\clubsuit$&$\clubsuit$ &$\heartsuit$&$\clubsuit$&$\clubsuit$&$\clubsuit$&$\diamondsuit$&$\diamondsuit$&$\clubsuit$&$\diamondsuit$&$\diamondsuit$&$\diamondsuit$&$\diamondsuit$  \\
            \cmidrule{2-18}
            &$12$& &&&$\clubsuit$&$\clubsuit$ &$\heartsuit$&$\clubsuit$&$\clubsuit$&$\clubsuit$&$\diamondsuit$&$\diamondsuit$&$\clubsuit$&$\diamondsuit$&$\diamondsuit$&$\diamondsuit$&$\diamondsuit$  \\
            \cmidrule{2-18}
            &$13$& &&&&$\clubsuit$ &$\heartsuit$&$\clubsuit$&$\clubsuit$&$\clubsuit$& $\clubsuit$&$\diamondsuit$&$\clubsuit$&$\diamondsuit$&$\diamondsuit$&$\diamondsuit$&$\diamondsuit$ \\
            \cmidrule{2-18}
            &$14$& &&&&$\clubsuit$ &$\clubsuit$&$\clubsuit$&$\clubsuit$&$\diamondsuit$& $\clubsuit$&$\diamondsuit$&$\diamondsuit$&$\diamondsuit$&$\diamondsuit$&$\diamondsuit$&$\diamondsuit$   \\
            \cmidrule{2-18}
            &$15$& &&&& &$\clubsuit$&$\clubsuit$&$\clubsuit$&$\clubsuit$&$\diamondsuit$ &$\clubsuit$&$\diamondsuit$&$\diamondsuit$&$\diamondsuit$&$\diamondsuit$&$\diamondsuit$    \\

            \cmidrule{2-18}

            &$16$& &&&& &$\clubsuit$&$\diamondsuit$ &$\clubsuit$ &$\diamondsuit$ &$\clubsuit$ &$\clubsuit$ &$\diamondsuit$&$\diamondsuit$&$\diamondsuit$&$\diamondsuit$&$\diamondsuit$   \\
            \cmidrule{2-18}
            &$17$& &&&& &&$\clubsuit$&$\clubsuit$&$\clubsuit$& $\clubsuit$&$\clubsuit$&$\diamondsuit$&$\diamondsuit$&$\diamondsuit$&$\diamondsuit$&$\diamondsuit$  \\
            \cmidrule{2-18}
            &$18$& &&&& &&$\diamondsuit$&$\clubsuit$&$\clubsuit$& $\clubsuit$&$\diamondsuit$&$\diamondsuit$&$\diamondsuit$&$\diamondsuit$&$\diamondsuit$&$\diamondsuit$  \\
            \cmidrule{2-18}
            &$19$& &&&& &&&$\clubsuit$&$\clubsuit$& $\clubsuit$&$\diamondsuit$&$\diamondsuit$&$\diamondsuit$&$\diamondsuit$&$\diamondsuit$&$\diamondsuit$\\
            \cmidrule{2-18}
            &$20$& &&&& &&&$\clubsuit$&$\diamondsuit$& $\clubsuit$&$\diamondsuit$&$\diamondsuit$&$\diamondsuit$&$\diamondsuit$&$\diamondsuit$&$\diamondsuit$ \\

            \cmidrule{2-18}

            &$21$& &&&& &&&&$\diamondsuit$& $\clubsuit$&$\diamondsuit$&$\diamondsuit$&$\diamondsuit$&$\diamondsuit$&$\diamondsuit$&$\diamondsuit$  \\
            \cmidrule{2-18}
            &$22$& &&&& &&&&$\diamondsuit$&$\diamondsuit$&$\diamondsuit$&$\diamondsuit$&$\diamondsuit$&$\diamondsuit$&$\diamondsuit$&$\diamondsuit$ \\

            \cmidrule{2-18}
            &$23$& &&&& &&&&&$\diamondsuit$&$\diamondsuit$&$\clubsuit$&$\diamondsuit$&$\diamondsuit$&$\diamondsuit$&$\diamondsuit$   \\
            \cmidrule{2-18}
            &$24$& &&&& &&&&&$\diamondsuit$&$\diamondsuit$&$\clubsuit$ &$\diamondsuit$&$\diamondsuit$&$\diamondsuit$&$\diamondsuit$  \\
            \cmidrule{2-18}
            &$25$& &&&& &&&&&&$\diamondsuit$ &$\clubsuit$&$\diamondsuit$&$\diamondsuit$&$\diamondsuit$&$\diamondsuit$ \\

        \bottomrule
    \end{tabular}
    \caption{
    Overview of the inductive base in Case 7. 
    $\heartsuit$ indicates pairs $(p,q)$ with $p \leq 2q+2$ where one of Cases 1 to 5 applies.
    $\clubsuit$ indicates pairs $(p,q)$ with $p \leq 2q+2$ contained in the inductive base.
    $\diamondsuit$ indicates pairs $(p,q)$ with $p \leq 2q+2$ where $d_{stR}(K_{p,q \times 1}) = \lfloor (p+q)/3 \rfloor$ follows via the inductive step.
    Entries $(p,q)$ with $p \geq 2q+3$ are left blank.
    }
    \label{tab: inductive base d str K p q}
\end{table}

\begin{figure}[]
    \centering
    \small
    \begin{subfigure}{\textwidth}
        \centering
        \begin{tabular}{ c | ccc | ccc | cccccc}
            \toprule
            & $v_1$ & \multicolumn{1}{c}{$\cdots$} &$v_{p}$ & $w_{1}$ & \multicolumn{1}{c}{$\cdots$} &$w_{q}$ &$w_{q+1}$ & \multicolumn{4}{c}{$\cdots$} & $w_{q+6}$	\\	
            \midrule
            $f_1$& \multicolumn{3}{c|}{\multirow{3}{*}{$\mathbb{P}(p,q)$}} & \multicolumn{3}{c|}{\multirow{3}{*}{$\mathbb{Q}(p,q)$}} & \multicolumn{6}{c}{\multirow{3}{*}{$\zero_{d \times 6}$}} \\
            $\vdots$& &&& &&& &&&&&\\
            $f_d$& &&& &&& &&&&&\\
            \midrule
            $f_{d+1}$& \multicolumn{3}{c|}{\multirow{2}{*}{$\zero_{p \times 2}^T$}} &  \multicolumn{3}{c|}{\multirow{2}{*}{$\zero_{q \times 2}^T$}} & \multicolumn{2}{c}{\multirow{2}{*}{$\one_{2 \times 2}$}} & \multicolumn{2}{c}{\multirow{2}{*}{$\one_{2 \times 2}$}} & \multicolumn{2}{c}{\multirow{2}{*}{$\one_{2 \times 2}$}}\\
            $f_{d+2}$& &&& &&& &&&&&\\
            \bottomrule
        \end{tabular}
        \caption{The table for even $p$ and even $q$.}
    \end{subfigure}
    \newline
    \
    \newline
    \begin{subfigure}{\textwidth}
        \centering
        \begin{tabular}{ c | ccc | cccc | cccccc}
            \toprule
            & $v_1$ & \multicolumn{1}{c}{$\cdots$} &$v_{p}$ & $w_{1}$ & \multicolumn{2}{c}{$\cdots$} &$w_{q}$ &$w_{q+1}$ & \multicolumn{4}{c}{$\cdots$} & $w_{q+6}$	\\	
            \midrule
            $f_1$& \multicolumn{3}{c|}{\multirow{3}{*}{$\mathbb{P}(p,q)$}} & \multicolumn{4}{c|}{\multirow{3}{*}{$\mathbb{Q}(p,q)$}} & \multicolumn{6}{c}{\multirow{3}{*}{$\zero_{d \times 6}$}} \\
            $\vdots$& &&& &&&& &&&&&\\
            $f_d$& &&& &&&& &&&&&\\
            \midrule
            $f_{d+1}$& \multicolumn{2}{c}{\multirow{2}{*}{$\zero_{(p-1) \times 2}^T$}} & $-1$ & \multicolumn{2}{c}{\multirow{2}{*}{$\zero_{(q-2) \times 2}^T$}} & $-1$ & $-1$ & \multicolumn{2}{c}{\multirow{2}{*}{$\one_{2 \times 2}$}} & \multicolumn{2}{c}{\multirow{2}{*}{$\one_{2 \times 2}$}} & $2$ & $2$ \\
            $f_{d+2}$& &&$1$& &&$1$&$1$& &&&&$-1$&$-1$\\
            \bottomrule
        \end{tabular}
        \caption{The table for odd $p$ and even $q$.}
    \end{subfigure}
    \newline
    \
    \newline
    \begin{subfigure}{\textwidth}
        \centering
        \begin{tabular}{ c | cccc | ccc | cccccc}
            \toprule
            & $v_1$ & \multicolumn{2}{c}{$\cdots$} &$v_{p}$ & $w_{1}$ & \multicolumn{1}{c}{$\cdots$} &$w_{q}$ &$w_{q+1}$ & \multicolumn{4}{c}{$\cdots$} & $w_{q+6}$	\\	
            \midrule
            $f_1$& \multicolumn{4}{c|}{\multirow{3}{*}{$\mathbb{P}(p,q)$}} & \multicolumn{3}{c|}{\multirow{3}{*}{$\mathbb{Q}(p,q)$}} & \multicolumn{6}{c}{\multirow{3}{*}{$\zero_{d \times 6}$}} \\
            $\vdots$& &&& &&&& &&&&&\\
            $f_d$& &&& &&&& &&&&&\\
            \midrule
            $f_{d+1}$& \multicolumn{2}{c}{\multirow{2}{*}{$\zero_{(p-2) \times 2}^T$}} & $-1$ & $-1$ &\multicolumn{2}{c}{\multirow{2}{*}{$\zero_{(q-1) \times 2}^T$}} & $-1$ & \multicolumn{2}{c}{\multirow{2}{*}{$\one_{2 \times 2}$}} & \multicolumn{2}{c}{\multirow{2}{*}{$\one_{2 \times 2}$}} & $2$ & $2$ \\
            $f_{d+2}$& &&$1$&$1$ &&&$1$& &&&&$-1$&$-1$\\
            \bottomrule
        \end{tabular}
        \caption{The table for even $p$ and odd $q$.}
    \end{subfigure}
    \newline
    \
    \newline
    \begin{subfigure}{\textwidth}
        \centering
        \begin{tabular}{ c | ccc | ccc | cccccc}
            \toprule
            & $v_1$ & \multicolumn{1}{c}{$\cdots$} &$v_{p}$ & $w_{1}$ & \multicolumn{1}{c}{$\cdots$} &$w_{q}$ &$w_{q+1}$ & \multicolumn{4}{c}{$\cdots$} & $w_{q+6}$	\\	
            \midrule
            $f_1$& \multicolumn{3}{c|}{\multirow{3}{*}{$\mathbb{P}(p,q)$}} & \multicolumn{3}{c|}{\multirow{3}{*}{$\mathbb{Q}(p,q)$}} & \multicolumn{6}{c}{\multirow{3}{*}{$\zero_{d \times 6}$}} \\
            $\vdots$& &&& &&& &&&&&\\
            $f_d$& &&& &&& &&&&&\\
            \midrule
            $f_{d+1}$& \multicolumn{2}{c}{\multirow{2}{*}{$\zero_{(p-1) \times 2}^T$}} & $-1$ &\multicolumn{2}{c}{\multirow{2}{*}{$\zero_{(q-1) \times 2}^T$}} & $1$ & \multicolumn{2}{c}{\multirow{2}{*}{$\one_{2 \times 2}$}} & \multicolumn{2}{c}{\multirow{2}{*}{$\one_{2 \times 2}$}} & \multicolumn{2}{c}{\multirow{2}{*}{$\one_{2 \times 2}$}} \\
            $f_{d+2}$& &&$1$& &&$-1$& &&&&&\\
            \bottomrule
        \end{tabular}
        \caption{The table for odd $p$ and odd $q$.}
    \end{subfigure}
    \caption{An STRD family $\{f_1, \ldots, f_{d+2}\}$ on $K_{p,(q+6) \times 1}$ for pairs $(p,q)$ considered in Case 7 with $d = \lfloor (p+q)/3\rfloor$.
    The $p$ left columns form the matrix $\mathbb{P}(p,q+6)$ and the $q+6$ right columns form the matrix $\mathbb{Q}(p,q+6)$.}
    \label{tab: STRD family on K p q plus 6}
\end{figure}

\clearpage
\section{Open Problems}

We end with some open problems. 
In view of \cref{thm:NPcompleteCubicPlanar,thm:NPcompleteCubicBipartite} it appears natural to ask whether \textsc{Open Packing} remains hard for the intersection of cubic planar and cubic bipartite graphs.

\begin{problem}
    Is \textsc{Open Packing} \NP-complete even for cubic planar bipartite graphs?
\end{problem}

With \cref{prop: str domination number complete bipartite graph,thm: str domination number complete multipartite graphs} we now know the value $\gamma_{stR}(G)$ for every complete multipartite graph $G$.
On the other hand, \cref{prop: str domatic number complete graph,prop: str domatic number complete bipartite graph,thm: d str K 3 3 3,thm: d str K 2 2 q times 1,thm: d str K p q times 1} determine $d_{stR}(G)$ only if $G$ is a complete bipartite graph or a complete $r$-partite graph with $r \geq 3$ and $\gamma_{stR}(G) = 3$.
It would be desirable to settle the remaining cases.

\begin{problem}
    Determine $d_{stR}(G)$ for all complete $r$-partite graphs $G$ with $r \geq 3$ and $\gamma_{stR}(G) = 2$.
\end{problem}

\section*{Acknowledgements}

Research of the third author was supported by the RWTH Graduate Support.

\printbibliography
\end{document}